\documentclass[]{amsart}

\usepackage{amsbsy, amscd, amsmath, amssymb, amstext, amsxtra, commath, enumitem, eucal, float, graphicx, hyperref, latexsym, mathrsfs, mathtools, multicol, tikz, tikz-cd}
\usepackage[utf8]{inputenc}
\usepackage[all]{xy}

\usepackage{lineno}

\newtheorem{theorem}{Theorem}[section]
\newtheorem{lemma}[theorem]{Lemma}
\newtheorem{proposition}[theorem]{Proposition}
\newtheorem{corollary}[theorem]{Corollary}
\theoremstyle{definition}

\newtheorem{remark}[theorem]{Remark}

\numberwithin{equation}{section}

\newtheorem{question}[theorem]{Question}

\def\C{\mathop{\mathsf{C}\hspace{0mm}}\nolimits}
\def\CN{\mathop{\mathsf{CN}}\nolimits}
\def\P{\mathop{\mathsf{P}\hspace{0mm}}\nolimits}

\def\UC{\mathop{\mathsf{UC}}\nolimits}
\def\WP{\mathop{\mathsf{WP}}\nolimits}

\def\cf{\mathop{\rm cf}\nolimits}

\def\CL{\mathop{\rm CL}\nolimits}
\def\cof{\mathop{\rm cof}\nolimits}
\def\cov{\mathop{\rm cov}\nolimits}

\def\int{\mathop{\rm int}\nolimits}

\def\F{\mathop{\mathcal{F}\hspace{0mm}}\nolimits}
\def\HH{\mathop{\mathcal{H}\hspace{0mm}}\nolimits}
\def\K{\mathop{\mathcal{K}\hspace{0mm}}\nolimits}

\def\Pot{\mathop{\mathscr{P}\hspace{0mm}}\nolimits}

\newcommand{\vt}[1]{\langle #1\rangle}

\let\mathcal\mathscr

\tikzcdset{scale cd/.style={every label/.append style={scale=#1},
    cells={nodes={scale=#1}}}}
    
    \newcommand{\stickT}{\setbox255=\hbox{\raise1ex\hbox{$\hspace{0.2pt}\,\bullet\,$}}\mathord{\rlap{\hbox to\wd255{\hss\hbox{$|$}\hss}}\box255}}
\newcommand{\stickS}{\setbox255=\hbox{\raise0.6ex\hbox{$\scriptstyle\bullet$}}\mathord{\rlap{\hbox to\wd255{\hss\hbox{$\scriptstyle|$}\hss}}\box255}}

\begin{document}

\subjclass[2020]{54A25, 54B20, 54D10.}

\keywords{calibers, precalibers, weak precalibers, Vietoris hyperspaces, hyperconnected spaces.}

%\linenumbers

\title{Calibers of canonical hyperconnected spaces and their Vietoris hyperspaces}

\author{Alejandro R\'ios Herrej\'on}
\address {A. Ríos Herrej\'{o}n\\
Departamento de Matem\'aticas, Facultad de Ciencias, Universidad Nacional Aut\'onoma de M\'exico, Circuito ext. s/n, Ciudad Universitaria, C.P. 04510,  M\'exico, CDMX}
\email{chanchito@ciencias.unam.mx}

\author[\'A. Tamariz Mascar\'ua]{\'Angel Tamariz Mascar\'ua}
\address{Á. Tamariz Mascar\'ua, Departamento de Matem\'aticas, Facultad de Ciencias, Universidad Nacional Aut\'onoma de M\'exico, Circuito ext. s/n, Ciudad Universitaria, C.P. 04510,  M\'exico, CDMX}
\email{atamariz@unam.mx}
\urladdr{}

\begin{abstract}
  For infinite cardinals $\lambda \leq \kappa$, 
we calculate the calibers of topological spaces $X$ of cardinality $\kappa$ 
whose open sets are the subsets $A$ satisfying $|X \setminus A| < \lambda$. 
We also prove that the set of calibers of 
$X$ coincides with the set of calibers of the Vietoris hyperspace $\mathcal{H}(X)$, where the space $\mathcal{H}(X)$ has as its underlying set a collection of subsets of $X$ that includes all its finite subsets and is contained within the collection of its closed subsets.
\end{abstract}

\maketitle

%\tableofcontents

%%%%%%%%%%%%%%%%%%%%%%%%%%%%%%%%%%%%%%%%%%%%%%%%%%%%%%%%%%%%%%
\section{Introduction}
%%%%%%%%%%%%%%%%%%%%%%%%%%%%%%%%%%%%%%%%%%%%%%%%%%%%%%%%%%%%%%

A cardinal number $\kappa$ is a {\it caliber} of the topological space $(X,\tau)$ if for every family of nonempty open subsets of $X$ indexed by $\kappa$,
$\mathcal{U} = \{U_\alpha \in \tau \setminus \{\emptyset\} : \alpha < \kappa\}$, there exists $J(\mathcal{U}) \subseteq \kappa$ such that $|J(\mathcal{U})| = \kappa$ and
$\bigcap_{\alpha \in J(\mathcal{U})}U_\alpha \neq \emptyset$. We denote the 
collection of calibers of a space $X$ as $\C(X)$.

The extensive collection of results, techniques, and relations between various mathematical notions surrounding the problem of determining the calibers of a topological space, as accomplished up to the 1980s, was compiled in {\it Chain conditions in topology} by Comfort and Negrepontis published by Cambridge University Press in 1982 \cite{comneg1982}. \v{S}anin initiated the study of calibers in his works \cite{sanin1946}, \cite{sanin1946_2}, \cite{sanin1948}, and \cite{sanin1948_2}.
In \cite{sanin1948_2}, \v{S}anin determined the very important relation between the calibers of a product of spaces and those of its factors.

Other authors who have contributed to the development of the theory of calibers include Argyros and Tsarpalias \cite{argtsa1982} (calibers of compact Hausdorff spaces), Arhangel'ski\u{\i} and Tkachuk \cite{arhtka1986} (calibers of $C_p(X)$), Juhász and Shelah \cite{juhshe2003} (calibers and left-separated spaces), Shakhmatov \cite{shakhmatov1986} (precalibers of $\sigma$-compact topological groups), and Tall \cite{tall1977} (first countable spaces with caliber $\aleph_1$).
The authors of the present text have also contributed to this matter; see, for example, \cite{rios2022} (singular precalibers for topological products) and \cite{riotam2023} (a comparison between the definition of caliber given by Engelking in \cite{engelking1989} and the traditional definition used in this article).
Results concerning the calibers of topological groups can be found in the text \cite{arhtka2008} by Arkhangel’skii and Tkachenko, while calibers in spaces of continuous functions of the type $C_p(X)$ have been studied by Kalamidas and Spiliopoulos \cite{kalspi1992} (relation of chain conditions on a space
$X$, with the weights of compact sets in $C_p(X)$), and by Tkachuk \cite{tkachuk1986} (calibers of $C_p(X)$).

\medskip

In this article we determine all the calibers of spaces $X$ of cardinality $\kappa$ with the topology $\tau = \{A \subseteq X : |X \setminus A| < \lambda\}$, where $\lambda$ and $\kappa$ are infinite cardinal numbers and $\lambda \leq \kappa$. 
We denote these spaces as $X(\lambda,\kappa)$ and call them {\it canonical hyperconnected spaces}, since they are the simplest hyperconnected spaces. Furthermore, we obtain interesting relations between the set of calibers of $X(\lambda,\kappa)$ and the set of calibers of the hyperspace $\mathcal{H}(X(\lambda,\kappa))$, where $\mathcal{H}(X(\lambda,\kappa))$ is any hyperspace of subsets of $X(\lambda,\kappa)$ that contains the collection of its finite subsets, is contained within the collection of its closed subsets, and is equipped with the Vietoris topology.

In Section 2, we provide the basic definitions and known results upon which this research relies. In Section 3, we determine the pseudocharacter and the $\pi$-weight of the canonical hyperconnected spaces; these cardinal topological functions play a fundamental role in calculating the calibers of these spaces. We determine the calibers of $X(\lambda,\kappa)$ in Section 4; and in Section 5 we prove several propositions that lead us to our main result (see Theorem~\ref{thm_hiphip_calibres_resumen}):

\smallskip

\noindent {\bf Theorem.} {\it For two infinite cardinals $\lambda \leq \kappa$, 
$\C(X(\lambda,\kappa)) = \C(\mathcal{H}(X(\lambda,\kappa)))$, where $\mathcal{H}(X(\lambda,\kappa))$ is any collection of subsets of $X(\lambda,\kappa)$ that contains all of its finite subsets, is contained in the collection of its closed subsets, and equipped with the Vietoris topology.} 

%%%%%%%%%%%%%%%%%%%%%%%%%%%%%%%%%%%%%%%%%%%%%%%%%%%%%%%%%%%%
\section{Preliminaries}
%%%%%%%%%%%%%%%%%%%%%%%%%%%%%%%%%%%%%%%%%%%%%%%%%%%%%%%%%%%%

The purpose of this section is to specify the terminology and notation used throughout the article. In general, we follow the standard conventions in the literature: for general topology, we refer to \cite{engelking1989}, and for set theory, to \cite{kunen1980}.

The symbol $\omega$ denotes both the first infinite ordinal and the first infinite cardinal. On the other hand, we define $\mathbb{N} := \omega \setminus \{0\}$. Also, $\mathfrak{c}$ denotes the cardinal number $2^\omega$, that is, the cardinality of the continuum. Likewise, for any set $X$, $\mathcal{P}(X)$ denotes the \emph{power set} of $X$.

Let $X$ be a topological space. The symbol $\tau_X$ denotes the topology of $X$, while $\tau_X^+$ denotes the set $\tau_X \setminus \{\emptyset\}$.

We say that an infinite cardinal $\kappa$ is a:

\begin{enumerate}
\item \emph{caliber} for $X$ if, for every $\{U_\alpha : \alpha < \kappa\} \subseteq \tau_X^+$, there exists $J \in [\kappa]^{\kappa}$ such that $\{U_\alpha : \alpha \in J\}$ has a nonempty intersection.
\item \emph{precaliber} for $X$ if, for every $\{U_\alpha : \alpha < \kappa\} \subseteq \tau_X^+$, there exists $J \in [\kappa]^{\kappa}$ such that $\{U_\alpha : \alpha \in J\}$ is a centered family.
\item \emph{weak precaliber} for $X$ if, for every $\{U_\alpha : \alpha < \kappa\} \subseteq \tau_X^+$, there exists $J \in [\kappa]^{\kappa}$ such that $\{U_\alpha : \alpha \in J\}$ is a linked family.
\end{enumerate}

In turn, the symbols $\C(X)$, $\P(X)$, and $\WP(X)$ denote, respectively, the collections of calibers, precalibers, and weak precalibers of the space $X$. Similarly, $\CN$ and $\UC$ denote, respectively, the class of infinite cardinals and the class of cardinals of uncountable cofinality.

Proposition~\ref{prop_basico} collects several results concerning the preceding concepts that are used throughout the article. Items~(\ref{basico_cofinalidad})--(\ref{basico_producto}) are classical results from the theory of calibers (see \cite{comneg1982}), while the proofs of items~(\ref{basico_T1})--(\ref{basico_hiperconexo}) can be found in \cite{riotam2023}.

Recall that a \emph{$\pi$-base} for a topological space $X$ is a family $\mathcal{V}\subseteq\tau_X^+$ such that, for every $U\in\tau_X^+$, there exists $V\in\mathcal{V}$ with $V\subseteq U$. The \emph{$\pi$-weight} of $X$ is the cardinal number
\[
\pi w(X)
:= \min\{|\mathcal{V}| : \mathcal{V} \text{ is a $\pi$-base for } X\}.
\]
Likewise, we say that $X$ is \emph{hyperconnected} if, for any $U,V \in \tau_X^+$, we have $U \cap V \neq \emptyset$.

\begin{proposition}\label{prop_basico}
The following statements hold for every topological space $X$ and every infinite cardinal $\kappa$.

\begin{enumerate}
\item\label{basico_cofinalidad} If $\kappa$ is a caliber for $X$, then $\cf(\kappa)$ is also a caliber for $X$.

\item\label{basico_densidad} If $\cf(\kappa) > d(X)$, then $\kappa$ is a caliber for $X$.

\item\label{basico_pi_peso} If $\kappa > \pi w(X)$ and $X$ has caliber $\cf(\kappa)$, then $\kappa$ is a caliber for $X$.

\item\label{basico_denso} If $D$ is a dense subset of $X$, then $\WP(D) = \WP(X)$, $\P(D) = \P(X)$, and $\C(D) \subseteq \C(X)$.

\item\label{basico_funcion} If a topological space $Y$ is a continuous image of $X$, then $\C(X) \subseteq \C(Y)$.

\item\label{basico_T2} If $X$ is infinite and $T_2$, then $\WP(X) \subseteq \UC$.

\item\label{basico_producto} The property of having caliber $\kappa$ is finitely productive.

\item\label{basico_T1} If $X$ is countably infinite and $T_1$, then $\C(X) = \UC$.

\item\label{basico_cofinito} If $X$ is uncountable and cofinite, then $\C(X)=\P(X)=\WP(X)=\CN$.

\item\label{basico_hiperconexo} If $X$ is hyperconnected, then $\P(X)=\WP(X)=\CN$.

\end{enumerate}

\end{proposition}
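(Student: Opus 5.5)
We prove the ten items one at a time. Most are short direct arguments; the main obstacle is item~(\ref{basico_cofinito}) when $\cf(\kappa)=\omega$, which needs a pigeonhole argument over a cofinal sequence. Throughout we use one device, the \emph{block trick}. Fix a strictly increasing cofinal sequence $\langle\kappa_\xi:\xi<\cf(\kappa)\rangle$ in $\kappa$ and let $\xi(\alpha)$ be the least $\xi$ with $\alpha<\kappa_\xi$. The blocks $\{\alpha:\xi(\alpha)=\xi\}$ each have size $<\kappa$. Hence any $J\in[\kappa]^\kappa$ meets blocks indexed by a cofinal set of size $\cf(\kappa)$, and conversely any cofinal union of blocks has size $\kappa$.

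\textbf{Items (\ref{basico_cofinalidad}), (\ref{basico_densidad}), (\ref{basico_pi_peso}).} For (\ref{basico_cofinalidad}), given $\{U_\xi:\xi<\cf\kappa\}$, set $V_\alpha:=U_{\xi(\alpha)}$ and apply caliber $\kappa$. The resulting $J$ meets cofinally many blocks, giving $\cf(\kappa)$ many $U_\xi$ with a common point.

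For (\ref{basico_densidad}), fix a dense $D$ with $|D|=d(X)$ and pick $d_\alpha\in U_\alpha\cap D$. Since $\cf(\kappa)>|D|$, some $d\in D$ equals $d_\alpha$ for $\kappa$ many $\alpha$.

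For (\ref{basico_pi_peso}), fix a $\pi$-base $\mathcal V$ with $|\mathcal V|<\kappa$, choose $V_\alpha\in\mathcal V$ with $V_\alpha\subseteq U_\alpha$, and look at the fibers $\{\alpha:V_\alpha=V\}$.
\begin{itemize}
\item If some fiber has size $\kappa$, we are done, since all $U_\alpha$ in it contain the nonempty $V$.
\item Otherwise $\kappa$ is a union of fewer than $\kappa$ fibers, each of size $<\kappa$, so the fiber sizes are cofinal in $\kappa$. Choose distinct $V^\xi$, $\xi<\cf(\kappa)$, whose fiber sizes are cofinal. Caliber $\cf(\kappa)$ gives a point lying in $\cf(\kappa)$ many $V^\xi$. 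The union of their fibers has size $\kappa$, and every corresponding $U_\alpha$ contains that point.
\end{itemize}

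\textbf{Items (\ref{basico_denso}), (\ref{basico_funcion}), (\ref{basico_T2}), (\ref{basico_producto}).} For (\ref{basico_denso}), open sets of $D$ are traces $U\cap D$. A finite intersection of open sets of $X$ is nonempty iff it meets $D$, so linkedness and centeredness transfer both ways. A common point in $D$ is also a common point in $X$, which gives the inclusion for calibers.

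For (\ref{basico_funcion}), pull back the $U_\alpha$ along a continuous surjection $f$; a common point $x$ of the preimages gives the common point $f(x)$.

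For (\ref{basico_T2}), an infinite $T_2$ space has an infinite cellular family $\{W_n:n<\omega\}$. If $\cf(\kappa)=\omega$, the block trick with $V_\alpha:=W_{\xi(\alpha)}$ yields a family with no linked subfamily of size $\kappa$.

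For (\ref{basico_producto}), reduce to basic sets $U_\alpha\times V_\alpha$. Caliber $\kappa$ of the first factor gives $J_1\in[\kappa]^\kappa$, and caliber $\kappa$ of the second factor, applied to $\{V_\alpha:\alpha\in J_1\}$, gives $J_2\subseteq J_1$.

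\textbf{Items (\ref{basico_T1}), (\ref{basico_hiperconexo}).} For (\ref{basico_T1}), enumerate $X=\{x_n\}$. The open sets $X\setminus\{x_0,\dots,x_n\}$ have no infinite subfamily with a common point, and the block trick removes every $\kappa$ of countable cofinality. Conversely, $d(X)=\omega$, so item~(\ref{basico_densidad}) gives every $\kappa\in\UC$.

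For (\ref{basico_hiperconexo}), in a hyperconnected space $U\cap V$ is again nonempty open, so by induction every family in $\tau_X^+$ is centered. Thus $J=\kappa$ works for precalibers and weak precalibers.

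\textbf{Item (\ref{basico_cofinito}).} Write $U_\alpha=X\setminus F_\alpha$ with $F_\alpha$ finite, and let $S_y:=\{\alpha:y\notin F_\alpha\}$. We need some $y$ with $|S_y|=\kappa$; by item~(\ref{basico_hiperconexo}) only calibers remain.
\begin{itemize}
\item If $\cf(\kappa)>\omega$: take a countably infinite $Z\subseteq X$. Each $\alpha$ lies in all but finitely many $S_z$, $z\in Z$. So $\kappa=\bigcup_{z\in Z}S_z$, and some $S_z$ has size $\kappa$.
\item If $\cf(\kappa)=\omega$: suppose every $|S_y|<\kappa$, and use uncountability. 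Fix $Y\subseteq X$ of size $\omega_1$ and a cofinal sequence $\kappa_n$. For each $y\in Y$ pick $n(y)$ with $|S_y|<\kappa_{n(y)}$. Uncountably many $y$ share one value $n$; take $y_0,y_1,\dots$ among them. Every $\alpha$ lies in some $S_{y_k}$, since $F_\alpha$ is finite, so $\kappa\le\omega\cdot\kappa_n<\kappa$, a contradiction.
\end{itemize}
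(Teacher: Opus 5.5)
The paper does not prove this proposition. It quotes items~(1)--(7) from Comfort--Negrepontis and items~(8)--(10) from the authors' earlier paper, so your self-contained arguments have to stand on their own. Nearly all of them do, but one step fails.

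The second bullet of item~(9) is meant to cover every $\kappa$ with $\cf(\kappa)=\omega$, and this includes $\kappa=\omega$. That case is needed, because the claim is $\C(X)=\CN$ and $\omega\in\CN$. For $\kappa=\omega$ the numbers $\kappa_n$ are finite. Knowing only that each $\alpha$ lies in \emph{some} $S_{y_k}$ then gives just $\kappa\le\omega\cdot\kappa_n=\omega$. Your concluding inequality $\kappa\le\omega\cdot\kappa_n<\kappa$ reduces to the false statement $\omega<\omega$, and no contradiction is reached. Either of two one-line repairs works:
\begin{itemize}
\item For $\kappa=\omega$, and indeed for every $\kappa<|X|$, the set $\bigcup_{\alpha<\kappa}F_\alpha$ has size at most $\kappa<|X|$. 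So some $y$ avoids every $F_\alpha$, and $S_y=\kappa$.
\item Use the stronger fact that each $\alpha$ lies in all but finitely many $S_{y_k}$. Then any $\kappa_n$ indices lie in a common $S_{y_k}$, contradicting $|S_{y_k}|<\kappa_n$.
\end{itemize}
For uncountable $\kappa$ of countable cofinality, your pigeonhole argument is correct as written.

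There are two smaller points.
\begin{itemize}
\item In item~(3), choosing the $V^\xi$ so that their fiber sizes are merely cofinal does not justify your conclusion. You need the fibers of the $\cf(\kappa)$ many $V^\xi$ containing the common point to have union of size $\kappa$, but caliber $\cf(\kappa)$ could return exactly the $V^\xi$ with small fibers. Instead, fix a strictly increasing sequence of cardinals $\kappa_\xi$ cofinal in $\kappa$, and pick $V^\xi$ whose fiber has size at least $\kappa_\xi$. Any $I\in[\cf(\kappa)]^{\cf(\kappa)}$ is unbounded in the regular cardinal $\cf(\kappa)$. Hence the union of the fibers over $I$ has size at least $\sup_{\xi\in I}\kappa_\xi=\kappa$, and distinctness of the $V^\xi$ is not even needed.
\item The ``conversely'' clause of your block trick is false in general. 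If the sequence is continuous at a limit $\xi$, the $\xi$-th block is empty. So when $\cf(\kappa)>\omega$, a union of cofinally many blocks can be empty. You never use this clause, so simply drop it.
\end{itemize}
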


From now on, we work under the axioms of ${\sf ZFC}$, that is, the Zermelo--Fraenkel axioms together with the Axiom of Choice. Any additional axiom required in a statement or proof will be explicitly indicated.

The Greek letters $\kappa$, $\lambda$, $\mu$, $\nu$, and $\theta$ denote cardinal numbers. Similarly, the symbols $\alpha$, $\beta$, $\gamma$, $\delta$, $\xi$, and $\eta$ denote ordinal numbers. Likewise, the symbols $\kappa^+$ and $\cf(\kappa)$ denote the successor cardinal of $\kappa$ and the cofinality of $\kappa$, respectively.

Throughout this text, for a given set $X$, the symbols $[X]^{\leq \kappa}$, $[X]^{<\kappa}$, and $[X]^{\kappa}$ denote, respectively, the subcollections of $\mathcal{P}(X)$ consisting of the subsets of $X$ of cardinality at most $\kappa$, the subsets of $X$ of cardinality strictly less than $\kappa$, and the subsets of $X$ of cardinality exactly $\kappa$.

The central theme of this work is the class of spaces described below. Suppose that the cardinals $\lambda$ and $\kappa$ satisfy $\omega \leq \lambda \leq \kappa$. If $X$ is a set of cardinality $\kappa$, then the collection
\[
\tau := \{A \subseteq X : |X \setminus A| < \lambda\} \cup \{\emptyset\}
= \{X \setminus A : A \in [X]^{<\lambda}\} \cup \{\emptyset\}
\]
defines a $T_1$, crowded, homogeneous, and hyperconnected topology on $X$. %To verify the last property, observe that if $A$ and $B$ are nonempty elements of $\tau$ and $A \cap B = \emptyset$, then the equality $X = (X \setminus A) \cup (X \setminus B)$ implies that $|X| < \lambda \leq \kappa$, which is impossible.

In the context of the preceding paragraph, the space $X$ is denoted by $X(\lambda,\kappa)$, and the topology described above by $\tau(\lambda,\kappa)$. Moreover, throughout this article, any space of this type is called a \emph{canonical hyperconnected space}. When $\lambda$ coincides with $\kappa$, we write simply $X(\kappa)$ and $\tau(\kappa)$ instead of $X(\lambda,\kappa)$ and $\tau(\lambda,\kappa)$, respectively.

We conclude the section with one final comment. In the subsequent results, in order to simplify the notation and facilitate the exposition, statements are formulated in terms of the spaces $X(\lambda,\kappa)$, as appropriate. In the proofs, however, we refer only to the underlying set $X$, in order to avoid cumbersome notation resulting from the constant repetition of the symbol $X(\lambda,\kappa)$.

%%%%%%%%%%%%%%%%%%%%%%%%%%%%%%%%%%%%%%%%%%%%%%%%%%%%%%%%%%%%%%
\section{Topological Cardinal Functions}\label{secc_FCT}
%%%%%%%%%%%%%%%%%%%%%%%%%%%%%%%%%%%%%%%%%%%%%%%%%%%%%%%%%%%%%%

In this section, we carry out a detailed study of the cardinal functions of the spaces $X(\lambda,\kappa)$, with the aim of using these calculations later to determine their chain conditions, with particular emphasis on obtaining their calibers.

The first cardinal functions that we analyze in the context of canonical hyperconnected spaces are density and hereditary density:

\begin{proposition}\label{prop_FCT_densidad}
We have
\[
\textstyle d(X(\lambda,\kappa)) = \lambda = hd(X(\lambda,\kappa)).
\]

\end{proposition}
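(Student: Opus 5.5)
Since $d(X)\le hd(X)$ always holds, the plan is to prove two inequalities: $\lambda \le d(X(\lambda,\kappa))$ and $hd(X(\lambda,\kappa)) \le \lambda$.

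\emph{Lower bound $d(X)\geq\lambda$.} First I will describe the closed sets: they are $X$ itself together with the subsets of $X$ of cardinality $<\lambda$. Hence the closure of any $D\subseteq X$ with $|D|<\lambda$ is $D$ itself, a proper subset because $|X|=\kappa\geq\lambda$. So no set of size $<\lambda$ is dense. Conversely, any $D\in[X]^{\lambda}$ meets every nonempty open set $X\setminus A$ with $|A|<\lambda$, so it is dense. This gives $d(X)=\lambda$ exactly, though only $d(X)\ge\lambda$ is needed.

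\emph{Upper bound $hd(X)\le\lambda$.} Take an arbitrary $Y\subseteq X$. If $|Y|\le\lambda$, then $Y$ is trivially dense in itself with at most $\lambda$ points. If $|Y|>\lambda$, choose any $D\in[Y]^{\lambda}$. A nonempty relatively open subset of $Y$ has the form $Y\setminus A$ with $|A|<\lambda$. It meets $D$, since otherwise $D\subseteq A$, which is impossible. So $d(Y)\le\lambda$ for every subspace, and $hd(X)\le\lambda$.

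Combining, $\lambda\le d(X)\le hd(X)\le\lambda$. The argument is essentially routine. The only point needing care is the subspace topology: one must note that the relatively open sets of $Y$ are exactly the sets $Y\setminus A$ with $A\in[X]^{<\lambda}$ (and $\emptyset$), so the same cardinality argument applies inside $Y$.
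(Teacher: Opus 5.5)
Your proposal is correct and follows essentially the same argument as the paper: sets of size $\lambda$ are dense and sets of size $<\lambda$ are not, and every subspace $Y$ carries the relative topology $\{Y\setminus A : A\in[X]^{<\lambda}\}\cup\{\emptyset\}$. The only difference is that the paper computes $d(Y)$ exactly for each subspace ($|Y|$ when $|Y|<\lambda$, and $\lambda$ otherwise), whereas you bound $d(Y)\le\lambda$ and get the lower bound from $d(X)\le hd(X)$, which suffices.
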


\begin{proof} On the one hand, every subset of $X$ of cardinality $\lambda$ is dense in $X$, which implies that $d(X) \leq \lambda$. On the other hand, if $A \in [X]^{<\lambda}$, then $X\setminus A$ is a nonempty open subset of $X$ that does not intersect $A$; in particular, $A$ is not dense in $X$. Thus, $d(X)=\lambda$.

For hereditary density, let $Y$ be a subset of $X$. If $|Y|<\lambda$, then $Y$ inherits the discrete topology and, consequently, $d(Y)=|Y|$. On the other hand, if $|Y|\geq\lambda$, then $Y$ inherits the topology $\tau(\lambda,|Y|)$, and hence its density is $\lambda$.
\end{proof}

The next step is to analyze certain local cardinal functions. To avoid terminological ambiguities, we briefly summarize the terminology we use below.

Let $X$ be a topological space and let $x\in X$. A \emph{local $\pi$-base} at $x$ in $X$ is a collection $\mathcal{V}\subseteq\tau_X^+$ such that, for every $U\in\tau_X$ with $x\in U$, there exists $V\in\mathcal{V}$ such that $V\subseteq U$. If, in addition, $x\in\bigcap\mathcal{V}$, we say that $\mathcal{V}$ is a \emph{local base} at $x$ in $X$. On the other hand, when $X$ is a $T_1$ space, a \emph{pseudobase} at $x$ in $X$ is a collection $\mathcal{V}\subseteq\tau_X$ such that $\bigcap\mathcal{V}=\{x\}$.

The \emph{$\pi$-character} of $x$ in $X$, the \emph{character} of $x$ in $X$, and the \emph{pseudocharacter} of $x$ in $X$ are, respectively, the cardinal numbers
\begin{align*}
\pi\chi(x,X)
&:= \min\{|\mathcal{V}| : \mathcal{V} \text{ is a local $\pi$-base at } x \text{ in } X\}, \\
\chi(x,X)
&:= \min\{|\mathcal{V}| : \mathcal{V} \text{ is a local base at } x \text{ in } X\} \hspace{0.5em} \text{and} \\
\psi(x,X)
&:= \min\{|\mathcal{V}| : \mathcal{V} \text{ is a pseudobase at } x \text{ in } X\}.
\end{align*}

Likewise, the \emph{$\pi$-character}, the \emph{character}, and the \emph{pseudocharacter} of $X$ are, respectively, the cardinals
\begin{align*}
\pi\chi(X)
&:= \sup\{\pi\chi(x,X) : x\in X\}, \\
\chi(X)
&:= \sup\{\chi(x,X) : x\in X\} \hspace{0.5em} \text{and} \\
\psi(X)
&:= \sup\{\psi(x,X) : x\in X\}.
\end{align*}

\begin{proposition}\label{prop_FCT_pi_caracter} We have
\[
\pi\chi(x,X(\lambda,\kappa)) = \pi\chi(X(\lambda,\kappa)) = \pi w(X(\lambda,\kappa))
\]
for every $x \in X(\lambda,\kappa)$.

\end{proposition}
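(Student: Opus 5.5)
The plan is to prove the chain
\[
\pi w(X) \leq \pi\chi(x,X) \leq \pi\chi(X) \leq \pi w(X)
\]
for an arbitrary $x \in X$. The inequalities $\pi\chi(x,X) \leq \pi\chi(X) \leq \pi w(X)$ are general. The first holds because $\pi\chi(X)$ is a supremum over points. The second holds because any $\pi$-base $\mathcal{V}$ of $X$ is a local $\pi$-base at every point: for an open $U \ni x$ we have $U \in \tau_X^+$, so some $V\in\mathcal{V}$ satisfies $V \subseteq U$. Everything therefore reduces to the one nontrivial inequality $\pi w(X) \leq \pi\chi(x,X)$.

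For that inequality I will show that every local $\pi$-base $\mathcal{V}$ at $x$ in $X$ is already a $\pi$-base for $X$. Let $U \in \tau_X^+$, so $U = X \setminus A$ with $A \in [X]^{<\lambda}$. If $x \in U$, then $\mathcal{V}$ gives some $V\in\mathcal{V}$ with $V \subseteq U$ directly. If $x \notin U$, fix any $y \in U$, which exists since $|A|<\lambda\le\kappa=|X|$. Consider the bijection $\sigma$ of $X$ that swaps $x$ and $y$. Since $\sigma$ preserves cardinalities of subsets, it is a homeomorphism of $X(\lambda,\kappa)$; this is the homogeneity noted in Section~2. Then $\sigma^{-1}(U)$ is an open set containing $x$, so there is $V\in\mathcal{V}$ with $V\subseteq \sigma^{-1}(U)$, i.e. $\sigma(V) \subseteq U$. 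This only shows that $\{\sigma(V): V \in \mathcal{V}\}$ works locally at $y$, which is not yet what we want. The swap trick alone therefore does not suffice.

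The better route is to avoid $\sigma$ and use hyperconnectedness together with the explicit form of open sets. Given $U = X\setminus A$ with $x \notin U$, set $U' := U \cup \{x\} = X \setminus (A\setminus\{x\})$. This set is open, contains $x$, and is nonempty. Pick $V \in \mathcal{V}$ with $V \subseteq U'$. Since $X$ is $T_1$ and crowded, $V\setminus\{x\}$ is still a nonempty open set, and it lies in $U$. Replacing each $V\in\mathcal{V}$ by $V\setminus\{x\}$ (nonempty, because every nonempty open set has cardinality $\kappa$) gives a family $\mathcal{V}'$ with $|\mathcal{V}'| \le |\mathcal{V}|$. Every nonempty open set contains a member of $\mathcal{V}'$: when $x\in U$ we apply $\mathcal{V}$ to $U$ itself and then delete $x$. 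Hence $\mathcal{V}'$ is a $\pi$-base, and so $\pi w(X)\le|\mathcal{V}|$. Taking $\mathcal{V}$ of minimal size yields $\pi w(X) \leq \pi\chi(x,X)$, and the chain is complete.

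The main obstacle was the case $x\notin U$, which is handled by the enlargement $U\mapsto U\cup\{x\}$ followed by deleting $x$ again. The only things to check are that finite modifications preserve openness in $\tau(\lambda,\kappa)$ (immediate, since $\lambda$ is infinite) and that nonempty open sets are not reduced to $\{x\}$.
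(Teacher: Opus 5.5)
Your final argument is correct and its key step is the paper's: shrink a minimal local $\pi$-base $\mathcal{V}$ at $x$ to $\{V\setminus\{x\}:V\in\mathcal{V}\}$, using that $U\cup\{x\}$ is open, and your sandwich $\pi w(X)\le\pi\chi(x,X)\le\pi\chi(X)\le\pi w(X)$ even makes the paper's appeal to homogeneity unnecessary. Do delete the abandoned swap paragraph and your opening claim that every local $\pi$-base at $x$ is already a $\pi$-base, which is false (a local base at $x$ has no member inside the open set $X\setminus\{x\}$), and note that hyperconnectedness is never used and the case split on whether $x\in U$ is superfluous, since $U\cup\{x\}$ is open either way.
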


\begin{proof} First, since $X$ is homogeneous, for any $x,y \in X$ we have $\pi\chi(x,X) = \pi\chi(y,X)$. Thus, the collection $\{\pi\chi(z,X) : z \in X\}$ consists of a single element, and consequently $\pi\chi(x,X) = \pi\chi(X)$ for every $x \in X$.

On the other hand, we always have $\pi\chi(X) \leq \pi w(X)$. To establish the reverse inequality, it suffices to observe that if $x \in X$ and $\mathcal{V}$ is a local $\pi$-base at $x$ in $X$ such that $|\mathcal{V}| = \pi\chi(x,X)$, then $\mathcal{W} := \{V \setminus \{x\} : V \in \mathcal{V}\}$ is a $\pi$-base for $X$ of the same cardinality as $\mathcal{V}$. Indeed, if $U \in \tau_X^+$, then $U \cup \{x\}$ is an open subset of $X$ containing $x$, which implies that there exists $V \in \mathcal{V}$ such that $V \subseteq U \cup \{x\}$ and, consequently, $V \setminus \{x\} \subseteq U$.
\end{proof}

\begin{proposition}\label{prop_FCT_caracter} We have
\[
\chi(x,X(\lambda,\kappa)) = \chi(X(\lambda,\kappa)) = \pi w(X(\lambda,\kappa))
\]
for every $x \in X(\lambda,\kappa)$.

\end{proposition}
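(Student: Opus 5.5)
By homogeneity, $\chi(x,X)$ does not depend on $x$, so $\chi(x,X)=\chi(X)$ for every $x\in X$, exactly as in the proof of Proposition~\ref{prop_FCT_pi_caracter}. Since every local base is a local $\pi$-base, we have $\pi\chi(x,X)\le\chi(x,X)$. Proposition~\ref{prop_FCT_pi_caracter} then gives $\pi w(X)\le\chi(x,X)$. It remains to prove $\chi(x,X)\le\pi w(X)$.

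For this inequality I will convert a $\pi$-base into a local base at $x$. Let $\mathcal{W}$ be a $\pi$-base for $X$ with $|\mathcal{W}|=\pi w(X)$, and put
\[
\mathcal{B}:=\{W\cup\{x\} : W\in\mathcal{W}\}.
\]
Each member of $\mathcal{B}$ is open. Indeed, its complement is contained in $X\setminus W$, which has cardinality $<\lambda$, so $W\cup\{x\}$ is open in $\tau(\lambda,\kappa)$. Every member contains $x$, and $|\mathcal{B}|\le|\mathcal{W}|$.

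To see that $\mathcal{B}$ is a local base at $x$, take an open $U$ with $x\in U$. If $U=X$ there is nothing to check, since any element of $\mathcal{B}$ lies inside $X$. Otherwise $U\setminus\{x\}$ is a nonempty open set. It is nonempty because $|U|\ge\kappa\ge\omega$, and it is open because its complement $(X\setminus U)\cup\{x\}$ still has cardinality $<\lambda$, as $\lambda$ is infinite. Hence some $W\in\mathcal{W}$ satisfies $W\subseteq U\setminus\{x\}$, and therefore $W\cup\{x\}\subseteq U$.

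The only point needing care is the openness check just made: removing or adding the single point $x$ preserves openness because $\lambda$ is infinite and $|X|=\kappa\ge\lambda$. Putting the pieces together gives $\chi(x,X)\le\pi w(X)$, and hence equality throughout.
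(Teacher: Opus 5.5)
Your proposal is correct and follows the paper's proof essentially step for step. Both use homogeneity to get $\chi(x,X)=\chi(X)$, combine $\pi\chi\le\chi$ with Proposition~\ref{prop_FCT_pi_caracter} for the lower bound, and obtain a local base at $x$ of size $\pi w(X)$ by taking $\{W\cup\{x\}:W\in\mathcal{W}\}$ for a minimal $\pi$-base $\mathcal{W}$. Your separate treatment of the case $U=X$ is harmless but not needed, since $U\setminus\{x\}$ is a nonempty open set in every case.
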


\begin{proof} Once again, since $X$ is homogeneous, for any $x,y\in X$ we have $\chi(x,X)=\chi(y,X)$. Consequently, the set $\{\chi(z,X): z\in X\}$ consists of a single element, and therefore $\chi(x,X)=\chi(X)$ for every $x\in X$.

To prove the equality $\chi(X)=\pi w(X)$, first recall that the inequality $\pi\chi(X)\leq\chi(X)$ always holds and that, moreover, the equality $\pi w(X)=\pi\chi(X)$ follows from Proposition~\ref{prop_FCT_pi_caracter}. Thus, it suffices to show that $\chi(X)\leq\pi w(X)$.

To this end, observe that if $x\in X$ and $\mathcal{V}$ is a $\pi$-base for $X$ of cardinality $\pi w(X)$, then $\{V\cup\{x\}:V\in\mathcal{V}\}$ is a local base at $x$ in $X$. Indeed, if $U\in\tau_X$ contains $x$, then $U\setminus\{x\}$ is a nonempty open subset of $X$. Since $\mathcal{V}$ is a $\pi$-base for $X$, there exists $V\in\mathcal{V}$ such that $V\subseteq U\setminus\{x\}$. Thus, $V\cup\{x\}\subseteq U$.
\end{proof}

\begin{proposition}\label{prop_FCT_pseudocaracter_1} We have
\[\textstyle
\psi(X(\lambda,\kappa)) = \min \big\{|\mathcal{V}| : \mathcal{V} \subseteq \tau(\lambda,\kappa)^+ \ \wedge \ \bigcap \mathcal{V} = \emptyset\big\}.
\]

\end{proposition}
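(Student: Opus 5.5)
Let $\mu$ denote the right-hand side, i.e. the least cardinality of a family of nonempty open sets with empty intersection. The plan is to prove the two inequalities $\psi(X)\le\mu$ and $\mu\le\psi(X)$ by passing between pseudobases at a point and families with empty intersection. The only tool needed is the operation of adding or removing a single point, exactly as in the proofs of Propositions~\ref{prop_FCT_pi_caracter} and~\ref{prop_FCT_caracter}. Both moves stay inside $\tau$: removing a point from a nonempty open set enlarges its complement by one element, which still has size $<\lambda$ because $\lambda$ is infinite, and the result is nonempty because $|X|=\kappa\ge\lambda$. Adding a point only shrinks the complement. By homogeneity $\psi(x,X)=\psi(X)$ for every $x$, so it suffices to work at a fixed point $x\in X$.

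For $\mu\le\psi(X)$, take a pseudobase $\mathcal{V}$ at $x$ with $|\mathcal{V}|=\psi(x,X)$, so $\bigcap\mathcal{V}=\{x\}$. Put $\mathcal{W}:=\{V\setminus\{x\}:V\in\mathcal{V}\}$. Each member is a nonempty open set by the remark above, and $\bigcap\mathcal{W}=\{x\}\setminus\{x\}=\emptyset$, so $\mu\le|\mathcal{W}|\le|\mathcal{V}|$. One degenerate case needs a separate check: if $\mathcal{V}$ were empty, then $\bigcap\mathcal{V}=X\neq\{x\}$, so $\mathcal{V}$ is nonempty and $\mathcal{W}$ is a genuine family.

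For $\psi(X)\le\mu$, take $\mathcal{U}\subseteq\tau^+$ with $\bigcap\mathcal{U}=\emptyset$ and $|\mathcal{U}|=\mu$. Put $\mathcal{V}:=\{U\cup\{x\}:U\in\mathcal{U}\}$, which consists of open sets containing $x$. Then $\bigcap\mathcal{V}=\big(\bigcap\mathcal{U}\big)\cup\{x\}=\{x\}$, since a point $y\ne x$ lying in every $U\cup\{x\}$ lies in every $U$. So $\mathcal{V}$ is a pseudobase at $x$ and $\psi(x,X)\le\mu$.

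I do not expect a real obstacle here. The only points that need care are confirming that the minimum on the right is taken over a nonempty collection, and that $\mathcal{W}$ consists of nonempty sets. The first holds because the family $\{X\setminus\{y\}:y\in X\}$ has empty intersection. The second uses $\kappa\ge\lambda$ to ensure that removing fewer than $\lambda$ points never empties $X$.
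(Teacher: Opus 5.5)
Your proof is correct and follows the paper's argument: you remove $x$ from each member of a pseudobase at $x$ to get a family in $\tau^+$ with empty intersection, and you adjoin $x$ to each member of such a family to get back a pseudobase. The appeal to homogeneity is harmless but unnecessary, since the same argument gives $\psi(x,X)=\mu$ directly at every point $x$.
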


\begin{proof} Let $\mu := \min \{|\mathcal{V}| : \mathcal{V} \subseteq \tau_X^+ \ \wedge \ \bigcap \mathcal{V} = \emptyset\}$. To prove the desired equality, it suffices to show that $\psi(x,X) = \mu$ for every $x \in X$.

With this in mind, fix an element $x$ of $X$. First, observe that if $\mathcal{V}$ is a pseudobase at $x$ in $X$, then the collection $\mathcal{W} := \{V \setminus \{x\} : V \in \mathcal{V}\}$ consists of elements of $\tau_X^+$, satisfies $\bigcap \mathcal{W} = \emptyset$, and has the same cardinality as $\mathcal{V}$. Conversely, if $\mathcal{V} \subseteq \tau_X^+$ satisfies $\bigcap \mathcal{V} = \emptyset$, then $\mathcal{W} := \{V \cup \{x\} : V \in \mathcal{V}\}$ is a pseudobase at $x$ in $X$ of the same cardinality as $\mathcal{V}$.
\end{proof}

\begin{remark}\label{rmk_FCT_cubierta_cofinalidad} A consequence of Proposition~\ref{prop_FCT_pseudocaracter_1} is that, if $\mathcal{I} := [X]^{<\lambda}$, then $\psi(X(\lambda,\kappa))$ coincides with the covering number of the ideal $\mathcal{I}$; that is,
\[
\textstyle
\psi(X(\lambda,\kappa)) = \cov(\mathcal{I}) = \min \{|\mathcal{A}| : \mathcal{A} \subseteq \mathcal{I} \ \wedge \ \bigcup \mathcal{A} = X\}.
\]

Similarly, $\pi w(X(\lambda,\kappa))$ coincides with the cofinality of the ideal $\mathcal{I}$; that is,
\[
\textstyle
\pi w(X(\lambda,\kappa)) = \cof(\mathcal{I}) = \min \{|\mathcal{A}| : \mathcal{A} \subseteq \mathcal{I} \ \wedge \ \forall B \in \mathcal{I} \ \exists A \in \mathcal{A} (B \subseteq A)\}.
\]

\end{remark}

Remark~\ref{rmk_FCT_cubierta_cofinalidad} allows us to determine the cardinal $\psi(X(\lambda,\kappa))$ precisely in terms of the relation between $\lambda$ and $\kappa$. Indeed, since the computation reduces to determining the covering number of $[X]^{<\lambda}$, and this cardinal is well known (namely, $\kappa$ when $\lambda<\kappa$ and $\cf(\kappa)$ when $\lambda=\kappa$), we obtain the following topological information about the space $X(\lambda,\kappa)$.

\begin{proposition}\label{prop_FCT_pseudocaracter_2} We have
\[
\psi(X(\lambda,\kappa)) = \begin{cases} \kappa, & \text{if} \ \lambda < \kappa, \\
\cf(\kappa), & \text{if} \ \lambda = \kappa.
  \end{cases}
\]

\end{proposition}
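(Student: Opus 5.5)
By Remark~\ref{rmk_FCT_cubierta_cofinalidad}, $\psi(X(\lambda,\kappa))$ equals $\cov([X]^{<\lambda})$, the least number of sets of size $<\lambda$ whose union is $X$. So the plan is to compute this covering number directly, with a lower bound and a matching upper bound in each of the two cases $\lambda<\kappa$ and $\lambda=\kappa$. Throughout, $|X|=\kappa$.

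\emph{Case $\lambda<\kappa$.} Upper bound: writing $X=\bigcup_{x\in X}\{x\}$ covers $X$ by $\kappa$ singletons, and each singleton lies in $[X]^{<\lambda}$ because $\lambda\geq\omega$. Lower bound: let $\mathcal{A}\subseteq[X]^{<\lambda}$ with $|\mathcal{A}|=\mu<\kappa$. Then
\[
\textstyle\big|\bigcup\mathcal{A}\big|\leq \mu\cdot\lambda=\max\{\mu,\lambda\}<\kappa,
\]
so $\mathcal{A}$ cannot cover $X$. (If $\mathcal{A}$ is finite, the union has size $<\lambda<\kappa$ anyway.) Hence the covering number is exactly $\kappa$.

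\emph{Case $\lambda=\kappa$.} Upper bound: fix a cofinal increasing sequence $\langle\kappa_\alpha:\alpha<\cf(\kappa)\rangle$ of cardinals below $\kappa$. If $\kappa$ is regular we may simply take singletons, since then $\cf(\kappa)=\kappa$. Identify $X$ with $\kappa$ and put $A_\alpha:=\kappa_\alpha$, viewed as an initial segment. Each $A_\alpha$ has size $<\kappa$, and the $A_\alpha$ cover $\kappa$, so the covering number is at most $\cf(\kappa)$. Lower bound: suppose $\mathcal{A}\subseteq[X]^{<\kappa}$ has $|\mathcal{A}|=\mu<\cf(\kappa)$. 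The cardinals $|A|$ for $A\in\mathcal{A}$ are fewer than $\cf(\kappa)$ many and each is $<\kappa$, so their supremum $\nu$ satisfies $\nu<\kappa$. Then
\[
\textstyle\big|\bigcup\mathcal{A}\big|\leq\mu\cdot\nu<\kappa,
\]
and $\mathcal{A}$ does not cover $X$. For finite $\mathcal{A}$ the union is a finite union of sets of size $<\kappa$, hence of size $<\kappa$ since $\kappa$ is infinite.

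The only step requiring care is this cofinality argument in the lower bound of the case $\lambda=\kappa$. In particular, one must check that $\mu\cdot\nu<\kappa$ also when $\nu$ is finite or $\mu$ is finite; both situations are covered by the observation that a product of two cardinals below the infinite cardinal $\kappa$ is below $\kappa$. Combining both cases with Proposition~\ref{prop_FCT_pseudocaracter_1} and Remark~\ref{rmk_FCT_cubierta_cofinalidad} yields the stated formula.
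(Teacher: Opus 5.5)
Your proposal is correct and follows the paper's route: reduce $\psi(X(\lambda,\kappa))$ to $\cov([X]^{<\lambda})$ via Remark~\ref{rmk_FCT_cubierta_cofinalidad}, then evaluate this covering number. The paper only cites the values $\kappa$ and $\cf(\kappa)$ as well known, whereas you prove both bounds yourself. You also rightly use singletons in the regular case, since a cofinal sequence of smaller cardinals need not exist when $\kappa$ is a successor.
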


In fact, the perspective of studying the topological space $X(\lambda,\kappa)$ through the invariants associated with the ideal $[X]^{<\lambda}$ proves to be particularly fruitful.

\begin{proposition}\label{prop_FCT_pi_peso_cotas} We have
\[
\kappa \leq \pi w(X(\lambda,\kappa)) = w(X(\lambda,\kappa)) \leq \kappa^{<\lambda}.
\]

\end{proposition}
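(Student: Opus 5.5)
We will establish the three relations $\kappa \leq \pi w(X)$, $\pi w(X) = w(X)$ and $w(X) \leq \kappa^{<\lambda}$ separately, working throughout with the ideal $\mathcal{I} := [X]^{<\lambda}$ and the identification $\pi w(X) = \cof(\mathcal{I})$ from Remark~\ref{rmk_FCT_cubierta_cofinalidad}.

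For the upper bound, we note that the topology $\tau_X$ itself is a base for $X$. By definition it has exactly $|\mathcal{I}|+1$ elements, namely the sets $X\setminus A$ with $A\in\mathcal{I}$, together with $\emptyset$. Since $|\mathcal{I}| = |[X]^{<\lambda}| \leq \kappa^{<\lambda}$, this gives $w(X)\leq \kappa^{<\lambda}$. (The value $|[X]^{<\lambda}|$ is at least $\kappa$ because all singletons belong to $\mathcal{I}$, so no finite-cardinality adjustments are needed.)

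For the equality $\pi w(X) = w(X)$, the inequality $\pi w(X)\leq w(X)$ is standard. For the reverse, we will use Proposition~\ref{prop_FCT_caracter}, which gives local bases of size $\pi w(X)$ at every point. We then need to combine them into a global base of size $\pi w(X)$. We do this directly: let $\mathcal{V}$ be a $\pi$-base with $|\mathcal{V}|=\pi w(X)$. By the proof of Proposition~\ref{prop_FCT_caracter}, for each $x$ the family $\{V\cup\{x\}:V\in\mathcal{V}\}$ is a local base at $x$. Taking the union over all $x\in X$ yields a base of cardinality at most $\kappa\cdot\pi w(X)$. Hence $w(X)\leq \kappa\cdot \pi w(X)$, and it suffices to know that $\kappa\leq \pi w(X)$.

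So the real content is the lower bound $\kappa\leq\pi w(X) = \cof(\mathcal{I})$, and this is the step we expect to be the main (albeit mild) obstacle. Let $\mathcal{A}\subseteq\mathcal{I}$ be cofinal. Every singleton $\{x\}$ lies in $\mathcal{I}$, so $\bigcup\mathcal{A} = X$. Thus $\mathcal{A}$ witnesses $\cov(\mathcal{I})\leq|\mathcal{A}|$, and hence $\cof(\mathcal{I})\geq\cov(\mathcal{I}) = \psi(X)$ by Remark~\ref{rmk_FCT_cubierta_cofinalidad}. By Proposition~\ref{prop_FCT_pseudocaracter_2}, this already gives $\kappa$ when $\lambda<\kappa$.

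When $\lambda=\kappa$, this only yields $\cf(\kappa)$, so we argue differently. Suppose $|\mathcal{A}|<\kappa$. Either every $A\in\mathcal{A}$ has cardinality at most some fixed $\mu<\kappa$, or the cardinalities $|A|$ are cofinal in $\kappa$.

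In the first case, $\kappa$ is a limit cardinal or $\mu^+<\kappa$, or $\kappa = \mu^+$. If $\kappa$ is a successor $\mu^+$, then $\kappa$ is regular and $|\bigcup\mathcal{A}|<\kappa$, contradicting $\bigcup\mathcal{A}=X$. Otherwise, a set of size $\mu^+<\kappa$ lies in $\mathcal{I}$ but is contained in no member of $\mathcal{A}$.

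In the second case, we diagonalize instead. Enumerate $\mathcal{A}=\{A_\xi:\xi<\theta\}$ with $\theta<\kappa$ and choose distinct points $x_\xi\notin A_\xi$; this is possible since each $|A_\xi|<\kappa=|X|$. The set $B=\{x_\xi:\xi<\theta\}$ has size at most $\theta<\lambda$, so $B\in\mathcal{I}$, yet no $A_\xi$ contains $B$. This contradiction applies uniformly in both cases, so we will use this diagonal argument directly as the main proof. It shows $\cof(\mathcal{I})\geq\kappa$ whenever $\theta<\lambda$, and for $\lambda<\kappa$ we fall back on the covering bound.
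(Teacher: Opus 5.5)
Your proposal is correct and is essentially the paper's proof. It uses the same four ingredients: the bound $\cof(\mathcal{I})\geq\cov(\mathcal{I})=\psi(X)=\kappa$ for $\lambda<\kappa$; a diagonal argument for $\lambda=\kappa$, which is just the paper's $\pi w(X)\geq d(X)=\kappa$ written out in terms of the ideal; the union over $x\in X$ of the local bases $\{V\cup\{x\}:V\in\mathcal{V}\}$, i.e.\ $w(X)\leq\chi(X)\cdot|X|$ with $\chi(X)=\pi w(X)$; and $w(X)\leq|\tau_X|=|\mathcal{I}|\leq\kappa^{<\lambda}$, with the preliminary case split for $\lambda=\kappa$ being superfluous since the diagonalization alone suffices.
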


\begin{proof} First, since the cofinality of $\mathcal{I} := [X]^{<\lambda}$ is always greater than or equal to its covering number, Remark~\ref{rmk_FCT_cubierta_cofinalidad} and Proposition~\ref{prop_FCT_pseudocaracter_2} imply that
\[
\pi w(X)=\cof(\mathcal{I}) \geq \cov(\mathcal{I}) = \psi(X) = \kappa,
\]
whenever $\lambda < \kappa$. On the other hand, $\pi w(X)\geq d(X)=\kappa$ when $\lambda=\kappa$ (see Proposition~\ref{prop_FCT_densidad}).

To establish the equality $\pi w(X)=w(X)$, it suffices to apply Proposition~\ref{prop_FCT_caracter}, together with the inequality $\pi w(X)\geq\kappa$ obtained above and the classical relation between $\chi(X)$, $w(X)$, and $|X|$:
\[
\pi w(X)=\chi(X)\leq w(X)\leq \chi(X)\cdot |X|=\pi w(X)\cdot |X|=\pi w(X)\cdot \kappa=\pi w(X).
\]
Consequently, $\pi w(X)=w(X)$.

Finally, the inequality $w(X) \leq \kappa^{<\lambda}$ holds, since
\[
w(X) \leq o(X) = |\tau_X| = |\mathcal{I}| = \kappa^{<\lambda}.
\]
\end{proof}

However, in contrast to the relative simplicity of computing $\psi(X(\lambda,\kappa))$, determining the cardinal $\pi w(X(\lambda,\kappa))$ precisely becomes considerably more difficult when $\lambda > \omega$. In the case $\lambda=\omega$, the cardinal $\pi w(X(\lambda,\kappa))$ coincides with the $\pi$-weight of the cofinite space of cardinality $\kappa$, which is exactly $\kappa$.

Among the known results, it is known that $\cof([\mathfrak{c}]^{<\aleph_1})=\mathfrak{c}$ and that $\cof([\aleph_n]^{<\aleph_1})=\aleph_n$ for every $n<\omega$. On the other hand, if $0^{\#}$ does not exist and $\kappa$ has uncountable cofinality, then $\cof([\kappa]^{<\aleph_1})=\kappa$ (see \cite{barjud1995}).

Likewise, pcf theory is closely related to this problem. For example, in \cite[Theorem~5.12, p.~1198]{abrmag2010}, this tool is used to prove that $\cof([\aleph_\omega]^{<\aleph_1}) < \aleph_{\mathfrak{c}^+}$. However, to the best of the authors' knowledge, the precise value of $\cof([\aleph_\omega]^{<\aleph_1})$ is not known.

Regarding the lower bound for the $\pi$-weight established in Proposition~\ref{prop_FCT_pi_peso_cotas}, it can be slightly improved in certain special cases involving $\lambda$ and $\kappa$. One such case is presented below in Proposition~\ref{prop_FCT_pi_peso_singular}, and another in Proposition~\ref{prop_FCT_cf_kappa_menor_lambda}, which we prove next.

\begin{proposition}\label{prop_FCT_cf_kappa_menor_lambda} If $\cf(\kappa) < \lambda < \kappa$, then $\pi w(X(\lambda,\kappa)) \geq \kappa^+$.

\end{proposition}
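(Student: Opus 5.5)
By Remark~\ref{rmk_FCT_cubierta_cofinalidad}, $\pi w(X(\lambda,\kappa))=\cof(\mathcal{I})$ with $\mathcal{I}:=[X]^{<\lambda}$, and Proposition~\ref{prop_FCT_pi_peso_cotas} gives $\cof(\mathcal{I})\geq\kappa$. So it suffices to show that no cofinal subfamily of $\mathcal{I}$ has cardinality exactly $\kappa$. I would argue by contradiction with a diagonal argument that uses the cofinality of $\kappa$. Suppose $\mathcal{A}=\{A_\alpha:\alpha<\kappa\}\subseteq\mathcal{I}$ is cofinal in $\mathcal{I}$. I will build $B\in\mathcal{I}$ with $B\not\subseteq A_\alpha$ for every $\alpha<\kappa$, which is impossible.

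Fix a strictly increasing sequence of cardinals $\langle\kappa_\xi:\xi<\cf(\kappa)\rangle$ cofinal in $\kappa$. Note that $\kappa$ is singular, since $\cf(\kappa)<\lambda<\kappa$, so such a sequence exists with every $\kappa_\xi<\kappa$. Partition the index set as $\kappa=\bigcup_{\xi<\cf(\kappa)}I_\xi$ with $I_\xi:=\kappa_\xi\setminus\bigcup_{\eta<\xi}\kappa_\eta$, so that $|I_\xi|\leq\kappa_\xi<\kappa$.

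For each $\xi<\cf(\kappa)$, the set $\bigcup_{\alpha\in I_\xi}A_\alpha$ has cardinality at most $|I_\xi|\cdot\lambda$. By enlarging the sequence if necessary so that each $\kappa_\xi\geq\lambda$, this is $\max(\kappa_\xi,\lambda)<\kappa=|X|$. Hence I can pick a point
\[
x_\xi\in X\setminus\bigcup_{\alpha\in I_\xi}A_\alpha .
\]
Put $B:=\{x_\xi:\xi<\cf(\kappa)\}$. Then $|B|\leq\cf(\kappa)<\lambda$, so $B\in\mathcal{I}$. Any $\alpha<\kappa$ lies in some $I_\xi$, and then $x_\xi\in B\setminus A_\alpha$, so $B\not\subseteq A_\alpha$. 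This contradicts cofinality, hence $\pi w(X(\lambda,\kappa))=\cof(\mathcal{I})>\kappa$, that is, $\pi w(X(\lambda,\kappa))\geq\kappa^+$.

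The only delicate point is the bound $\bigl|\bigcup_{\alpha\in I_\xi}A_\alpha\bigr|<\kappa$. It needs both $|I_\xi|<\kappa$ and $\lambda<\kappa$, and this is exactly where the hypothesis $\lambda<\kappa$ enters. The hypothesis $\cf(\kappa)<\lambda$ is what guarantees that the diagonal set $B$ is small enough to belong to $\mathcal{I}$. Everything else is routine bookkeeping.
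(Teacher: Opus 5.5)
Your proof is correct and is essentially the paper's argument: a diagonal choice of one point $x_\xi$ for each of $\cf(\kappa)$ blocks of indices, yielding a set of size at most $\cf(\kappa)<\lambda$ that is contained in no $A_\alpha$. The differences are cosmetic. The paper uses initial segments $\{\alpha:\alpha<\kappa_\xi\}$ and justifies the choice of $x_\xi$ by citing $\psi(X(\lambda,\kappa))=\kappa$ (i.e., $\cov([X]^{<\lambda})=\kappa$). You instead use a partition into blocks and verify directly that $\bigl|\bigcup_{\alpha\in I_\xi}A_\alpha\bigr|\le\max(\kappa_\xi,\lambda)<\kappa$.
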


\begin{proof} Let $\{\kappa_\xi : \xi < \cf(\kappa)\}$ be a strictly increasing sequence of cardinals such that
$\sup\{\kappa_\xi : \xi < \cf(\kappa)\} = \kappa$. In addition, let $\{A_\alpha : \alpha < \kappa\}$ be a subset of $[X]^{<\lambda}$. For each $\xi < \cf(\kappa)$, the fact that $\kappa_\xi < \kappa = \psi(X)$ (see Proposition~\ref{prop_FCT_pseudocaracter_2}) implies that $\{A_\alpha : \alpha < \kappa_\xi\}$ is not a cover of $X$ and, hence, there exists $x_\xi \in X \setminus \bigcup_{\alpha < \kappa_\xi} A_\alpha$. Thus, the set $A := \{x_\xi : \xi < \cf(\kappa)\}$ is an element of $[X]^{<\lambda}$ such that $A \not\subseteq A_\alpha$ for every $\alpha < \kappa$; in particular, $\{A_\alpha : \alpha < \kappa\}$ is not cofinal in $[X]^{<\lambda}$.
\end{proof}

Despite the difficulties associated with the $\pi$-weight of the spaces $X(\lambda,\kappa)$, it is possible to determine this cardinal function precisely when $\lambda = \kappa$ and $\kappa$ is a regular cardinal.

\begin{proposition}\label{prop_FCT_kappa_regular} If $\kappa$ is regular and $\varphi \in \{hd, d, \psi, \chi, \pi\chi, \pi w, w\}$, then
\[
\varphi(X(\kappa)) = \kappa.
\]

\end{proposition}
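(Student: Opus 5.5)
The plan is to bound every function in the list between $\kappa$ and $\kappa$, using the results already proved in this section. Take $\lambda=\kappa$ in Proposition~\ref{prop_FCT_densidad}; this gives $hd(X(\kappa)) = d(X(\kappa)) = \kappa$ at once. Propositions~\ref{prop_FCT_pi_caracter}, \ref{prop_FCT_caracter} and \ref{prop_FCT_pi_peso_cotas} identify $\pi\chi$, $\chi$, $\pi w$ and $w$ with the single cardinal $\pi w(X(\kappa))$, and they also show this cardinal is at least $\kappa$. Since $\kappa$ is regular, Proposition~\ref{prop_FCT_pseudocaracter_2} gives $\psi(X(\kappa)) = \cf(\kappa) = \kappa$. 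So the whole statement reduces to proving $\pi w(X(\kappa)) \leq \kappa$.

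By Remark~\ref{rmk_FCT_cubierta_cofinalidad}, $\pi w(X(\kappa))$ is the cofinality of the ideal $[X]^{<\kappa}$. To bound it, identify $X$ with $\kappa$ and consider the family of initial segments
\[
\mathcal{A} := \{\alpha : \alpha < \kappa\} \subseteq [\kappa]^{<\kappa}.
\]
This family has cardinality $\kappa$. It is cofinal: if $B \in [\kappa]^{<\kappa}$, then regularity of $\kappa$ makes $B$ bounded in $\kappa$. Hence there is $\alpha<\kappa$ with $B \subseteq \alpha$.

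In topological terms, $\{X \setminus \alpha : \alpha < \kappa\}$ is a $\pi$-base (indeed a base, after adding points) of size $\kappa$. This gives $\pi w(X(\kappa)) \leq \kappa$, and combined with the lower bound it yields $\pi w(X(\kappa)) = \kappa$. All seven functions therefore equal $\kappa$.

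There is no real obstacle. The only step that genuinely uses the hypothesis is the cofinality bound, where regularity ensures every small set is bounded. One small check is needed: $\psi \leq \chi$ is consistent with this, since both equal $\kappa$, and nothing further is required.
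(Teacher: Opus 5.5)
Your proposal is correct and is essentially the paper's proof. Both arguments use Propositions~\ref{prop_FCT_densidad}--\ref{prop_FCT_caracter}, \ref{prop_FCT_pseudocaracter_2} and \ref{prop_FCT_pi_peso_cotas} to reduce everything to $\pi w(X(\kappa))\leq\kappa$, and then take the $\kappa$ initial segments of an enumeration of $X$ as a cofinal subfamily of $[X]^{<\kappa}$, with regularity of $\kappa$ bounding each set of size $<\kappa$. (Your parenthetical that $\{X\setminus\alpha:\alpha<\kappa\}$ is a base ``after adding points'' holds only in the sense that $\{(X\setminus\alpha)\cup\{x\}:\alpha<\kappa,\ x\in X\}$ is a base of size $\kappa$, and the argument does not need it.)
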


\begin{proof} First, by Propositions~\ref{prop_FCT_densidad}--\ref{prop_FCT_caracter}, \ref{prop_FCT_pseudocaracter_2}, and \ref{prop_FCT_pi_peso_cotas}, as well as the classical relations among the local cardinal functions, we have
\[
hd(X) = d(X) = \kappa
= \psi(X)
\leq \chi(X)
= \pi\chi(X)
= \pi w(X) = w(X).
\]
Therefore, for our purposes, it suffices to show that the ideal $[X]^{<\kappa}$ has a cofinal subset of cardinality $\kappa$.

Let $\{x_\alpha : \alpha<\kappa\}$ be an enumeration without repetitions of $X$. Moreover, for each $\beta<\kappa$, let $A_\beta := \{x_\alpha : \alpha<\beta\}$. Clearly, $\{A_\beta : \beta<\kappa\}$ is a subset of $[X]^{<\kappa}$. Furthermore, if $B \in [X]^{<\kappa}$ and $J := \{\alpha<\kappa : x_\alpha \in B\}$, then $J$ has cardinality less than $\kappa$, which implies that $J$ is bounded in $\kappa$. Hence, there exists $\beta<\kappa$ such that $J \subseteq \beta$; consequently, $B \subseteq A_\beta$.
\end{proof}

In the case of the $\pi$-weight of the space $X(\kappa)$, when $\kappa$ is a singular cardinal, obtaining a precise computation is difficult in \textsf{ZFC}; nevertheless, it is possible to refine the lower bound established in Proposition~\ref{prop_FCT_pi_peso_cotas}.

\begin{proposition}\label{prop_FCT_pi_peso_singular} If $\kappa$ is singular, then $\pi w(X(\kappa)) \geq \kappa^+$.

\end{proposition}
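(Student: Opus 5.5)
By Remark~\ref{rmk_FCT_cubierta_cofinalidad}, $\pi w(X(\kappa))=\cof([X]^{<\kappa})$. Proposition~\ref{prop_FCT_pi_peso_cotas} already gives $\cof([X]^{<\kappa})\geq\kappa$. So it suffices to show that no family $\{A_\alpha:\alpha<\kappa\}\subseteq[X]^{<\kappa}$ is cofinal. I will do this by a diagonal argument in the spirit of the proof of Proposition~\ref{prop_FCT_cf_kappa_menor_lambda}, now using the singularity of $\kappa$ to build a small set that escapes every $A_\alpha$.

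Let $\mu:=\cf(\kappa)<\kappa$ and fix a strictly increasing sequence of cardinals $\{\kappa_\xi:\xi<\mu\}$ cofinal in $\kappa$ with $\kappa_0>\mu$. Given $\{A_\alpha:\alpha<\kappa\}\subseteq[X]^{<\kappa}$, for each $\xi<\mu$ consider the block $\mathcal{A}_\xi:=\{A_\alpha:\alpha<\kappa_\xi\}$. Two direct estimates are not enough:
\begin{itemize}
\item the union of $\mathcal{A}_\xi$ could have size $\kappa$;
\item a single point $x_\xi$ outside $\bigcup_{\alpha<\kappa_\xi}A_\alpha$ need not exist, since Proposition~\ref{prop_FCT_pseudocaracter_2} gives $\psi(X(\kappa))=\mu$, which is small.
\end{itemize}
Instead, for each $\xi$ I will choose a set $B_\xi\subseteq X$ with $|B_\xi|\le\kappa_\xi^+<\kappa$ such that $B_\xi\not\subseteq A_\alpha$ for every $\alpha<\kappa_\xi$.

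To construct $B_\xi$, take any $B_\xi\in[X]^{\kappa_\xi^+}$ and argue as follows. An $A_\alpha$ containing $B_\xi$ must have $|A_\alpha|\ge\kappa_\xi^+$. This alone does not exclude containment, so the choice must be made more carefully. Write $X=\bigcup_{\eta<\mu}X_\eta$ with $|X_\eta|=\kappa_\eta$, disjoint. For $\alpha<\kappa_\xi$, since $|A_\alpha|<\kappa$ there is some $\eta$ with $|A_\alpha|<\kappa_\eta$. Then the set $B_\xi$ built below must avoid containment.

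The cleaner route is a diagonal over the partition. For each $\eta<\mu$, the family $\{A_\alpha\cap X_\eta:\alpha<\kappa\}$ has $\kappa$ members. I will let $B\cap X_\eta$ be a subset of $X_\eta$ of size $\kappa_\eta$ chosen, for $\eta\ge1$, to escape from the sets $A_\alpha$ with $\alpha<\kappa_\eta$ and $|A_\alpha|<\kappa_\eta$. Since $X_\eta$ itself has size $\kappa_\eta$, taking $B\cap X_\eta=X_\eta$ works: any $A_\alpha$ with $|A_\alpha|<\kappa_\eta$ cannot contain $X_\eta$.

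That choice makes $B=X$, which is too large, so I will take $B:=\bigcup_{\eta<\mu}X'_\eta$ with $X'_\eta\subseteq X_\eta$ of size $\kappa'_\eta$, where the $\kappa'_\eta$ form a sequence that is still cofinal in $\kappa$. The trouble is that $|B|=\kappa$ regardless. This is the main obstacle, and it shows the argument must diagonalize using cardinality rather than points.

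To resolve it, reformulate via $\cf([\kappa]^{<\kappa},\subseteq)$. A cofinal family of size $\kappa$ would, after splitting by size, contain for each $\xi$ a subfamily of size $\le\kappa$ cofinal in $[X]^{\le\kappa_\xi}$. Restricting to a fixed $Y\in[X]^{\kappa_\xi^{+}}$, one sees that $\cof([\kappa]^{\le\kappa_\xi})$ must be $\le\kappa$ for cofinally many $\xi$. The contradiction should then come from the standard fact $\cof([\kappa]^{<\kappa})>\kappa$ for singular $\kappa$, proved by König's theorem. Write a purported cofinal family as $\{A_\alpha:\alpha<\kappa\}=\bigcup_{\xi<\mu}\mathcal{A}_\xi$ with $|\mathcal{A}_\xi|\le\kappa_\xi$. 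Choose pairwise disjoint sets $Y_\xi$ of size $\kappa_\xi^{+}$. Pick $y_\xi$-sets $C_\xi\subseteq Y_\xi$ of size $\mu^+$ (or $\mu$, adjusted so that $\bigcup_\xi C_\xi$ has size $<\kappa$), chosen so that no member of $\mathcal{A}_\xi$ of size $<\kappa_\xi^{+}$ contains $C_\xi$. Then handle the members of $\mathcal{A}_\xi$ of large size by a counting argument across the $\mu$ blocks. I expect exactly this last bookkeeping, namely choosing $C=\bigcup_\xi C_\xi$ of size $<\kappa$ that escapes every $A_\alpha$, to be the delicate step. There I would first try to take $|C_\xi|=\mu$ and use that each $A_\alpha$ has size $<\kappa_{\eta(\alpha)}$, hence cannot meet all of the sets $Y_\xi$ in large pieces.
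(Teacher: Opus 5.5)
Your proposal has a genuine gap: it never reaches a proof. The route you end on is left unfinished. You say yourself that choosing $C=\bigcup_\xi C_\xi$ of size $<\kappa$ that escapes every $A_\alpha$ is ``the delicate step,'' and you do not carry it out. The appeal to ``the standard fact $\cof([\kappa]^{<\kappa})>\kappa$ for singular $\kappa$'' is circular, because by Remark~\ref{rmk_FCT_cubierta_cofinalidad} that fact \emph{is} the proposition. The intermediate reformulation is also false as stated. From a family $\{A_\alpha:\alpha<\kappa\}$ cofinal in $[X]^{<\kappa}$ you cannot extract a subfamily of sets of size $\le\kappa_\xi$ cofinal in $[X]^{\le\kappa_\xi}$: the member covering a given small set may itself be large, and all members could have size $\ge\kappa_\xi^+$. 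Finally, in your last scheme a member of $\mathcal{A}_\zeta$ of size $\ge\kappa_\zeta^+$ is exactly what your unspecified ``counting argument across the blocks'' would have to handle. Nothing in the construction forces such a set to be dealt with at a later stage.

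The missing idea is one you almost wrote down in your ``cleaner route'' paragraph. Use \emph{cumulative} blocks, and at stage $\xi$ handle only the sets $A_\alpha$ with $\alpha<\kappa_\xi$ \emph{and} $|A_\alpha|<\kappa_\xi$. Their union $B_\xi$ has cardinality at most $\kappa_\xi\cdot\kappa_\xi=\kappa_\xi<\kappa$, so a \emph{single} point $x_\xi\in X\setminus B_\xi$ exists. Choose it in all of $X$, not inside a piece $X_\xi$ of size $\kappa_\xi$, which $B_\xi$ might cover. Then $C:=\{x_\xi:\xi<\cf(\kappa)\}\in[X]^{<\kappa}$. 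For each fixed $\alpha$, both $\alpha<\kappa_\xi$ and $|A_\alpha|<\kappa_\xi$ hold for all sufficiently large $\xi$, so $A_\alpha\subseteq B_\xi$ and hence $x_\xi\in C\setminus A_\alpha$. Thus no $A_\alpha$ contains $C$. This disposes of both obstacles in your bullets: the whole block may have union of size $\kappa$, but every fixed $A_\alpha$ eventually counts as a small member of a block. The paper phrases the same argument by contradiction: the sets $B_\xi$ would form a cofinal subfamily of $[X]^{<\kappa}$ of size $\cf(\kappa)<\kappa\le\pi w(X(\kappa))$.
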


\begin{proof} Let $\{\kappa_\xi : \xi < \cf(\kappa)\}$ be a strictly increasing sequence of cardinals such that
$\sup\{\kappa_\xi : \xi < \cf(\kappa)\} = \kappa$.

Suppose, toward a contradiction, that $\pi w(X(\kappa)) < \kappa^+$. By the lower bound in Proposition~\ref{prop_FCT_pi_peso_cotas}, it follows that $\pi w(X(\kappa)) = \kappa$, and hence there exists a cofinal subset $\mathcal{A}$ of $[X]^{<\kappa}$ of cardinality $\kappa$.

Let $\{A_\alpha : \alpha<\kappa\}$ be an enumeration without repetitions of $\mathcal{A}$. Moreover, for each $\xi < \cf(\kappa)$, let
\[\textstyle
I_\xi := \{\alpha < \kappa : \alpha < \kappa_\xi\}, \
J_\xi := \{\alpha<\kappa : |A_\alpha| < \kappa_\xi\} \ \text{and} \
B_\xi := \bigcup\{A_\alpha : \alpha \in I_\xi \cap J_\xi\}.
\]
Clearly, $\{B_\xi : \xi<\cf(\kappa)\}$ is a subset of $[X]^{<\kappa}$.

Let $B\in [X]^{<\kappa}$, and let $\alpha<\kappa$ be such that $B \subseteq A_\alpha$. Also, let $\eta<\cf(\kappa)$ be such that $\alpha < \kappa_\eta$, and choose $\eta < \xi < \cf(\kappa)$ such that $|A_{\alpha}| < \kappa_\xi$. Thus, $\alpha \in I_\xi \cap J_\xi$ implies that $A_\alpha \subseteq B_\xi$ and, consequently, $B\subseteq B_\xi$.

Therefore, $\{B_\xi : \xi<\cf(\kappa)\}$ is a cofinal subset of $[X]^{<\kappa}$ of cardinality strictly less than $\pi w(X)$, which is impossible.
\end{proof}

It is well known that the Generalized Continuum Hypothesis (abbreviated as \textsf{GCH} hereafter) allows the infinite cardinal arithmetic to be completely determined (see \cite{kunen1980}). In particular, if $\omega \leq \lambda \leq \kappa$, then, under \textsf{GCH}, we have

\begin{equation}\label{eq_GCH}
\kappa^{<\lambda}
=
\begin{cases}
\kappa, & \text{if } \lambda \leq \cf(\kappa), \\
\kappa^+, & \text{if } \cf(\kappa) < \lambda.
\end{cases}
\end{equation}

In view of~(\ref{eq_GCH}), we are in a position to establish Proposition~\ref{prop_GCH_basico}, with which we conclude this section. Its proof follows immediately from Propositions~\ref{prop_FCT_pi_peso_cotas}--\ref{prop_FCT_pi_peso_singular}.

\begin{proposition}\label{prop_GCH_basico} If \textsf{GCH} holds, then
\[
\pi w(X(\lambda,\kappa)) =
\begin{cases}
\kappa, & \text{if } \lambda \leq \cf(\kappa), \\
\kappa^+, & \text{if } \cf(\kappa) < \lambda.
\end{cases}
\]

\end{proposition}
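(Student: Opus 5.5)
Under \textsf{GCH} we squeeze $\pi w(X(\lambda,\kappa))$ between the lower bounds already proved and the upper bound $\kappa^{<\lambda}$ from Proposition~\ref{prop_FCT_pi_peso_cotas}, whose value is given by~(\ref{eq_GCH}). We split according to whether $\lambda \leq \cf(\kappa)$ or $\cf(\kappa) < \lambda$.

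\emph{Case $\lambda \leq \cf(\kappa)$.} Proposition~\ref{prop_FCT_pi_peso_cotas} gives $\kappa \leq \pi w(X) \leq \kappa^{<\lambda}$, and by~(\ref{eq_GCH}) we have $\kappa^{<\lambda} = \kappa$ in this case. Hence $\pi w(X) = \kappa$ immediately.

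\emph{Case $\cf(\kappa) < \lambda$.} By~(\ref{eq_GCH}), $\kappa^{<\lambda} = \kappa^+$, so Proposition~\ref{prop_FCT_pi_peso_cotas} gives the upper bound $\pi w(X) \leq \kappa^+$. It remains to show $\pi w(X) \geq \kappa^+$. Since $\cf(\kappa) < \lambda \leq \kappa$, $\kappa$ is singular, and we distinguish two subcases.
\begin{itemize}
\item If $\lambda < \kappa$, then $\cf(\kappa) < \lambda < \kappa$, and Proposition~\ref{prop_FCT_cf_kappa_menor_lambda} gives $\pi w(X) \geq \kappa^+$.
\item If $\lambda = \kappa$, then $\kappa$ is singular and $X = X(\kappa)$, so Proposition~\ref{prop_FCT_pi_peso_singular} gives $\pi w(X) \geq \kappa^+$.
\end{itemize}
Combining with the upper bound yields $\pi w(X) = \kappa^+$.

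The argument is a routine combination of earlier results; there is no real obstacle. The only point needing care is checking that the two lower-bound propositions together cover the entire case $\cf(\kappa) < \lambda$, i.e.\ that the boundary subcase $\lambda = \kappa$ (which forces $\kappa$ singular) is handled by Proposition~\ref{prop_FCT_pi_peso_singular} rather than Proposition~\ref{prop_FCT_cf_kappa_menor_lambda}.
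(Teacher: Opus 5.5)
Your proof is correct and follows the same route as the paper, which only remarks that the result follows from Propositions~\ref{prop_FCT_pi_peso_cotas}--\ref{prop_FCT_pi_peso_singular} together with~(\ref{eq_GCH}): the upper bound $\kappa^{<\lambda}$ is evaluated under \textsf{GCH} and matched with the lower bound $\kappa$, or with $\kappa^+$ from Proposition~\ref{prop_FCT_cf_kappa_menor_lambda} when $\lambda<\kappa$ and from Proposition~\ref{prop_FCT_pi_peso_singular} when $\lambda=\kappa$. Your check that the case $\cf(\kappa)<\lambda$ splits exactly into these two subcases, with $\lambda=\kappa$ forcing $\kappa$ singular, is the one detail the paper leaves implicit.
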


Consequently, Proposition~\ref{prop_GCH_basico} determines the cofinality of the ideal $[\kappa]^{<\lambda}$ whenever \textsf{GCH} holds.

%%%%%%%%%%%%%%%%%%%%%%%%%%%%%%%%%%%%%%%%%%%%%%%%%%%%%%%%%%%%%%
\section{Chain Conditions}\label{secc_calibres}
%%%%%%%%%%%%%%%%%%%%%%%%%%%%%%%%%%%%%%%%%%%%%%%%%%%%%%%%%%%%%%

The purpose of this section is to study the chain conditions of the spaces $X(\lambda,\kappa)$ described above. A first remark is that, since all spaces of this form are hyperconnected, we have $\P(X(\lambda,\kappa)) = \CN = \WP(X(\lambda,\kappa))$, by Proposition~\ref{prop_basico}(\ref{basico_hiperconexo}). Therefore, the problem of interest is to study the calibers of the spaces $X(\lambda,\kappa)$. In this regard, the present investigation continues part of the work carried out in \cite{riotam2023}. In that article, according to items~(\ref{basico_T1}) and~(\ref{basico_cofinito}) of Proposition~\ref{prop_basico}, it is shown that $\C(X(\omega)) = \UC$ and $\C(X(\omega,\kappa)) = \CN$ whenever $\kappa > \omega$.

Remark~\ref{rmk_calibres_fundamental} is a fundamental tool for the development of this section. The equivalence established there follows immediately by taking complements appropriately with respect to the elements of the topology $\tau(\lambda,\kappa)$.

\begin{remark}\label{rmk_calibres_fundamental}
If $\mu\in\CN$, then $\mu$ is a caliber for the space $X(\lambda,\kappa)$ if and only if, for every family $\{A_\alpha:\alpha<\mu\}\subseteq[X]^{<\lambda}$, there exists a set $J\in[\mu]^\mu$ such that $\bigcup_{\alpha\in J}A_\alpha\neq X$.

\end{remark}

Our first task is to prove Proposition~\ref{prop_calibres_basico}, in which we use the information obtained in the preceding section concerning the cardinal functions of the spaces $X(\lambda,\kappa)$ to obtain some of their calibers.

\begin{proposition}\label{prop_calibres_basico} The following statements hold.

\begin{enumerate}
\item If $\lambda < \kappa$, then every element of the union
\[
\{\mu \in \CN : \mu < \kappa\}
\cup
\{\mu \in \CN : \cf(\mu) > \lambda\}
\]
is a caliber for $X(\lambda,\kappa)$.

\item Every element of the union
\[
\{\mu \in \CN : \mu < \cf(\kappa)\}
\cup
\{\mu \in \CN : \cf(\mu) > \kappa\}
\]
is a caliber for $X(\kappa)$. Moreover,
\[
\{\mu \in \CN : \mu > \pi w(X(\kappa))\ \wedge \ \cf(\kappa) > \cf(\mu)\}
\subseteq
\C(X(\kappa)).
\]

\end{enumerate}

\end{proposition}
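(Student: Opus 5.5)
We reduce every clause to Remark~\ref{rmk_calibres_fundamental}, Proposition~\ref{prop_basico}, and the cardinal functions computed in Section~\ref{secc_FCT}.

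\emph{Small calibers.} In both items, calibers below the covering number come from a counting argument. Let $\mu<\psi(X(\lambda,\kappa))$, so $\mu<\kappa$ when $\lambda<\kappa$ and $\mu<\cf(\kappa)$ when $\lambda=\kappa$ (Proposition~\ref{prop_FCT_pseudocaracter_2}). Take any family $\{A_\alpha:\alpha<\mu\}\subseteq[X]^{<\lambda}$. By Remark~\ref{rmk_FCT_cubierta_cofinalidad}, $\psi(X)$ equals $\cov([X]^{<\lambda})$, so fewer than $\cov([X]^{<\lambda})$ sets of $[X]^{<\lambda}$ cannot cover $X$. Hence $\bigcup_{\alpha<\mu}A_\alpha\neq X$, and $J=\mu$ witnesses the condition of Remark~\ref{rmk_calibres_fundamental}.

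\emph{Large cofinality.} Both the clause $\cf(\mu)>\lambda$ in item (1) and the clause $\cf(\mu)>\kappa$ in item (2) follow from Proposition~\ref{prop_basico}(\ref{basico_densidad}), since $d(X(\lambda,\kappa))=\lambda$ by Proposition~\ref{prop_FCT_densidad}. For the record, the direct argument is as follows. Fix a set $D\in[X]^\lambda$. Each $A_\alpha$ misses some point of $D$, because $|A_\alpha|<\lambda=|D|$. This gives a map $\alpha\mapsto x_\alpha\in D$ with $x_\alpha\notin A_\alpha$. Since $\cf(\mu)>\lambda$, some fiber has size $\mu$, and that fiber is the required $J$.

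\emph{The remaining clause of item (2).} Let $\mu>\pi w(X(\kappa))$ with $\cf(\mu)<\cf(\kappa)$. By the small-caliber clause already proved, $\cf(\mu)$ is a caliber of $X(\kappa)$, because $\cf(\mu)<\cf(\kappa)$ and $\cf(\mu)$ is infinite. Proposition~\ref{prop_basico}(\ref{basico_pi_peso}) then gives $\mu\in\C(X(\kappa))$ directly.

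None of these steps is a serious obstacle. The only point needing care is that $\cf(\mu)$ is an infinite cardinal below $\cf(\kappa)$, so that the first clause applies to it. Beyond that, we only need to check that the covering-number characterization of $\psi$ is used in the correct direction: a family of fewer than $\cov([X]^{<\lambda})$ sets fails to cover $X$.
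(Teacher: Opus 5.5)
Your proof is correct and follows essentially the same route as the paper's. The small calibers come from $\mu<\psi(X)$; your covering-number phrasing via $[X]^{<\lambda}$ is the same fact as Proposition~\ref{prop_FCT_pseudocaracter_1}. The clauses $\cf(\mu)>\lambda$ and $\cf(\mu)>\kappa$ come from $d(X)=\lambda$ and Proposition~\ref{prop_basico}(\ref{basico_densidad}); your fiber-counting unpacking of that case is correct. The last clause comes from Proposition~\ref{prop_basico}(\ref{basico_pi_peso}) applied to $\cf(\mu)<\cf(\kappa)=\psi(X(\kappa))$.
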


\begin{proof} For item~(1), let $A := \{\mu \in \CN : \mu < \kappa\}$ and $B := \{\mu \in \CN : \cf(\mu) > \lambda\}$. First, if $\mu \in A$ and $\{U_\alpha : \alpha<\mu\} \subseteq \tau_X^+$, then the fact that $\mu < \psi(X)$ (see Proposition~\ref{prop_FCT_pseudocaracter_2}) guarantees that $\bigcap_{\alpha<\mu} U_\alpha \neq \emptyset$. Thus, $\mu \in \C(X)$.

On the other hand, if $\mu \in B$, the relation $\cf(\mu) > d(X)$ (see Proposition~\ref{prop_FCT_densidad}) implies, by Proposition~\ref{prop_basico}(\ref{basico_densidad}), that $\mu \in \C(X)$.

As for item~(2), the proof of the first part is analogous to that of item~(1), while the second part follows from Proposition~\ref{prop_basico}(\ref{basico_pi_peso}) and Proposition~\ref{prop_FCT_pseudocaracter_2}.
\end{proof}

We next analyze what happens with the spaces $X(\kappa)$. To begin with, Lemma~\ref{lema_calibres_cof_no_es} provides a sufficient condition for an infinite cardinal not to be a caliber for spaces of this type.

\begin{lemma}\label{lema_calibres_cof_no_es} If $\mu\in \CN$ satisfies $\cf(\mu) = \cf(\kappa)$, then $\mu$ is not a caliber for $X(\kappa)$.

\end{lemma}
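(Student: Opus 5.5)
By Remark~\ref{rmk_calibres_fundamental}, it suffices to build a family $\{A_\alpha:\alpha<\mu\}\subseteq[X]^{<\kappa}$ such that $\bigcup_{\alpha\in J}A_\alpha = X$ for every $J\in[\mu]^\mu$. The plan is to exploit the common cofinality $\theta:=\cf(\mu)=\cf(\kappa)$ by splitting both $\mu$ and $X$ into $\theta$ blocks.

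First fix the two decompositions.
\begin{itemize}
\item Since $\cf(\kappa)=\theta$, write $X=\bigcup_{\xi<\theta}X_\xi$ as an increasing union with $|X_\xi|<\kappa$ for each $\xi$. If $\kappa$ is regular, then $\theta=\kappa$ and we take $X_\xi:=\{x_\alpha:\alpha<\xi\}$ for an enumeration of $X$. If $\kappa$ is singular, we take $X_\xi$ of size $\kappa_\xi$ for a cofinal sequence $(\kappa_\xi)$ in $\kappa$. This is just the statement $\psi(X(\kappa))=\cf(\kappa)$ of Proposition~\ref{prop_FCT_pseudocaracter_2}, made increasing.
\item Choose a strictly increasing sequence of ordinals $(\mu_\xi:\xi<\theta)$ cofinal in $\mu$, with $\mu_0=0$. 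For $\alpha<\mu$, let $\xi(\alpha)$ be the unique $\xi$ with $\mu_\xi\le\alpha<\mu_{\xi+1}$.
\end{itemize}
Now define $A_\alpha:=X_{\xi(\alpha)}\in[X]^{<\kappa}$.

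Let $J\in[\mu]^\mu$. The key claim is that $\{\xi(\alpha):\alpha\in J\}$ is cofinal in $\theta$. Suppose instead that it is bounded by some $\eta<\theta$. Then $J\subseteq\mu_{\eta}$, which has cardinality $|\mu_\eta|<\mu$, because $\mu_\eta<\mu$ as an ordinal and $\mu$ is a cardinal. This contradicts $|J|=\mu$. So $\{\xi(\alpha):\alpha\in J\}$ is cofinal, and since the $X_\xi$ increase with union $X$, we get $\bigcup_{\alpha\in J}A_\alpha=X$. Hence $\mu$ is not a caliber.

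The only delicate point is keeping each block of indices below $\mu$ in cardinality while making the blocks cofinal. The argument above uses ordinal cuts $\mu_\xi$ for this, so it works whether $\mu$ is regular or singular, including $\mu=\theta$. No single block needs size $\mu$, since $J$ having size $\mu$ already forces it to meet cofinally many blocks. I therefore expect no real obstacle.
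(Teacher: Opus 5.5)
Your proof is correct and is essentially the paper's. The paper builds the same increasing initial segments $\{x_\alpha:\alpha<f(\xi)\}$, $\xi<\cf(\kappa)$, shows that $\cf(\kappa)$ is not a caliber, and then passes to $\mu$ via Proposition~\ref{prop_basico}(\ref{basico_cofinalidad}); you instead inline that last step by repeating each $X_\xi$ along a block of $\mu$, which is just the usual proof of that proposition unfolded.

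One small repair is needed. The ``unique $\xi$ with $\mu_\xi\le\alpha<\mu_{\xi+1}$'' need not exist unless $(\mu_\xi)$ is continuous at limit stages. So either choose the cofinal sequence continuous, or simply put $\xi(\alpha):=\min\{\xi<\theta:\alpha<\mu_\xi\}$; with that definition your boundedness argument goes through verbatim.
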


\begin{proof} By Proposition~\ref{prop_basico}(\ref{basico_cofinalidad}), to show that $\mu$ is not a caliber for $X$, it suffices to verify that $\nu := \cf(\kappa)$ is not a caliber for $X$.

Let $\{x_\alpha : \alpha < \kappa\}$ be an enumeration without repetitions of $X$, and let $f \colon \nu \to \kappa$ be a strictly increasing cofinal function. Moreover, for each $\xi < \nu$, let $A_\xi := \{x_\alpha : \alpha < f(\xi)\}$. Clearly, $\{A_\xi : \xi < \nu\} \subseteq [X]^{<\kappa}$. Furthermore, for every $J \in [\nu]^\nu$, we have $\bigcup_{\xi \in J} A_\xi = X$. Thus, Remark~\ref{rmk_calibres_fundamental} implies that $\nu$ is not a caliber for $X$ and, consequently, that $\mu$ is not a caliber either.
\end{proof}

With the preceding lemma, we are in a position to compute the calibers of $X(\kappa)$ when $\kappa$ coincides with its cofinality.

\begin{theorem}\label{thm_calibres_lambda_igual_kappa_regular}
If $\kappa$ is regular, then $\C(X(\kappa)) = \{\mu \in \CN : \cf(\mu) \neq \kappa\}$.

\end{theorem}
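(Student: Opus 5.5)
We prove the two inclusions separately. When $\kappa$ is regular, Proposition~\ref{prop_FCT_kappa_regular} gives $\pi w(X(\kappa)) = \kappa = d(X(\kappa))$, and the calibers below and above $\kappa$ are then handled by the general results of Section~2 together with Lemma~\ref{lema_calibres_cof_no_es}.

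\emph{Non-calibers.} If $\cf(\mu) = \kappa$, then $\cf(\mu) = \kappa = \cf(\kappa)$ because $\kappa$ is regular. Lemma~\ref{lema_calibres_cof_no_es} then says directly that $\mu \notin \C(X(\kappa))$. This gives $\C(X(\kappa)) \subseteq \{\mu \in \CN : \cf(\mu) \neq \kappa\}$.

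\emph{Calibers.} Let $\mu \in \CN$ with $\cf(\mu) \neq \kappa$. We split into two cases according to whether $\cf(\mu) < \kappa$ or $\cf(\mu) > \kappa$.

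\begin{enumerate}
\item If $\cf(\mu) > \kappa$, then $\mu \in \C(X(\kappa))$ by Proposition~\ref{prop_calibres_basico}(2). Equivalently, Proposition~\ref{prop_basico}(\ref{basico_densidad}) applies, since $d(X(\kappa)) = \kappa$.
\item If $\cf(\mu) < \kappa$, the infinite cardinal $\cf(\mu)$ lies below $\cf(\kappa) = \kappa$. So $\cf(\mu)$ is a caliber by the first part of Proposition~\ref{prop_calibres_basico}(2); directly, fewer than $\psi(X(\kappa)) = \kappa$ nonempty open sets always have nonempty intersection.
 \begin{enumerate}
 \item If moreover $\mu < \kappa$, then $\mu$ itself is a caliber by the same argument.
 \item If $\mu > \kappa$, then $\mu > \pi w(X(\kappa))$ by Proposition~\ref{prop_FCT_kappa_regular}. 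Since $X(\kappa)$ has caliber $\cf(\mu)$, Proposition~\ref{prop_basico}(\ref{basico_pi_peso}) yields $\mu \in \C(X(\kappa))$. This is exactly the ``moreover'' clause of Proposition~\ref{prop_calibres_basico}(2), because $\cf(\kappa) = \kappa > \cf(\mu)$.
 \end{enumerate}
\end{enumerate}
The remaining possibility $\mu = \kappa$ cannot occur, since then $\cf(\mu) = \kappa$.

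The only step with real content is case (2)(b), singular $\mu > \kappa$ with $\cf(\mu) < \kappa$. It rests entirely on the exact computation $\pi w(X(\kappa)) = \kappa$ for regular $\kappa$, which is already available. The rest is bookkeeping: checking that the cases are exhaustive and that the lemma covers every cardinal of cofinality $\kappa$.
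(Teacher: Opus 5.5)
Your proof is correct and follows the paper's argument. The paper partitions $\CN$ into the same four classes ($\mu<\kappa$; $\cf(\mu)>\kappa$; $\mu>\kappa>\cf(\mu)$; $\cf(\mu)=\kappa$) and handles them with the same tools: Proposition~\ref{prop_FCT_kappa_regular}, both parts of Proposition~\ref{prop_calibres_basico}(2), and Lemma~\ref{lema_calibres_cof_no_es}.
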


\begin{proof} Let
\begin{alignat*}{2}
A_0 &:= \{\mu \in \CN : \mu < \kappa\},
&\quad
A_1 &:= \{\mu \in \CN : \cf(\mu) > \kappa\},\\
A_2 &:= \{\mu \in \CN : \mu > \kappa > \cf(\mu)\},
&\quad
A_3 &:= \{\mu \in \CN : \cf(\mu) = \kappa\}.
\end{alignat*}

Observe that $A_3 \cap \bigcup_{i<3} A_i = \emptyset$ and that $\bigcup_{i<4} A_i = \CN$. Moreover, by Proposition~\ref{prop_FCT_kappa_regular}, Proposition~\ref{prop_calibres_basico}(2), and Lemma~\ref{lema_calibres_cof_no_es}, we have
\[
\textstyle
\C(X) \subseteq \CN \setminus A_3 = \bigcup_{i<3} A_i \subseteq \C(X).
\]
Therefore, $\C(X) = \CN \setminus A_3$.
\end{proof}

A pertinent remark is that, by taking $\kappa = \omega$ in Theorem~\ref{thm_calibres_lambda_igual_kappa_regular}, we obtain
\[
\C(X(\omega)) = \{\mu \in \CN : \cf(\mu) \neq \omega\} = \{\mu \in \CN : \cf(\mu) > \omega\} = \UC;
\]
that is, Theorem~\ref{thm_calibres_lambda_igual_kappa_regular} recovers the equality $\C(X(\omega)) = \UC$ obtained in \cite{riotam2023}.

In the case of the space $X(\kappa)$ when $\kappa$ is singular, our first objective is to establish, using Remark~\ref{rmk_calibres_fundamental}, Lemmas~\ref{lema_calibres_singular_1} and~\ref{lema_calibres_singular_2}, which constitute the main tools for this part of the analysis.

\begin{lemma}\label{lema_calibres_singular_1} If $\mu \in \CN$ satisfies $\cf(\kappa) < \cf(\mu) < \kappa$, then $\mu$ is a caliber for $X(\kappa)$.

\end{lemma}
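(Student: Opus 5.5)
Since $\cf(\mu)<\kappa$ forces $\cf(\mu)\le\mu$, and $\mu$ may be at least $\kappa$, we first reduce to the regular cardinal $\nu:=\cf(\mu)$. If $\mu<\kappa$ itself, it seems simplest to argue directly; but a uniform route is to first prove that $\nu$ is a caliber and then lift to $\mu$. For the lifting step, Proposition~\ref{prop_basico}(\ref{basico_pi_peso}) is of no use when $\mu\le\pi w(X)$, so we argue directly via Remark~\ref{rmk_calibres_fundamental}. Let $\{A_\alpha:\alpha<\mu\}\subseteq[X]^{<\kappa}$. Fix a strictly increasing sequence $\{\kappa_\xi:\xi<\cf(\kappa)\}$ cofinal in $\kappa$, and sort the indices by size: $I_\xi:=\{\alpha<\mu:|A_\alpha|<\kappa_\xi\}$. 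Since $\mu=\bigcup_{\xi<\cf(\kappa)}I_\xi$ and $\cf(\kappa)<\cf(\mu)$, some $I_\xi$ has cardinality $\mu$. (This is the standard fact that a set of size $\mu$ split into fewer than $\cf(\mu)$ pieces has a piece of full size.) So we may assume that every $A_\alpha$ has size less than $\theta:=\kappa_\xi<\kappa$.

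Next we write $\mu$ as an increasing union $\bigcup_{\eta<\nu}S_\eta$ of sets with $|S_\eta|<\mu$ when $\mu$ is singular, or take $S_\eta$ to be singletons when $\mu=\nu$ is regular. The goal is a set $J$ of size $\mu$ with $|\bigcup_{\alpha\in J}A_\alpha|<\kappa$, or at least $\neq X$. Since $\nu<\kappa$ and each $A_\alpha$ has size less than $\theta$, any union of $\nu$ many of the $A_\alpha$ has size at most $\nu\cdot\theta<\kappa$; but $J$ must have size $\mu$, which may be at least $\kappa$.

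The key idea is therefore to keep $J$ within a single cofinal-size block and control the union by a point-counting argument rather than by cardinality. Consider the points $x\in X$ and the sets $T_x:=\{\alpha\in I_\xi:x\in A_\alpha\}$. We want some $x$ such that $\mu\setminus T_x$, restricted to $I_\xi$, has size $\mu$; then $J:=I_\xi\setminus T_x$ omits $x$. Suppose instead that every $x$ lies in all but fewer than $\mu$ of the $A_\alpha$. Pick any $\kappa_{\xi'}$ with $\kappa_{\xi'}\ge\theta^{+}$, and choose a set $Y\subseteq X$ of $\theta^+\le\kappa$ points. Each $x\in Y$ misses fewer than $\mu$ indices. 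If $\mu$ is regular, the union of these $\theta^+$ exceptional sets has size less than $\mu$ provided $\theta^+<\mu$ (handled via $\cf(\mu)>\cf(\kappa)$ and choosing $\xi$ suitably). We then obtain some $\alpha$ with $Y\subseteq A_\alpha$, contradicting $|A_\alpha|<\theta$. For singular $\mu$, we run the same argument using $\nu$ many blocks.

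The main obstacle is exactly this counting step: making sure the union of the exceptional index sets stays below $\mu$ (or below $\nu$ in block form) while $|Y|$ exceeds $\theta$. The plan is to prove the regular case $\mu=\nu$ first, where $\theta<\kappa$, $\nu>\cf(\kappa)$, and the combination of $\nu$ regular with $\theta^+$ pieces needs either $\theta^+<\nu$ or a refined choice of $Y$ of size $\nu$. If $\theta\ge\nu$, we pick $Y$ of size $\theta^+$ and instead use that $\nu$ sets each of size less than $\theta$ cover fewer than $\kappa$ points. Then $\bigcup_{\alpha\in J}A_\alpha$ for any $J$ of size $\nu$ has size at most $\nu\cdot\theta<\kappa$, hence is not $X$, which settles $\mu=\nu$ immediately. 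For $\mu>\nu$, we apply this inside blocks and a diagonal choice across the $\nu$ blocks.
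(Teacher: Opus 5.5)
There is a genuine gap, and it lies in the only nontrivial case. Your reduction to $I_\xi$ with $|I_\xi|=\mu$ and $|A_\alpha|<\theta$ for $\alpha\in I_\xi$ is correct. After it, the case $\mu<\kappa$ is immediate, since $|\bigcup_{\alpha\in I_\xi}A_\alpha|\le\mu\cdot\theta<\kappa$. Also, $\cf(\kappa)<\cf(\mu)$ rules out $\mu=\kappa$. So everything rests on $\mu>\kappa$, where $\mu$ is singular.

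Your point-counting argument is only carried out for regular $\mu$ with $\theta^+<\mu$. Moreover, $\xi$ cannot be ``chosen suitably'': it is dictated by the family, so $\theta$ may lie anywhere below $\kappa$. For singular $\mu$ you only say you will apply the argument inside blocks and make ``a diagonal choice across the $\nu$ blocks''. But a good point $x_\eta$ for each block does not give one point omitted by $\mu$ many of the $A_\alpha$. You need a single point that is good for cofinally many blocks, and pigeonholing $\nu$ blocks into $\theta^+$ candidate points fails exactly when $\theta\ge\nu$, which is the typical situation. You flag this obstacle yourself, but your remedy for $\theta\ge\nu$ only disposes of $\mu=\nu$.

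The missing idea is to bound the set of bad points in each block instead of finding one good point. Partition $I_\xi$ into blocks $S_\eta$ ($\eta<\nu$) of regular cardinalities $\mu_\eta>\theta$ cofinal in $\mu$. Let $B_\eta$ be the set of $x\in X$ omitted by fewer than $\mu_\eta$ of the $A_\alpha$ with $\alpha\in S_\eta$. Your own counting, applied to any $\theta$ points of $B_\eta$, gives $|B_\eta|<\theta$. Indeed, the $\theta$ exceptional index sets have union of size $<\mu_\eta$ by regularity, so some $A_\alpha$ with $\alpha\in S_\eta$ would contain all $\theta$ points, contradicting $|A_\alpha|<\theta$. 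Hence $|\bigcup_{\eta<\nu}B_\eta|\le\nu\cdot\theta<\kappa$. Any $x$ outside this union is omitted by $\mu_\eta$ many $A_\alpha$ in every block, hence by $\mu$ many in all, and you are done.

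The paper proceeds differently. It takes $\theta=\kappa_\xi$ regular with $\kappa_\xi>\cf(\mu)$, intersects everything with some $Y\in[X]^{\kappa_\xi}$, and applies Theorem~\ref{thm_calibres_lambda_igual_kappa_regular} to $X(\kappa_\xi)$. For $\mu>\kappa_\xi$, that theorem rests on Proposition~\ref{prop_basico}(\ref{basico_pi_peso}) with $\pi w(X(\kappa_\xi))=\kappa_\xi<\mu$. This is exactly the tool you set aside, made applicable by shrinking the space.
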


\begin{proof} Let $\{\kappa_\xi : \xi < \cf(\kappa)\}$ be a strictly increasing sequence of regular cardinals such that $\kappa_0 = \cf(\mu)^+$ and $\sup\{\kappa_\xi : \xi < \cf(\kappa)\} = \kappa$. Moreover, let $\{A_\alpha : \alpha < \mu\}$ be a subset of $[X]^{<\kappa}$, and for each $\xi < \cf(\kappa)$, let $J_\xi := \{\alpha < \mu : |A_\alpha| < \kappa_\xi\}$.

Observe that $\mu = \bigcup_{\xi < \cf(\kappa)} J_\xi$. On the other hand, if $|J_\xi| < \mu$ for every $\xi < \cf(\kappa)$, then, since $\cf(\kappa) < \cf(\mu)$, we would have $|\bigcup_{\xi < \cf(\kappa)} J_\xi| < \mu$, contradicting the equality above.

Thus, there exists $\xi < \cf(\kappa)$ such that $|J_\xi| = \mu$. Now, if $Y \in [X]^{\kappa_\xi}$, then, since $X(\kappa_\xi)$ has caliber $\mu$, as $\cf(\mu) < \kappa_0 \leq \kappa_\xi$ (see Theorem~\ref{thm_calibres_lambda_igual_kappa_regular}), and $\{A_\alpha \cap Y : \alpha \in J_\xi\}$ is a subset of $[Y]^{<\kappa_\xi}$, there exists $J \in [J_\xi]^{\mu}$ such that
$\bigcup_{\alpha \in J} (A_\alpha \cap Y) \neq Y$, which implies that $\bigcup_{\alpha \in J} A_\alpha \neq X$.
\end{proof}

\begin{lemma}\label{lema_calibres_singular_2} If $\mu \in \CN$ satisfies $\cf(\mu) < \cf(\kappa) < \mu$, then $\mu$ is a caliber for $X(\kappa)$.

\end{lemma}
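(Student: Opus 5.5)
The plan is to use Remark~\ref{rmk_calibres_fundamental}: given $\{A_\alpha:\alpha<\mu\}\subseteq[X]^{<\kappa}$, I will find $J\in[\mu]^\mu$ with $\bigcup_{\alpha\in J}A_\alpha\neq X$. As in Lemma~\ref{lema_calibres_singular_1}, the idea is first to find $\mu$ many indices whose sets $A_\alpha$ have cardinality bounded below $\kappa$. Then I pass to a subspace of regular cardinality and apply Theorem~\ref{thm_calibres_lambda_igual_kappa_regular}.

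\emph{Preliminary reductions.} If $\kappa$ is regular, then $\mu>\kappa>\cf(\mu)$, and the conclusion already follows from Theorem~\ref{thm_calibres_lambda_igual_kappa_regular}. So assume $\kappa$ is singular. Since $\cf(\mu)<\cf(\kappa)<\mu$, the cardinal $\mu$ is singular. Fix a strictly increasing sequence $\{\kappa_\xi:\xi<\cf(\kappa)\}$ of cardinals cofinal in $\kappa$, with $\kappa_0>\cf(\kappa)$. Also fix a strictly increasing sequence $\{\mu_\eta:\eta<\cf(\mu)\}$ of regular (say successor) cardinals cofinal in $\mu$, with $\mu_0>\cf(\kappa)$. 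Partition $\mu$ into the blocks $I_\eta:=[\sup_{\zeta<\eta}\mu_\zeta,\mu_\eta)$, so that $|I_\eta|=\mu_\eta$.

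\emph{Main step: bounding the sizes.} I expect this to be the main obstacle, because here the pigeonhole argument of Lemma~\ref{lema_calibres_singular_1} fails for $\mu$ itself. Indeed, $\cf(\mu)<\cf(\kappa)$, so $\mu$ could a priori be split into $\cf(\kappa)$ pieces $J_\xi:=\{\alpha<\mu:|A_\alpha|<\kappa_\xi\}$, each of size $<\mu$. The fix is to work block by block. Each $\mu_\eta$ is regular and $\mu_\eta>\cf(\kappa)$, and $I_\eta=\bigcup_{\xi<\cf(\kappa)}(I_\eta\cap J_\xi)$. Hence there is $\xi_\eta<\cf(\kappa)$ with $|I_\eta\cap J_{\xi_\eta}|=\mu_\eta$. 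Since $\cf(\mu)<\cf(\kappa)$ and $\cf(\kappa)$ is regular, $\xi^*:=\sup_{\eta<\cf(\mu)}\xi_\eta<\cf(\kappa)$. Setting $J^*:=\bigcup_\eta(I_\eta\cap J_{\xi_\eta})$, we get $|J^*|=\mu$ and $|A_\alpha|<\kappa_{\xi^*}$ for all $\alpha\in J^*$.

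\emph{Conclusion.} Let $\theta:=\kappa_{\xi^*}^+$. Then $\theta$ is a regular cardinal with $\theta<\kappa$ (as $\kappa$ is a limit cardinal), and $\theta>\cf(\kappa)>\cf(\mu)$, so $\cf(\mu)\neq\theta$. Choose $Y\in[X]^{\theta}$. Then $\{A_\alpha\cap Y:\alpha\in J^*\}\subseteq[Y]^{<\theta}$. By Theorem~\ref{thm_calibres_lambda_igual_kappa_regular}, $\mu$ is a caliber of $X(\theta)$, so Remark~\ref{rmk_calibres_fundamental} yields $J\in[J^*]^\mu$ with $\bigcup_{\alpha\in J}(A_\alpha\cap Y)\neq Y$. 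Therefore $\bigcup_{\alpha\in J}A_\alpha\neq X$, and Remark~\ref{rmk_calibres_fundamental} gives $\mu\in\C(X(\kappa))$.
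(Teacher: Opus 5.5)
Your proof is correct and has the same architecture as the paper's: isolate $\mu$ indices on which $|A_\alpha|$ is bounded by a single $\kappa_\xi$, then intersect with some $Y$ of regular size and apply Theorem~\ref{thm_calibres_lambda_igual_kappa_regular}. Your block-by-block pigeonhole is just a direct proof of the paper's Claim that $|J_\xi|=\mu$ for some $\xi$ (indeed $J^*\subseteq J_{\xi^*}$); the paper instead argues by contradiction, finding $I\in[\cf(\kappa)]^{\cf(\kappa)}$ on which $\xi\mapsto\min\{\eta<\cf(\mu):|J_\xi|<\mu_\eta\}$ is constant. Your separate treatment of regular $\kappa$ also covers a case that the paper's choice $\kappa_0=\cf(\kappa)$ tacitly excludes.

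One correction to your motivation. The $J_\xi$ increase with $\xi$ and exhaust $\mu$, so they cannot all have size $<\mu$ when $\cf(\mu)<\cf(\kappa)<\mu$. What breaks down is only the crude union bound from Lemma~\ref{lema_calibres_singular_1}, not the conclusion that some $J_\xi$ has size $\mu$.
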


\begin{proof} Let $\{A_\alpha : \alpha<\mu\}$ be a subset of $[X]^{<\kappa}$. Moreover, let $\{\kappa_\xi : \xi < \cf(\kappa)\}$ and $\{\mu_\eta : \eta < \cf(\mu)\}$ be strictly increasing sequences of regular cardinals such that $\kappa_0 = \cf(\kappa)$, $\sup\{\kappa_\xi : \xi < \cf(\kappa)\} = \kappa$, and $\sup\{\mu_\eta : \eta < \cf(\mu)\} = \mu$. Furthermore, for each $\xi < \cf(\kappa)$, let $J_\xi := \{\alpha < \mu : |A_\alpha| < \kappa_\xi\}$. Note that $\mu = \bigcup_{\xi \in I} J_\xi$ for every $I \in [\cf(\kappa)]^{\cf(\kappa)}$.

\medskip

\noindent {\bf Claim.} There exists $\xi < \cf(\kappa)$ such that $|J_\xi| = \mu$.

\medskip

Suppose, toward a contradiction, that $|J_\xi| < \mu$ for every $\xi<\cf(\kappa)$. Let $f : \cf(\kappa) \to \cf(\mu)$ be the function defined by
\[
f(\xi) := \min\{\eta < \cf(\mu) : |J_\xi| < \mu_\eta\}.
\]
The relation $\cf(\mu) < \cf(\kappa)$, together with the regularity of the cardinal $\cf(\kappa)$, yields some $\eta < \cf(\mu)$ and $I \in [\cf(\kappa)]^{\cf(\kappa)}$ such that $f[I] \subseteq \{\eta\}$. Thus, we obtain the contradiction
\[
\textstyle 
\mu = |\bigcup_{\xi \in I} J_\xi| \leq |I| \cdot \sup\{|J_\xi| : \xi \in I\} \leq \cf(\kappa) \cdot \mu_\eta < \mu.
\]

\medskip

Finally, if $\xi < \cf(\kappa)$ satisfies $|J_\xi| = \mu$ and $Y \in [X]^{\kappa_\xi}$, then, since $X(\kappa_\xi)$ has caliber $\mu$, as $\cf(\mu) < \kappa_0 \leq \kappa_\xi$ (see Theorem~\ref{thm_calibres_lambda_igual_kappa_regular}), and $\{A_\alpha \cap Y : \alpha \in J_\xi\}$ is a subset of $[Y]^{<\kappa_\xi}$, there exists a set $J \in [J_\xi]^{\mu}$ such that
$\bigcup_{\alpha \in J} (A_\alpha \cap Y) \neq Y$; in particular, $\bigcup_{\alpha \in J} A_\alpha \neq X$.
\end{proof}

Based on the preceding results, it is possible to determine the calibers of $X(\kappa)$ when the cardinal $\kappa$ is singular.

\begin{theorem}\label{thm_calibres_lambda_igual_kappa_singular} If $\kappa$ is singular, then $\C(X(\kappa)) = \{\mu \in \CN : \cf(\mu) \neq \cf(\kappa)\}$.

\end{theorem}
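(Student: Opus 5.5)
We prove the theorem by partitioning $\CN$ according to how $\mu$ and $\cf(\mu)$ compare with $\cf(\kappa)$ and $\kappa$, and then checking each piece against the results already established, in the same spirit as the proof of Theorem~\ref{thm_calibres_lambda_igual_kappa_regular}. Let $\nu:=\cf(\kappa)<\kappa$ and consider
\begin{alignat*}{2}
B_0 &:= \{\mu \in \CN : \mu < \nu\},
&\quad
B_1 &:= \{\mu \in \CN : \nu < \cf(\mu) < \kappa\},\\
B_2 &:= \{\mu \in \CN : \cf(\mu) < \nu < \mu\},
&\quad
B_3 &:= \{\mu \in \CN : \cf(\mu) > \kappa\},\\
B_4 &:= \{\mu \in \CN : \cf(\mu) = \nu\}.
\end{alignat*}
We check that $\bigcup_{i<5}B_i=\CN$. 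Take $\mu\in\CN$ with $\cf(\mu)\neq\nu$. If $\cf(\mu)<\nu$, then either $\mu<\nu$, giving $B_0$, or $\mu>\nu$, giving $B_2$; the case $\mu=\nu$ is excluded because $\nu$ is regular and $\cf(\mu)\neq \nu$. If $\cf(\mu)>\nu$, then $\cf(\mu)\neq\kappa$ because $\kappa$ is singular while $\cf(\mu)$ is regular. Hence either $\cf(\mu)<\kappa$, giving $B_1$, or $\cf(\mu)>\kappa$, giving $B_3$. Finally, $B_4$ is disjoint from $B_0\cup B_1\cup B_2\cup B_3$: an element $\mu\in B_0$ satisfies $\cf(\mu)\le\mu<\nu$, and the other three sets impose $\cf(\mu)\neq\nu$ by definition.

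Each piece is then handled by an earlier result. Lemma~\ref{lema_calibres_cof_no_es} gives $\C(X(\kappa))\cap B_4=\emptyset$. Proposition~\ref{prop_calibres_basico}(2) gives $B_0\cup B_3\subseteq\C(X(\kappa))$. Lemma~\ref{lema_calibres_singular_1} gives $B_1\subseteq \C(X(\kappa))$, and Lemma~\ref{lema_calibres_singular_2} gives $B_2\subseteq\C(X(\kappa))$. Combining these,
\[
\textstyle \C(X(\kappa))\subseteq \CN\setminus B_4=\bigcup_{i<4}B_i\subseteq \C(X(\kappa)),
\]
which is exactly the equality claimed in the theorem.

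The only delicate step is making sure the partition is exhaustive, in particular the boundary cases. One is $\cf(\mu)=\kappa$, which cannot occur because $\kappa$ is singular. Another is $\mu=\nu$, which falls into $B_4$. A third is $\mu$ singular with $\cf(\mu)<\nu<\mu$, which requires the strict inequality $\nu<\mu$ in Lemma~\ref{lema_calibres_singular_2}. Once these are settled, the theorem is a direct assembly of the lemmas and needs no new combinatorics.
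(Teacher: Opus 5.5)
Your proposal is correct and is essentially the paper's own proof. Your sets $B_0,B_1,B_2,B_3,B_4$ are exactly the paper's $A_0,A_2,A_4,A_1,A_5$, and you assemble the result from the same four results: Proposition~\ref{prop_calibres_basico}(2) and Lemmas~\ref{lema_calibres_cof_no_es}, \ref{lema_calibres_singular_1}, and~\ref{lema_calibres_singular_2}. Your explicit check that the pieces exhaust $\CN$ (using that $\nu$ is regular and that $\cf(\mu)\neq\kappa$ when $\kappa$ is singular) fills in a step the paper only asserts.
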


\begin{proof} Let
\begin{align*}
A_0 &:= \{\mu \in \CN : \mu < \cf(\kappa)\}, \\
A_1 &:= \{\mu \in \CN : \cf(\mu) > \kappa\}, \\
A_2 &:= \{\mu \in \CN : \cf(\kappa) < \cf(\mu) < \kappa\}, \\
A_4 &:= \{\mu \in \CN : \cf(\mu) < \cf(\kappa) < \mu\} \hspace{0.5em} \text{and} \\
A_5 &:= \{\mu \in \CN : \cf(\mu) = \cf(\kappa)\}.
\end{align*}
Observe that $A_5 \cap \bigcup_{i<5} A_i = \emptyset$ and that $\bigcup_{i<6} A_i = \CN$. Moreover, by Proposition~\ref{prop_calibres_basico}(2) and Lemmas~\ref{lema_calibres_cof_no_es}, \ref{lema_calibres_singular_1}, and~\ref{lema_calibres_singular_2}, we have
\[
\textstyle
\C(X) \subseteq \CN \setminus A_5 = \bigcup_{i<5} A_i \subseteq \C(X).
\]
Consequently, $\C(X) = \CN \setminus A_5$.
\end{proof}

Finally, we proceed to determine the calibers of $X(\lambda,\kappa)$ under the assumption $\lambda < \kappa$.

\begin{theorem}\label{thm_calibres_lambda_menor_kappa} If $\lambda < \kappa$, then $\C(X(\lambda,\kappa)) = \CN$.

\end{theorem}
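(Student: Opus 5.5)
By Proposition~\ref{prop_calibres_basico}(1), every $\mu<\kappa$ and every $\mu$ with $\cf(\mu)>\lambda$ is already a caliber of $X(\lambda,\kappa)$. So it remains to treat infinite cardinals $\mu\geq\kappa>\lambda$ with $\cf(\mu)\leq\lambda$; every such $\mu$ is singular. I will use Remark~\ref{rmk_calibres_fundamental}. Fix $\{A_\alpha:\alpha<\mu\}\subseteq[X]^{<\lambda}$, and for $x\in X$ put $I_x:=\{\alpha<\mu : x\notin A_\alpha\}$. It suffices to find a point $x$ with $|I_x|=\mu$, since then $J:=I_x$ satisfies $x\notin\bigcup_{\alpha\in J}A_\alpha$.

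The key device is a set $Z\in[X]^{\lambda^+}$, which exists because $\lambda<\kappa$ gives $\lambda^+\leq\kappa$. For every $\alpha$ we have $|Z\cap A_\alpha|<\lambda$, so $\alpha\in I_z$ for all but fewer than $\lambda$ points $z\in Z$. Choose a strictly increasing sequence $\{\mu_\eta:\eta<\cf(\mu)\}$ of regular cardinals with $\mu_0>\lambda^+$ and supremum $\mu$; this is possible since $\mu$ is singular and $\mu>\lambda^+$. For each $\eta$ define
\[
B_\eta:=\{z\in Z : |I_z\cap\mu_\eta|<\mu_\eta\}.
\]

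The main step is to show that $|B_\eta|<\lambda$ for every $\eta<\cf(\mu)$. Because $\mu_\eta$ is regular and $|B_\eta|\leq\lambda^+<\mu_\eta$, the set $\bigcup_{z\in B_\eta}(I_z\cap\mu_\eta)$ has cardinality less than $\mu_\eta$. Hence there is some $\alpha<\mu_\eta$ that lies in no $I_z$ with $z\in B_\eta$. For this $\alpha$, each $z\in B_\eta$ satisfies $z\in A_\alpha$, so $B_\eta\subseteq A_\alpha$ and therefore $|B_\eta|<\lambda$.

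It follows that $\bigl|\bigcup_{\eta<\cf(\mu)}B_\eta\bigr|\leq\cf(\mu)\cdot\lambda=\lambda<\lambda^+=|Z|$. So there is $z\in Z$ outside every $B_\eta$, which means $|I_z\cap\mu_\eta|=\mu_\eta$ for all $\eta<\cf(\mu)$. Consequently $|I_z|=\mu$, and $J:=I_z$ witnesses the condition of Remark~\ref{rmk_calibres_fundamental}.

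The delicate point is the counting that gives $|B_\eta|<\lambda$. It relies on two choices working together: the $\mu_\eta$ are taken regular and larger than $|Z|$, and $Z$ is taken of size $\lambda^+$ rather than $\lambda$, so that $Z$ survives the removal of $\cf(\mu)\cdot\lambda$ points. This is exactly where the hypothesis $\lambda<\kappa$ enters. When $\lambda=\kappa$ no such $Z$ exists, consistent with Lemma~\ref{lema_calibres_cof_no_es}.
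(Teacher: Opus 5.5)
Your proof is correct, but it takes a different route from the paper's. The paper handles the case $\cf(\mu)\le\lambda$ by reduction:
\begin{itemize}
\item it fixes $Y\in[X]^{\lambda^+}$ and notes that $\{A_\alpha\cap Y:\alpha<\mu\}\subseteq[Y]^{<\lambda^+}$;
\item it then invokes Theorem~\ref{thm_calibres_lambda_igual_kappa_regular} for the regular cardinal $\lambda^+$, so $\mu\in\C(X(\lambda^+))$ because $\cf(\mu)\neq\lambda^+$;
\item for singular $\mu>\lambda^+$, that theorem in turn rests on $\pi w(X(\lambda^+))=\lambda^+$ (Proposition~\ref{prop_FCT_kappa_regular}) and on the classical $\pi$-weight criterion of Proposition~\ref{prop_basico}(\ref{basico_pi_peso}).
\end{itemize}

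You prove the needed instance directly. Your observation that each $B_\eta$ is contained in a single $A_\alpha$ replaces the pigeonhole argument on a $\pi$-base. Your final count $\cf(\mu)\cdot\lambda<\lambda^+$ replaces the appeal to caliber $\cf(\mu)$.

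Your version is self-contained and uses no cardinal-function computations. Since it only needs $|B_\eta|\le\lambda$, it in fact reproves the relevant case of Theorem~\ref{thm_calibres_lambda_igual_kappa_regular}: $X(\lambda^+)$ has caliber $\mu$ for every $\mu>\lambda^+$ with $\cf(\mu)\le\lambda$.

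The paper's version is shorter given Section~\ref{secc_FCT}. More importantly, it is modular. The same reduction to $X(\lambda^+)$ is reused in Theorem~\ref{thm_hiphip_lambda_menor_kappa} to transfer calibers to $\F(X(\lambda,\kappa))$, using density together with the continuous identity $X(\lambda^+)\to X(\lambda,\lambda^+)$. Your pointwise counting argument does not immediately give that transfer.
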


\begin{proof} Let $\mu \in \CN$. If $\cf(\mu) > \lambda$, then Proposition~\ref{prop_calibres_basico}(1) implies that $\mu \in \C(X)$. On the other hand, when $\cf(\mu) \leq \lambda < \kappa$, let $\{A_\alpha : \alpha < \mu\} \subseteq [X]^{<\lambda}$, $\nu := \lambda^+$, and $Y \in [X]^{\nu}$. Since $\{A_\alpha \cap Y : \alpha < \mu\}$ is a subset of $[Y]^{<\nu}$ and the space $X(\nu)$ has caliber $\mu$, as $\cf(\mu) \leq \lambda < \nu$ (see Theorem~\ref{thm_calibres_lambda_igual_kappa_regular}), it follows that there exists $J \in [\mu]^{\mu}$ such that $\bigcup_{\alpha \in J} (A_\alpha \cap Y) \neq Y$, which implies that $\bigcup_{\alpha \in J} A_\alpha \neq X$. Therefore, $\mu$ is a caliber for $X$, by Remark~\ref{rmk_calibres_fundamental}.
\end{proof}

An immediate consequence of Theorem~\ref{thm_calibres_lambda_menor_kappa} is obtained by considering the case $\kappa > \omega$: in this situation, we obtain $\C(X(\omega,\kappa)) = \CN$. In other words, this theorem generalizes and, in particular, recovers the equality $\C(X(\omega,\kappa)) = \CN$ established in \cite{riotam2023}.

The main results of this section, namely, Theorems~\ref{thm_calibres_lambda_igual_kappa_regular}, \ref{thm_calibres_lambda_igual_kappa_singular}, and~\ref{thm_calibres_lambda_menor_kappa}, are summarized in the following result, which concludes this section.

\begin{theorem}
If $\lambda < \kappa$, then
\[
\C(X(\kappa)) = \{\mu \in \CN : \cf(\mu) \neq \cf(\kappa)\}
\quad \text{and} \quad
\C(X(\lambda,\kappa)) = \CN.
\]

\end{theorem}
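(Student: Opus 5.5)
The statement merely collects results already established, so the plan is to assemble them rather than prove anything new. The second equality, $\C(X(\lambda,\kappa)) = \CN$ for $\lambda<\kappa$, is exactly Theorem~\ref{thm_calibres_lambda_menor_kappa}, and I would cite it directly.

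For the first equality, $\C(X(\kappa)) = \{\mu \in \CN : \cf(\mu) \neq \cf(\kappa)\}$, I would split into two cases according to whether $\kappa$ is regular or singular.

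\emph{Regular $\kappa$.} Here $\cf(\kappa)=\kappa$. Theorem~\ref{thm_calibres_lambda_igual_kappa_regular} gives $\C(X(\kappa)) = \{\mu\in\CN : \cf(\mu)\neq\kappa\}$. Replacing $\kappa$ by $\cf(\kappa)$ in this set yields exactly the displayed formula.

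\emph{Singular $\kappa$.} This case is verbatim Theorem~\ref{thm_calibres_lambda_igual_kappa_singular}.

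There is no real obstacle here. The only point worth noting is that the hypothesis $\lambda<\kappa$ in the statement is needed only for the second equality. The first equality holds for every infinite $\kappa$, including $\kappa=\omega$, where it recovers $\C(X(\omega))=\UC$. If the hypothesis is read as forcing $\kappa>\omega$, the same two-case argument still applies unchanged.
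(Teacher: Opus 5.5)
Your proposal is correct and matches the paper, which presents this theorem as a summary of Theorems~\ref{thm_calibres_lambda_igual_kappa_regular}, \ref{thm_calibres_lambda_igual_kappa_singular}, and~\ref{thm_calibres_lambda_menor_kappa}. For regular $\kappa$ you correctly use $\cf(\kappa)=\kappa$ to rewrite the set, and you rightly note that the hypothesis $\lambda<\kappa$ matters only for the second equality.
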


%%%%%%%%%%%%%%%%%%%%%%%%%%%%%%%%%%%%%%%%%%%%%%%%%%%%%%%%%%%%%%
\section{Topological Hyperspaces}
%%%%%%%%%%%%%%%%%%%%%%%%%%%%%%%%%%%%%%%%%%%%%%%%%%%%%%%%%%%%%%

Basic material on hyperspaces can be found in \cite{ginsburg1975} and \cite{michael1951}. We now establish the terminology and notation needed to provide the appropriate framework and continue the development of the article.

Let $X$ be a topological space,
\begin{align*}
\CL(X) &:= \big\{ A \subseteq X : A \text{ is a nonempty closed subset of } X \big\} \hspace{0.5em} \text{and} \\
\F(X) &:= \big\{ A \in \CL(X) : A \text{ is finite} \big\}.
\end{align*}
Moreover, for each $\mathcal{U}\subseteq \Pot(X)$, let
\[\textstyle
\vt{\mathcal{U}} := \big\{ A \in \CL(X) : A \subseteq \bigcup \mathcal{U} \ \wedge \ \forall \, U \in \mathcal{U}\,(A \cap U \neq \emptyset) \big\}.
\]
The \emph{Vietoris topology} is the topology on $\CL(X)$ generated by the base
\[
\textstyle
\big\{ \vt{\mathcal{U}} : \mathcal{U} \in [\tau_X^+]^{<\omega} \big\}.
\]

For notational convenience, if $n \in \mathbb{N}$ and $U_1, \dots, U_n \in \tau_X$, we write
$\vt{U_1,\dots,U_n}$ instead of $\vt{\{U_1,\dots,U_n\}}$. For example, if $U,V \in \tau_X$, then
\[
\textstyle
\vt{U,V} = \big\{ A \in \CL(X) : A \subseteq U \cup V \ \wedge \ A \cap U \neq \emptyset \ \wedge \ A \cap V \neq \emptyset \big\}.
\]
Moreover, when referring to a hyperspace of $X$, say $\HH(X)$, it is understood that $\HH(X)$ is a subset of $\CL(X)$ equipped with the subspace topology inherited from the latter.

From this point onward, we use the notation $\vt{U_1,\dots,U_n}_{\HH}$ to denote the basic open set $\vt{U_1,\dots,U_n}$ intersected with an arbitrary hyperspace $\HH(X)$; that is,
\[
\vt{U_1,\dots,U_n}_{\HH} := \vt{U_1,\dots,U_n} \cap \HH(X).
\]
In particular, the symbols $\vt{U_1,\dots,U_n}_{\CL}$ and $\vt{U_1,\dots,U_n}$ denote the same set.

We now analyze the chain conditions in Vietoris hyperspaces. It is important to keep in mind that throughout this section, $X$ is assumed to be a $T_1$ topological space and $\HH(X)$ is a hyperspace of $X$ intermediate between $\F(X)$ and $\CL(X)$.

To begin, a well-known result of Michael, which states that $\F(X)$ is dense in $\CL(X)$, immediately yields part (1) of Proposition~\ref{prop_hip_Michael_Ginsburg}. On the other hand, \cite[Theorem~2.7, p.~78]{ginsburg1975} proves that the $\pi$-weight of $X$ coincides with that of $\CL(X)$. Using the same technique, one can obtain part (2) of the following result.

\begin{proposition}\label{prop_hip_Michael_Ginsburg} The following statements hold.

\begin{enumerate}
\item $\F(X)$ is dense in $\HH(X)$, and $\HH(X)$ is dense in $\CL(X)$.

\item $\pi w(\HH(X)) = \pi w(X)$.

\end{enumerate}

\end{proposition}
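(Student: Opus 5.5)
For part (1), I will first show that $\F(X)$ is dense in $\CL(X)$, following Michael's argument. Take a nonempty basic open set $\vt{U_1,\dots,U_n}$ with $U_i\in\tau_X^+$. It contains some closed set $A$, so $A\cap U_i\neq\emptyset$ for each $i$. Choose points $x_i\in A\cap U_i$ and put $F:=\{x_1,\dots,x_n\}$. Since $X$ is $T_1$, $F$ is closed, so $F\in\F(X)$. We have $F\subseteq A\subseteq\bigcup_i U_i$ and $F\cap U_i\neq\emptyset$ for each $i$, hence $F\in\vt{U_1,\dots,U_n}$.

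Now use $\F(X)\subseteq\HH(X)\subseteq\CL(X)$. Any nonempty open subset of $\HH(X)$ has the form $W\cap\HH(X)$ with $W$ open in $\CL(X)$. It contains a point of $\HH(X)$, so $W\neq\emptyset$, and therefore $W$ meets $\F(X)\subseteq\HH(X)$. Thus $\F(X)$ is dense in $\HH(X)$. Since $\F(X)\subseteq\HH(X)$ and $\F(X)$ is dense in $\CL(X)$, $\HH(X)$ is dense in $\CL(X)$ as well.

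For part (2), I will prove $\pi w(\HH(X))\le\pi w(X)$ and $\pi w(X)\le\pi w(\HH(X))$. We may assume $\pi w(X)$ is infinite, which holds since $X$ is infinite and $T_1$ in our applications; the finite case is a trivial separate check.

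\emph{Upper bound.} Let $\mathcal V$ be a $\pi$-base of $X$ with $|\mathcal V|=\pi w(X)$. I claim that
\[
\mathcal{B}:=\{\vt{V_1,\dots,V_n}_{\HH} : n\in\mathbb N,\ V_i\in\mathcal V\}
\]
is a $\pi$-base of $\HH(X)$, and $|\mathcal B|\le\pi w(X)$ since $\pi w(X)$ is infinite.

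\emph{Nonemptiness of $\mathcal B$.} Choose $x_i\in V_i$. Then $\{x_1,\dots,x_n\}\in\F(X)\subseteq\HH(X)$ lies in $\vt{V_1,\dots,V_n}$.

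\emph{Cofinality of $\mathcal B$.} Let $\vt{U_1,\dots,U_n}_{\HH}\neq\emptyset$. Each $U_i$ is nonempty, so choose $V_i\in\mathcal V$ with $V_i\subseteq U_i$. If $A\in\vt{V_1,\dots,V_n}_{\HH}$, then $A\subseteq\bigcup_i V_i\subseteq\bigcup_i U_i$ and $A\cap U_i\supseteq A\cap V_i\neq\emptyset$ for each $i$. Hence $\vt{V_1,\dots,V_n}_{\HH}\subseteq\vt{U_1,\dots,U_n}_{\HH}$.

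\emph{Lower bound.} Let $\mathcal W$ be a $\pi$-base of $\HH(X)$. Shrinking each member to a basic open subset does not raise the cardinality, so assume every member has the form $\vt{U_1,\dots,U_n}_{\HH}$ with all $U_i\in\tau_X^+$. For each such member $W$, put $O_W:=U_1$. This is a nonempty open set of $X$, and it is chosen so that every $A\in W$ meets $O_W$.

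Given $U\in\tau_X^+$, the set $\vt{U}_{\HH}$ is nonempty (it contains a singleton, which is closed by $T_1$). So there is $W\in\mathcal W$ with $W\subseteq\vt{U}_{\HH}$. I want $O_W\subseteq U$, and this is the main obstacle: the representation of $W$ is not canonical, and $U_1$ itself need not lie inside $U$.

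The fix is to use finite sets rather than the chosen $U_1$. Pick $y\in U_1$ and points $x_j\in U_j$ for $j\ge 2$. The finite set $\{y,x_2,\dots,x_n\}$ belongs to $W\subseteq\vt{U}$, so $y\in U$. Since $y$ was arbitrary, $U_1\subseteq U$. So $\{O_W:W\in\mathcal W\}$ is a $\pi$-base of $X$, and $\pi w(X)\le|\mathcal W|$.

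Combining the two bounds gives $\pi w(\HH(X))=\pi w(X)$.
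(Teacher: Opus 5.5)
Your argument is correct for infinite $X$ and follows the route the paper has in mind. The paper gives no proof of its own. It cites Michael's theorem that $\F(X)$ is dense in $\CL(X)$ for (1), and says (2) follows by Ginsburg's technique for $\pi w(\CL(X))=\pi w(X)$. That technique is exactly your pair of constructions: build $\vt{V_1,\dots,V_n}_{\HH}$ from a $\pi$-base of $X$, and conversely recover open subsets of $X$ from basic members of a $\pi$-base of $\HH(X)$ by testing them with finite sets.

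One correction to your aside. The finite case is not a trivial separate check; with $\pi w$ defined as in the paper (no $+\omega$), it is a genuine exception. A $T_1$ space $X$ with $2\le n<\omega$ points is discrete, and then $\HH(X)=\F(X)=\CL(X)$ is discrete with $2^n-1$ points. Hence $\pi w(\HH(X))=2^n-1\neq n=\pi w(X)$. So the infiniteness of $X$ is really needed, and the paper uses it implicitly; as you note, for $T_1$ spaces it forces $\pi w(X)\ge\omega$.
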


With regard to the chain conditions of the hyperspace $\HH(X)$, it is worth noting that, although Propositions~\ref{prop_hip_calibres_CL_X} and~\ref{prop_hip_calibres_UC_X_F} are stated for calibers, precalibers, and weak precalibers, in view of the similarity of the arguments, we provide details only for calibers.

\begin{proposition}\label{prop_hip_calibres_CL_X} If $\kappa$ is a caliber (resp., precaliber; weak precaliber) for $\CL(X)$, then $\kappa$ is a caliber (resp., precaliber; weak precaliber) for $X$.

\end{proposition}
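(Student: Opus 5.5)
Given $\kappa \in \C(\CL(X))$ and a family $\{U_\alpha : \alpha<\kappa\} \subseteq \tau_X^+$, I will pass to the family of basic Vietoris open sets $\{\vt{U_\alpha} : \alpha<\kappa\}$ in $\CL(X)$. Each $\vt{U_\alpha}$ is nonempty. Since $X$ is $T_1$, for any $x\in U_\alpha$ the singleton $\{x\}$ lies in $\CL(X)$, and clearly $\{x\}\in\vt{U_\alpha}$. So this is a legitimate family of $\kappa$ many nonempty open subsets of $\CL(X)$. (Alternatively, the union map is not available as a continuous map onto $X$, so I will not use Proposition~\ref{prop_basico}(\ref{basico_funcion}); the direct argument is simpler.)

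By the hypothesis that $\kappa$ is a caliber for $\CL(X)$, there is $J\in[\kappa]^\kappa$ and some $A\in\bigcap_{\alpha\in J}\vt{U_\alpha}$. By definition of $\vt{U_\alpha}$, we have $\emptyset\neq A\subseteq U_\alpha$ for every $\alpha\in J$. Picking any point $x\in A$ then gives $x\in\bigcap_{\alpha\in J}U_\alpha$, so $\kappa\in\C(X)$.

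The precaliber and weak precaliber cases use the same family and the key identity
\[
\vt{U_{\alpha_1}}\cap\dots\cap\vt{U_{\alpha_n}}\neq\emptyset \implies U_{\alpha_1}\cap\dots\cap U_{\alpha_n}\neq\emptyset.
\]
This holds because a common element $A$ is a nonempty set contained in every $U_{\alpha_i}$. Applying the identity to finite subfamilies of $\{\vt{U_\alpha}:\alpha\in J\}$ shows that centeredness transfers to $\{U_\alpha:\alpha\in J\}$. Applying it to pairs shows that linkedness transfers. There is no real obstacle here; the only point needing care is the nonemptiness of $\vt{U_\alpha}$, which is exactly where the $T_1$ assumption on $X$ is used.
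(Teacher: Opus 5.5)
Your proposal is correct and follows essentially the same route as the paper: pass to the basic Vietoris open sets $\vt{U_\alpha}$ and observe that a common element of $\vt{U_\alpha}$ for $\alpha \in J$ is a nonempty set contained in $\bigcap_{\alpha\in J}U_\alpha$. Your explicit use of the standing $T_1$ assumption (singletons are closed) to justify $\vt{U_\alpha}\neq\emptyset$, and your finite-subfamily version of the argument for the precaliber and weak precaliber cases, fill in details that the paper leaves implicit.
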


\begin{proof} Let $\kappa$ be a caliber for $\CL(X)$, and let $\{U_\alpha : \alpha<\kappa\}$ be a subset of $\tau_X^+$. Since $\{\vt{U_\alpha} : \alpha<\kappa\}$ is a subset of $\tau_{\CL(X)}^{+}$, there exists $J \in [\kappa]^{\kappa}$ such that $\{\vt{U_\alpha} : \alpha\in J\}$ has nonempty intersection. Thus, the condition $\bigcap_{\alpha \in J} \vt{U_\alpha} \neq \emptyset$ forces $\bigcap_{\alpha \in J} U_\alpha \neq \emptyset$. Consequently, $\kappa$ is a caliber for $X$.
\end{proof}

An immediate consequence of Propositions~\ref{prop_basico}(\ref{basico_denso}) and~\ref{prop_hip_Michael_Ginsburg}(1) is the following:

\begin{proposition}\label{prop_hip_relaciones} The following relations hold:
\begin{align*}
\C(\F(X))&\subseteq \C(\HH(X))\subseteq \C(\CL(X)), \\
\P(\F(X))&=\P(\HH(X))=\P(\CL(X)) \hspace{0.5em} \text{and} \\
\WP(\F(X))&=\WP(\HH(X))=\WP(\CL(X)).
\end{align*}

\end{proposition}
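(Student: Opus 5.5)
The plan is to derive everything from Proposition~\ref{prop_basico}(\ref{basico_denso}) applied twice, using the two density facts in Proposition~\ref{prop_hip_Michael_Ginsburg}(1).

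First, since $\F(X)$ is dense in $\HH(X)$, I view $\F(X)$ as a dense subspace of $\HH(X)$. Here one small point needs checking: the subspace topology that $\F(X)$ inherits from $\HH(X)$ must coincide with the one it inherits from $\CL(X)$. This holds because both are restrictions of the Vietoris topology on $\CL(X)$, and the basic open sets restrict as $\vt{U_1,\dots,U_n}_{\HH}\cap\F(X)=\vt{U_1,\dots,U_n}_{\F}$. With that in place, Proposition~\ref{prop_basico}(\ref{basico_denso}), with $D=\F(X)$ and ambient space $\HH(X)$, gives
\[
\C(\F(X))\subseteq\C(\HH(X)),\quad \P(\F(X))=\P(\HH(X)),\quad \WP(\F(X))=\WP(\HH(X)).
\]

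Second, $\HH(X)$ is dense in $\CL(X)$, again by Proposition~\ref{prop_hip_Michael_Ginsburg}(1), and it carries the subspace topology by the convention fixed for hyperspaces. Applying Proposition~\ref{prop_basico}(\ref{basico_denso}) with $D=\HH(X)$ inside $\CL(X)$ yields $\C(\HH(X))\subseteq\C(\CL(X))$, together with equality of precalibers and of weak precalibers.

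Chaining the two steps gives the displayed inclusions for calibers and the two chains of equalities for precalibers and weak precalibers. There is no real obstacle: the only point needing care is the consistency of subspace topologies in the first step, and that follows from transitivity of subspace topologies.
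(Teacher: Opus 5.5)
Your proposal is correct and follows the paper's own route: it applies Proposition~\ref{prop_basico}(\ref{basico_denso}) twice, once to $\F(X)$ dense in $\HH(X)$ and once to $\HH(X)$ dense in $\CL(X)$ (Proposition~\ref{prop_hip_Michael_Ginsburg}(1)), and then chains the results. Your remark that the subspace topologies on $\F(X)$ agree is a harmless extra detail that the paper leaves implicit.
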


Proposition~\ref{prop_hip_calibres_UC_X_F} aims to show that a broad class of chain conditions on $X$ always ``lifts'' to the hyperspace $\F(X)$:

\begin{proposition}\label{prop_hip_calibres_UC_X_F} Let $\kappa$ be a cardinal such that $\cf(\kappa)>\omega$. If $\kappa$ is a caliber (resp., precaliber; weak precaliber) for $X$, then $\kappa$ is a caliber (resp., precaliber; weak precaliber) for $\F(X)$.

\end{proposition}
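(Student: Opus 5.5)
Fix a caliber $\kappa$ of $X$ with $\cf(\kappa)>\omega$, and a family $\{\mathcal{O}_\alpha:\alpha<\kappa\}\subseteq\tau_{\F(X)}^+$. The plan is to reduce to a bounded number of coordinates and apply the caliber property of $X$ once per coordinate.

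First, shrink each $\mathcal{O}_\alpha$ to a basic open set of $\F(X)$. Choose $n_\alpha\in\mathbb{N}$ and $U^\alpha_1,\dots,U^\alpha_{n_\alpha}\in\tau_X^+$ with $\vt{U^\alpha_1,\dots,U^\alpha_{n_\alpha}}_{\F}\subseteq\mathcal{O}_\alpha$. Since $\kappa=\bigcup_{n}\{\alpha:n_\alpha=n\}$ and $\cf(\kappa)>\omega$, some $n$ has $J_0:=\{\alpha:n_\alpha=n\}$ of size $\kappa$. (If every piece had size $<\kappa$, the countable supremum of the sizes would be $<\kappa$ by uncountable cofinality, so the union would be too small.) This is the only place the hypothesis on $\cf(\kappa)$ is used, and I expect it to be the main point of the proof.

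Next, apply the caliber property coordinatewise $n$ times. From $\{U^\alpha_1:\alpha\in J_0\}$ we obtain $J_1\in[J_0]^\kappa$ and a point $x_1\in\bigcap_{\alpha\in J_1}U^\alpha_1$. Then from $\{U^\alpha_2:\alpha\in J_1\}$ (re-indexed by $\kappa$) we obtain $J_2\in[J_1]^\kappa$ and $x_2$, and so on up to $J_n$ and $x_n$. Put $A:=\{x_1,\dots,x_n\}$. This is a finite set, so it is closed because $X$ is $T_1$, and hence $A\in\F(X)$. For every $\alpha\in J_n$ we have $x_i\in U^\alpha_i$ for each $i$. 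Therefore $A\subseteq\bigcup_i U^\alpha_i$ and $A$ meets every $U^\alpha_i$, that is, $A\in\vt{U^\alpha_1,\dots,U^\alpha_n}_{\F}\subseteq\mathcal{O}_\alpha$. So $J:=J_n$ witnesses the caliber property for $\F(X)$.

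For precalibers and weak precalibers, run the same refinement, obtaining at step $i$ a family $\{U^\alpha_i:\alpha\in J_i\}$ that is centered (respectively linked). Given finitely many (respectively two) indices in $J_n$, choose for each $i$ a point $x_i$ in the intersection of the corresponding $U^\alpha_i$. The set $\{x_1,\dots,x_n\}$ then lies in all the chosen basic sets, so $\{\mathcal{O}_\alpha:\alpha\in J_n\}$ is centered (respectively linked).
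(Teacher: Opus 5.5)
Your proof is correct and is essentially the paper's: the same pigeonhole on the number of Vietoris components (the only use of $\cf(\kappa)>\omega$), followed by producing a finite set $\{x_1,\dots,x_n\}$ that lies in all the selected basic sets. The only difference is that the paper obtains all coordinates at once by applying finite productivity of calibers (Proposition~\ref{prop_basico}(\ref{basico_producto})) to the boxes $V(\alpha,1)\times\dots\times V(\alpha,m)$ in $X^m$, whereas you inline that productivity argument as $n$ successive refinements, which has the small advantage of making the precaliber and weak precaliber cases explicit.
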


\begin{proof} Let $\{U_\alpha : \alpha<\kappa\}$ be a collection of basic open sets of $\F(X)$. For each $n\in \mathbb{N}$, consider the collection
\[\textstyle
J_n := \big\{\alpha<\kappa : \exists V_1,\dots V_n\in \tau_X^+\big(U_\alpha = \vt{V_1,\dots V_n}_{\F}\big)\big\}.
\]
Since $\kappa = \bigcup_{n \in \mathbb{N}} J_n$, the relation $\cf(\kappa)>\omega$ yields some $m\in \mathbb{N}$ such that $|J_m| = \kappa$. For each $\alpha \in J_m$, let $V(\alpha,1),\dots,V(\alpha,m) \in \tau_X^+$ be such that \[
U_\alpha = \vt{V(\alpha,1),\dots,V(\alpha,m)}_{\F}.
\] Also, let
\[\mathcal{V} := \{V(\alpha,1)\times \dots \times V(\alpha,m) : \alpha\in J_m\}.
\]

Now, since $\kappa$ is a caliber for $X$, Proposition~\ref{prop_basico}(\ref{basico_producto}) implies that $\kappa$ is a caliber for $X^m$. Thus, since $\mathcal{V}$ is a subset of $\tau_{X^m}^+$, there exists $J\in [J_m]^{\kappa}$ such that the family $\left\{V(\alpha,1)\times \dots \times V(\alpha,m) : \alpha\in J\right\}$ has a nonempty intersection. Therefore, if $(x_1,\dots,x_n)\in \bigcap \{V(\alpha,1)\times \dots \times V(\alpha,m) : \alpha\in J\}$, it follows that $\{x_1,\dots,x_n\} \in \bigcap_{\alpha \in J} U_\alpha$. Consequently, $\kappa$ is a caliber for $\F(X)$.
\end{proof}

The results established in Propositions~\ref{prop_hip_calibres_CL_X}, \ref{prop_hip_relaciones}, and~\ref{prop_hip_calibres_UC_X_F} can be represented graphically in the diagram in Figure~\ref{figura_hiperespacios}. The relation $A \to B$ means that $A \subseteq B$:

\begin{figure}[H]
\begin{center}
\begin{tikzcd}[scale cd=0.85]

\C(X) \cap \UC \arrow[d]{}{} \arrow[r]{}{} & \C(\F(X)) \arrow[d]{}{} \arrow[r]{}{} & \C(\HH(X)) \arrow[d]{}{} \arrow[r]{}{} & \C(\CL(X)) \arrow[d]{}{} \arrow[r]{}{} & \C(X) \arrow[d]{}{} \\

\P(X) \cap \UC \arrow[d]{}{} \arrow[r]{}{} & \P(\F(X)) \arrow[d]{}{} \arrow[r,leftrightarrow]{}{} & \P(\HH(X)) \arrow[d]{}{} \arrow[r,leftrightarrow]{}{} & \P(\CL(X)) \arrow[d]{}{} \arrow[r]{}{} & \P(X) \arrow[d]{}{} \\

\WP(X) \cap \UC \arrow[r]{}{} & \WP(\F(X))  \arrow[r,leftrightarrow]{}{} & \WP(\HH(X))  \arrow[r,leftrightarrow]{}{} & \WP(\CL(X))  \arrow[r]{}{} & \WP(X) \\

\end{tikzcd}
\end{center}
\vspace{-10mm}
\caption{Chain conditions in Vietoris hyperspaces.}\label{figura_hiperespacios}
\end{figure}

We summarize in a single result the information contained in the first row of the preceding diagram, so that we can refer to it later.

\begin{corollary}\label{cor_hip_primer_renglon}
The following relations hold:
\[
\C(X) \cap \UC \subseteq \C(\F(X)) \subseteq \C(\HH(X)) \subseteq \C(\CL(X)) \subseteq \C(X).
\]
\end{corollary}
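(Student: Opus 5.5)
This corollary is a bookkeeping consequence of results already established. I would prove it by chaining four inclusions, each of which is either an earlier proposition or a direct specialization of one. The standing assumptions are that $X$ is $T_1$ and $\F(X)\subseteq\HH(X)\subseteq\CL(X)$.

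\textbf{First inclusion.} For $\C(X)\cap\UC\subseteq\C(\F(X))$, take $\kappa\in\C(X)$ with $\cf(\kappa)>\omega$. Proposition~\ref{prop_hip_calibres_UC_X_F}, in its caliber version, then gives $\kappa\in\C(\F(X))$ directly. Its proof reduces an arbitrary family of nonempty open sets to basic ones, since shrinking sets preserves nonempty intersections. It then uses $\cf(\kappa)>\omega$ to fix the number $m$ of Vietoris factors on a subfamily of size $\kappa$. Finally, it passes to the product $X^m$, using Proposition~\ref{prop_basico}(\ref{basico_producto}).

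\textbf{Middle inclusions.} For $\C(\F(X))\subseteq\C(\HH(X))\subseteq\C(\CL(X))$, I would invoke Proposition~\ref{prop_hip_relaciones}. That result combines Proposition~\ref{prop_hip_Michael_Ginsburg}(1) with Proposition~\ref{prop_basico}(\ref{basico_denso}). By the former, $\F(X)$ is dense in $\HH(X)$ and $\HH(X)$ is dense in $\CL(X)$. By the latter, calibers of a dense subspace are calibers of the whole space. One small check is needed: density of $\F(X)$ in $\HH(X)$ uses the subspace topology on $\HH(X)$. This follows from Michael's density of $\F(X)$ in $\CL(X)$, because every nonempty open set of $\HH(X)$ contains some nonempty $\vt{U_1,\dots,U_n}_{\HH}$, and that set contains a finite set.

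\textbf{Last inclusion.} For $\C(\CL(X))\subseteq\C(X)$, I would use Proposition~\ref{prop_hip_calibres_CL_X}. Given nonempty open sets $U_\alpha$, $\alpha<\kappa$, of $X$, the sets $\vt{U_\alpha}$ are nonempty open subsets of $\CL(X)$; they are nonempty because singletons are closed in a $T_1$ space. Let $A$ be a closed set in $\bigcap_{\alpha\in J}\vt{U_\alpha}$. Then $A\subseteq U_\alpha$ for every $\alpha\in J$, and $A$ is nonempty, so any point of $A$ lies in $\bigcap_{\alpha\in J}U_\alpha$.

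Concatenating the four inclusions gives the displayed chain. I expect no genuine obstacle. The only point needing care is that the first inclusion really requires the uncountable-cofinality hypothesis, which is exactly why the statement intersects $\C(X)$ with $\UC$.
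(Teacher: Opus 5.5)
Your proposal is correct and follows the paper's route exactly. The paper presents the corollary as the first row of Figure~\ref{figura_hiperespacios}, obtained by concatenating three results: Proposition~\ref{prop_hip_calibres_UC_X_F} for the first inclusion, Proposition~\ref{prop_hip_relaciones} for the middle inclusions (via density and Proposition~\ref{prop_basico}(\ref{basico_denso})), and Proposition~\ref{prop_hip_calibres_CL_X} for the last inclusion. Your extra checks are valid details that the paper leaves implicit: that $\F(X)$ is dense in the subspace $\HH(X)$, and that each $\vt{U_\alpha}$ is nonempty because singletons are closed in a $T_1$ space.
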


The information presented in this section, summarized in the diagram in Figure~\ref{figura_hiperespacios}, allows us to completely determine the chain conditions of any hyperspace intermediate between $\F(X)$ and $\CL(X)$, provided that $X$ is an infinite $T_2$ space. Indeed, it suffices to observe that if $X$ is infinite and Hausdorff, then Proposition~\ref{prop_basico}(\ref{basico_T2}) guarantees that $\WP(X) \subseteq \UC$. Consequently, we obtain the following theorem.

\begin{theorem}\label{thm_hip_condiciones_cadena_H}
If $X$ is an infinite $T_2$ space, then
\[
\C(X) = \C(\HH(X)), \quad
\P(X) = \P(\HH(X)) \quad \text{and} \quad
\WP(X) = \WP(\HH(X)).
\]

\end{theorem}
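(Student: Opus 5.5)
The plan is to combine Proposition~\ref{prop_basico}(\ref{basico_T2}) with the diagram in Figure~\ref{figura_hiperespacios}, treating each of the three rows separately. Since $X$ is infinite and Hausdorff, Proposition~\ref{prop_basico}(\ref{basico_T2}) gives $\WP(X) \subseteq \UC$. Every caliber is a precaliber and every precaliber is a weak precaliber, so also
\[
\C(X) \subseteq \P(X) \subseteq \WP(X) \subseteq \UC .
\]
Consequently $\C(X)\cap\UC = \C(X)$, $\P(X)\cap\UC=\P(X)$ and $\WP(X)\cap\UC=\WP(X)$. Note that $X$ is $T_1$, so the standing hypothesis of the section holds and all the results of the section apply.

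For calibers, Corollary~\ref{cor_hip_primer_renglon} reads
\[
\C(X)\cap\UC \subseteq \C(\F(X)) \subseteq \C(\HH(X)) \subseteq \C(\CL(X)) \subseteq \C(X).
\]
Since the leftmost set equals $\C(X)$, this chain closes up and all inclusions are equalities; in particular $\C(\HH(X))=\C(X)$.

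For precalibers I argue the same way. Proposition~\ref{prop_hip_calibres_UC_X_F} applied to precalibers gives $\P(X)\cap\UC\subseteq\P(\F(X))$. Proposition~\ref{prop_hip_relaciones} gives $\P(\F(X))=\P(\HH(X))=\P(\CL(X))$. Proposition~\ref{prop_hip_calibres_CL_X} gives $\P(\CL(X))\subseteq\P(X)$. Together these yield
\[
\P(X)=\P(X)\cap\UC\subseteq \P(\HH(X))\subseteq \P(X).
\]
The argument for weak precalibers is literally the same, using the weak-precaliber versions of these three propositions.

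There is no real obstacle in this proof. The only point needing care is that Propositions~\ref{prop_hip_calibres_CL_X} and~\ref{prop_hip_calibres_UC_X_F} were proved in detail only for calibers. For the precaliber and weak precaliber cases I rely on their stated versions. If needed, one checks that the proof of Proposition~\ref{prop_hip_calibres_UC_X_F} goes through, using that finite products preserve the (weak) precaliber property for cardinals of uncountable cofinality. For finite products one can also argue directly, by repeatedly passing to subfamilies of size $\kappa$, one coordinate at a time.
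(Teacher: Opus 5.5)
Your proposal is correct and matches the paper's argument: the paper also obtains the theorem from the diagram in Figure~\ref{figura_hiperespacios} together with $\WP(X)\subseteq\UC$ (Proposition~\ref{prop_basico}(\ref{basico_T2})), which gives $\C(X)\subseteq\P(X)\subseteq\WP(X)\subseteq\UC$, so each row of the diagram closes up into equalities. Your remark on checking the precaliber and weak-precaliber versions of Proposition~\ref{prop_hip_calibres_UC_X_F} by finite productivity (passing to subfamilies one coordinate at a time) is sound, and that productivity in fact needs no cofinality restriction.
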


%%%%%%%%%%%%%%%%%%%%%%%%%%%%%%%%%%%%%%%%%%%%%%%%%%%%%%%%%%%%%%
\section{Hyperspaces over Hyperconnected Spaces}
%%%%%%%%%%%%%%%%%%%%%%%%%%%%%%%%%%%%%%%%%%%%%%%%%%%%%%%%%%%%%%

The Hausdorff assumption in Theorem~\ref{thm_hip_condiciones_cadena_H} leaves open the question of what happens when the space $X$ is not $T_2$. The goal of this section is to show that, in the context of canonical hyperconnected spaces, an analogous result to that theorem can be obtained even without the aforementioned separation property (see Theorem~\ref{thm_hiphip_calibres_resumen}, stated below).

First, since the proof of Lemma~\ref{lema_hiphip_denso_hiperconexo} is elementary, we omit it, but we state the result in an appropriate setting for later use.

\begin{lemma}\label{lema_hiphip_denso_hiperconexo} The following statements are equivalent for every topological space $X$.

\begin{enumerate}
\item $X$ is hyperconnected.
\item Every dense subspace of $X$ is hyperconnected.
\item There exists a dense subspace of $X$ that is hyperconnected.
\end{enumerate}

\end{lemma}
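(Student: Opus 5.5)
The plan is to prove the cycle of implications $(1)\Rightarrow(2)\Rightarrow(3)\Rightarrow(1)$. For $(1)\Rightarrow(2)$, let $D$ be dense in the hyperconnected space $X$ and take nonempty open subsets of $D$. They have the form $U\cap D$ and $V\cap D$ with $U,V\in\tau_X^+$; these are nonempty because they contain points of $D$. Hyperconnectedness of $X$ makes $U\cap V$ a nonempty open subset of $X$. Density of $D$ then gives $(U\cap V)\cap D\neq\emptyset$, that is, $(U\cap D)\cap(V\cap D)\neq\emptyset$. So $D$ is hyperconnected.

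The implication $(2)\Rightarrow(3)$ is immediate, since $X$ itself is a dense subspace of $X$; in particular the family of dense subspaces is nonempty.

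For $(3)\Rightarrow(1)$, fix a dense hyperconnected subspace $D$ and take $U,V\in\tau_X^+$. Density of $D$ gives $U\cap D\neq\emptyset$ and $V\cap D\neq\emptyset$, and both are open in $D$. Hyperconnectedness of $D$ then yields $(U\cap D)\cap(V\cap D)\neq\emptyset$, hence $U\cap V\neq\emptyset$.

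None of these steps poses a real obstacle. The only point needing care is to use density in the right direction in each implication: in $(1)\Rightarrow(2)$ density is applied to the open set $U\cap V$, and in $(3)\Rightarrow(1)$ it is applied separately to $U$ and to $V$ to guarantee that their traces on $D$ are nonempty.
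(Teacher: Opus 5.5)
Your proof is correct and complete, and every step checks out against the paper's definition of hyperconnectedness. The paper omits the argument as elementary, and your cycle $(1)\Rightarrow(2)\Rightarrow(3)\Rightarrow(1)$, using density on $U\cap V$ in one direction and on $U$ and $V$ separately in the other, is exactly the routine verification it leaves to the reader.
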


From Proposition~\ref{prop_hiphip_hip_hip} through Theorem~\ref{thm_hiphip_infinito_numerable}, $X$ is assumed to be a $T_1$ topological space and $\HH(X)$ is a hyperspace of $X$ intermediate between $\F(X)$ and $\CL(X)$.

\begin{proposition}\label{prop_hiphip_hip_hip} If $X$ is a hyperconnected space, then $\HH(X)$ is hyperconnected.

\end{proposition}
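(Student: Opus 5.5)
By Lemma~\ref{lema_hiphip_denso_hiperconexo} and Proposition~\ref{prop_hip_Michael_Ginsburg}(1), it suffices to show that $\F(X)$ is hyperconnected. Indeed, $\F(X)$ is dense in $\HH(X)$, so item (3) of the lemma applied to $\HH(X)$ with the dense subspace $\F(X)$ gives the conclusion. Hyperconnectedness only needs to be tested on basic open sets, since every nonempty open set contains a nonempty basic one. So it is enough to show that any two nonempty basic open sets $\vt{U_1,\dots,U_n}_{\F}$ and $\vt{V_1,\dots,V_m}_{\F}$, with all $U_i,V_j\in\tau_X^+$, intersect.

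To exhibit a common element, I would use the hyperconnectedness of $X$. For each pair $(i,j)$ with $1\le i\le n$ and $1\le j\le m$, the sets $U_i$ and $V_j$ are nonempty open subsets of $X$, so $U_i\cap V_j\neq\emptyset$; choose a point $x_{ij}\in U_i\cap V_j$. Let $F:=\{x_{ij}: i\le n,\ j\le m\}$. Then $F$ is finite and nonempty. Since $X$ is $T_1$, $F$ is closed, so $F\in\F(X)\subseteq\HH(X)$.

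Next I would check the two membership conditions.
\begin{enumerate}
\item $F\subseteq \bigcup_i U_i$, because each $x_{ij}\in U_i$. Also $F\cap U_i\ni x_{i1}$ for every $i$. Hence $F\in\vt{U_1,\dots,U_n}$.
\item $F\subseteq\bigcup_j V_j$, because each $x_{ij}\in V_j$. Also $F\cap V_j\ni x_{1j}$ for every $j$. Hence $F\in\vt{V_1,\dots,V_m}$.
\end{enumerate}
Therefore $F$ lies in the intersection of the two basic sets, which is then nonempty. This proves that $\F(X)$ is hyperconnected.

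In fact the same element $F$ lies in $\HH(X)$, so the argument works directly for $\HH(X)$ without passing through density. I would present the direct version and mention the lemma only as an alternative. The only point needing care is realizing that using a single point per $U_i$ fails the containment $F\subseteq\bigcup_j V_j$. The grid of points $x_{ij}$ is what handles both containments at once, so I do not expect any real obstacle.
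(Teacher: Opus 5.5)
Your proof is correct and follows essentially the paper's route: reduce to $\F(X)$ via Proposition~\ref{prop_hip_Michael_Ginsburg}(1) and Lemma~\ref{lema_hiphip_denso_hiperconexo}, then exhibit a finite set lying in both basic open sets. The only difference is the witness, and your worry about needing a grid is unfounded: the paper takes a single point $x\in\bigcap(\mathcal{U}\cup\mathcal{V})$, which exists because any finite family of nonempty open sets in a hyperconnected space has nonempty intersection, so $\{x\}$ already works and your grid $\{x_{ij}\}$ (valid, using only pairwise intersections) is more than is needed.
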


\begin{proof} By Proposition~\ref{prop_hip_Michael_Ginsburg}(1) and Lemma~\ref{lema_hiphip_denso_hiperconexo}, it suffices to show that $\F(X)$ is hyperconnected. If $\mathcal{U},\mathcal{V} \in [\tau_X^+]^{<\omega}\setminus\{\emptyset\}$, then, since $\mathcal{U}\cup\mathcal{V} \subseteq \tau_X^+$ and $X$ is hyperconnected, there exists a point $x\in \bigcap\left(\mathcal{U}\cup\mathcal{V}\right)$; in particular, $\{x\}$ is an element of $\vt{\mathcal{U}}_{\F} \cap \vt{\mathcal{V}}_{\F}$.
\end{proof}

Thus, a combination of Propositions~\ref{prop_basico}(\ref{basico_hiperconexo}) and~\ref{prop_hiphip_hip_hip} yields the following:

\begin{corollary}\label{cor_hiphip_hip_hip} If $X$ is a hyperconnected space, then
\[
\P(\HH(X)) = \WP(\HH(X)) = \P(X) = \WP(X) = \CN.
\]

\end{corollary}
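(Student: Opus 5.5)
The statement to prove is Corollary~\ref{cor_hiphip_hip_hip}. I would prove it by two direct applications of Proposition~\ref{prop_basico}(\ref{basico_hiperconexo}), which says that every hyperconnected space has all infinite cardinals as precalibers and weak precalibers.

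First, $X$ is hyperconnected by hypothesis. Applying Proposition~\ref{prop_basico}(\ref{basico_hiperconexo}) to $X$ gives $\P(X)=\WP(X)=\CN$.

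Second, Proposition~\ref{prop_hiphip_hip_hip} shows that $\HH(X)$ is hyperconnected. Recall that $X$ is $T_1$ and $\F(X)\subseteq\HH(X)\subseteq\CL(X)$ throughout this part of the section. Applying Proposition~\ref{prop_basico}(\ref{basico_hiperconexo}) again, now to $\HH(X)$, yields $\P(\HH(X))=\WP(\HH(X))=\CN$.

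Chaining the two sets of equalities gives the full string in the statement. As a consistency check, Proposition~\ref{prop_hip_relaciones} and the inclusions $\P(\CL(X))\subseteq\P(X)$ and $\WP(\CL(X))\subseteq\WP(X)$ from Proposition~\ref{prop_hip_calibres_CL_X} do not conflict with this conclusion; they are not needed for the argument.

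I expect no real obstacle. The only point to watch is that Proposition~\ref{prop_basico}(\ref{basico_hiperconexo}) has no separation or cardinality hypotheses, so it applies verbatim to the hyperspace $\HH(X)$, which need not be $T_1$ in any useful sense.
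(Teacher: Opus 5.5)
Your proof is correct and is exactly the paper's argument: Proposition~\ref{prop_basico}(\ref{basico_hiperconexo}) is applied once to $X$ and once to $\HH(X)$, and the hyperconnectedness of $\HH(X)$ comes from Proposition~\ref{prop_hiphip_hip_hip}. One small aside: your closing remark is off, since $\CL(X)$, and hence $\HH(X)$, is in fact $T_1$ whenever $X$ is, although, as you say, this plays no role in the argument.
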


On the other hand, when $X$ is countably infinite, a relation similar to those stated in Corollary~\ref{cor_hiphip_hip_hip} can be obtained in the case of calibres. Indeed, by Proposition~\ref{prop_basico}(\ref{basico_T1}), every infinite countable $T_1$ space satisfies
\[
\C(X)=\UC.
\]
The statement below follows from this fact and Corollary~\ref{cor_hip_primer_renglon}.

\begin{theorem}\label{thm_hiphip_infinito_numerable} If $X$ is countably infinite, then
\[
\C(\HH(X)) = \C(X) = \UC.
\]

\end{theorem}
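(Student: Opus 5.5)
The statement follows by squeezing $\C(\HH(X))$ between two copies of $\UC$, using Corollary~\ref{cor_hip_primer_renglon}. That corollary gives
\[
\C(X) \cap \UC \subseteq \C(\F(X)) \subseteq \C(\HH(X)) \subseteq \C(\CL(X)) \subseteq \C(X).
\]
It therefore suffices to know that $\C(X) = \UC$. Then the leftmost term is $\UC \cap \UC = \UC$ and the rightmost term is $\UC$, so every set in the chain, and in particular $\C(\HH(X))$, equals $\UC$.

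To obtain $\C(X) = \UC$, I invoke Proposition~\ref{prop_basico}(\ref{basico_T1}). Its hypotheses hold here: $X$ is countably infinite by assumption, and $X$ is $T_1$ by the standing convention stated before Proposition~\ref{prop_hiphip_hip_hip}. The hyperspace $\HH(X)$ is assumed to be intermediate between $\F(X)$ and $\CL(X)$, which is exactly the setting in which Corollary~\ref{cor_hip_primer_renglon} was established, so no further verification is needed there.

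The substantive input is the lifting step $\C(X)\cap\UC \subseteq \C(\F(X))$, i.e.\ Proposition~\ref{prop_hip_calibres_UC_X_F}, and it is already proved. So I expect no real obstacle; the only thing to check is that the countable $T_1$ hypothesis is what makes the uncountable-cofinality restriction in that proposition harmless, since here all calibers of $X$ already lie in $\UC$. Notice also that hyperconnectedness plays no role, so the argument applies to every countably infinite $T_1$ space, and in particular to $X(\omega)$.
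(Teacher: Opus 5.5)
Your proof is correct and follows the paper's argument exactly. The paper likewise obtains $\C(X)=\UC$ from Proposition~\ref{prop_basico}(\ref{basico_T1}), using the standing $T_1$ assumption. It then squeezes $\C(\HH(X))$ between $\C(X)\cap\UC$ and $\C(X)$ via Corollary~\ref{cor_hip_primer_renglon}.
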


Our next goal is to study the chain conditions of hyperspaces over canonical hyperconnected spaces, taking advantage of the substantial information about them obtained in Sections~\ref{secc_FCT} and~\ref{secc_calibres}.

From now on, unless explicitly stated otherwise, $\kappa$ and $\lambda$ denote infinite cardinal numbers such that $\lambda \leq \kappa$. In addition, we assume that the hyperspace $\HH(X(\lambda,\kappa))$ satisfies
\[
\F(X(\lambda,\kappa)) \subseteq \HH(X(\lambda,\kappa)) \subseteq \CL(X(\lambda,\kappa)).
\]

By Corollary~\ref{cor_hiphip_hip_hip}, we have
\[
\P(\HH(X(\lambda,\kappa))) = \WP(\HH(X(\lambda,\kappa))) = \P(X(\lambda,\kappa)) = \WP(X(\lambda,\kappa)) = \CN.
\]
Therefore, once again, the problem of interest is to calculate the calibers of the hyperspaces of canonical hyperconnected spaces.

The following preliminary result, which is closely related to the pseudocharacter of $X(\lambda,\kappa)$ and Proposition~\ref{prop_FCT_pseudocaracter_1}, is established in Proposition~\ref{prop_hiphip_pseudocaracter}.

\begin{proposition}\label{prop_hiphip_pseudocaracter} The following relation holds:
\[\textstyle
\psi(X(\lambda,\kappa)) = \min \big\{|\mathcal{U}| : \mathcal{U} \subseteq \tau_{\HH(X)}^+ \ \wedge \ \bigcap \mathcal{U} = \emptyset\big\}.
\]
\end{proposition}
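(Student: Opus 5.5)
Let $\mu$ denote the minimum on the right-hand side and write $\psi := \psi(X(\lambda,\kappa))$. By Proposition~\ref{prop_FCT_pseudocaracter_1}, $\psi$ is the least cardinality of a family $\mathcal{V} \subseteq \tau_X^+$ with $\bigcap \mathcal{V} = \emptyset$. The plan is to prove the two inequalities $\mu \leq \psi$ and $\psi \leq \mu$ separately, passing between open sets of $X$ and basic Vietoris open sets.

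\textbf{Step 1: $\mu \leq \psi$.} Take $\mathcal{V} \subseteq \tau_X^+$ with $|\mathcal{V}| = \psi$ and $\bigcap \mathcal{V} = \emptyset$. For each $V \in \mathcal{V}$ the set $\vt{V}_{\HH}$ is nonempty, since it contains every singleton $\{x\}$ with $x \in V$, and $\F(X) \subseteq \HH(X)$. Suppose some $A \in \HH(X)$ lay in every $\vt{V}_{\HH}$. Then $A \subseteq V$ for all $V \in \mathcal{V}$, so $A \subseteq \bigcap \mathcal{V} = \emptyset$, contradicting $A \neq \emptyset$. Hence $\{\vt{V}_{\HH} : V \in \mathcal{V}\}$ witnesses $\mu \leq \psi$.

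\textbf{Step 2: $\psi \leq \mu$.} Let $\mathcal{U} \subseteq \tau_{\HH(X)}^+$ with $|\mathcal{U}| = \mu$ and $\bigcap \mathcal{U} = \emptyset$.

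First shrink each $U \in \mathcal{U}$ to a nonempty basic set $\vt{V_1^U, \dots, V_{n_U}^U}_{\HH}$ with each $V_i^U \in \tau_X^+$. The intersection of the shrunken sets is still empty, and the indexing family has size at most $\mu$.

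Next consider the family $\mathcal{W} := \{V_i^U : U \in \mathcal{U},\ 1 \leq i \leq n_U\}$. It consists of nonempty open subsets of $X$, and it is infinite or finite with $|\mathcal{W}| \leq \mu \cdot \omega$.

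Suppose there were a point $x \in \bigcap \mathcal{W}$. Then $\{x\} \in \HH(X)$, and for every $U$ we have $\{x\} \subseteq \bigcup_i V_i^U$ and $\{x\} \cap V_i^U \neq \emptyset$ for each $i$. Thus $\{x\}$ would lie in every basic set, hence in $\bigcap \mathcal{U}$, a contradiction. Therefore $\bigcap \mathcal{W} = \emptyset$, and Proposition~\ref{prop_FCT_pseudocaracter_1} gives $\psi \leq |\mathcal{W}| \leq \mu \cdot \omega$.

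\textbf{Step 3: the main obstacle.} Step 2 only yields $\psi \leq \mu \cdot \omega$, so it remains to rule out a finite $\mu$. Since $X(\lambda,\kappa)$ is hyperconnected, Proposition~\ref{prop_hiphip_hip_hip} makes $\HH(X)$ hyperconnected. Moreover, finitely many nonempty basic open sets of $\HH(X)$ always have a common point: the intersection of all the finitely many $V_i^U$ involved is a finite intersection of co-small sets in $X$, hence nonempty, and any singleton from it works. So $\mu$ is infinite, and then $\mu \cdot \omega = \mu$.

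Combining the two inequalities gives $\psi(X(\lambda,\kappa)) = \mu$.
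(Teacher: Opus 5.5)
Your proof is correct and follows essentially the same route as the paper. The first half (the family $\{\vt{V}_{\HH} : V \in \mathcal{V}\}$) is identical. For the second half, the paper argues contrapositively: it shows that any fewer than $\psi(X)$ basic open sets have a common singleton, using hyperconnectedness in the finite case and Proposition~\ref{prop_FCT_pseudocaracter_1} in the infinite case. You instead derive $\psi \leq \mu\cdot\omega$ and rule out finite $\mu$ separately. These are the same ingredients, and you are slightly more explicit about shrinking to basic sets.
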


\begin{proof} Let $\nu := \psi(X)$ and $Y := \HH(X)$. First, observe that if $\mathcal{V} \subseteq \tau_X^+$ satisfies $|\mathcal{V}| = \nu$ and $\bigcap \mathcal{V} = \emptyset$, then $\mathcal{U} := \{\langle V \rangle_{\HH} : V \in \mathcal{V}\}$ satisfies $\mathcal{U} \subseteq \tau_Y^+$, $|\mathcal{U}| = \nu$, and $\bigcap \mathcal{U} = \emptyset$.

Now, let $\theta < \nu$ be a positive cardinal, and let $\{U_\alpha : \alpha < \theta\}$ be a collection of basic open sets of $Y$. If $\theta < \omega$, then the fact that $Y$ is hyperconnected (see Proposition~\ref{prop_hiphip_hip_hip}) implies that $\bigcap_{\alpha<\theta} U_\alpha \neq \emptyset$. Otherwise, if $\theta \geq \omega$, for each $\alpha < \theta$, there exist $n_\alpha \in \mathbb{N}$ and $\{U(\alpha,i) : 1 \leq i \leq n_\alpha\} \subseteq \tau_X^+$ such that
$U_\alpha = \langle U(\alpha,1),\ldots,U(\alpha,n_\alpha)\rangle_{\HH}$.
Then, since
\[
|\{U(\alpha,i) : \alpha < \theta \ \wedge\ 1 \leq i \leq n_\alpha\}| \leq \theta < \nu,
\]
we have
\[\textstyle
\bigcap \{U(\alpha,i) : \alpha < \theta \ \wedge\ 1 \leq i \leq n_\alpha\} \neq \emptyset.
\]
Thus, if $x$ is an element of this intersection, then $\{x\} \in \bigcap_{\alpha<\theta} U_\alpha$.
\end{proof}

Proposition~\ref{prop_hiphip_basico} follows from Propositions~\ref{prop_basico}(\ref{basico_pi_peso}), \ref{prop_FCT_pseudocaracter_2}, \ref{prop_hip_Michael_Ginsburg}(2), and~\ref{prop_hiphip_pseudocaracter}.

\begin{proposition}\label{prop_hiphip_basico} Every element of the union
\[
\{\mu \in \CN : \mu < \cf(\kappa)\}
\cup
\{\mu \in \CN : \mu > \pi w(X(\kappa))\ \wedge \ \cf(\kappa) > \cf(\mu)\}
\]
is a caliber for $\HH(X(\kappa))$.

\end{proposition}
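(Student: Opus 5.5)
We treat the two parts of the union separately, in each case working with a family of nonempty basic open sets of $\HH(X(\kappa))$. Since every nonempty open subset of $\HH(X)$ contains a nonempty basic open set, it suffices to handle families $\{\vt{\mathcal{U}_\alpha}_{\HH} : \alpha<\mu\}$ with $\mathcal{U}_\alpha\in[\tau_X^+]^{<\omega}\setminus\{\emptyset\}$.

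For the first part, let $\mu<\cf(\kappa)$. By Proposition~\ref{prop_FCT_pseudocaracter_2} we have $\psi(X(\kappa))=\cf(\kappa)$. Proposition~\ref{prop_hiphip_pseudocaracter} then says that any family of fewer than $\cf(\kappa)$ nonempty open subsets of $\HH(X(\kappa))$ has nonempty intersection. In particular, for any $\{W_\alpha:\alpha<\mu\}\subseteq\tau_{\HH(X)}^+$ we get $\bigcap_{\alpha<\mu}W_\alpha\neq\emptyset$. Taking $J=\mu$ shows that $\mu$ is a caliber for $\HH(X(\kappa))$.

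For the second part, let $\mu>\pi w(X(\kappa))$ with $\cf(\mu)<\cf(\kappa)$. We apply Proposition~\ref{prop_basico}(\ref{basico_pi_peso}) to the space $\HH(X(\kappa))$, which requires two facts.
\begin{enumerate}
\item $\mu>\pi w(\HH(X(\kappa)))$. This is immediate, because Proposition~\ref{prop_hip_Michael_Ginsburg}(2) gives $\pi w(\HH(X))=\pi w(X)$.
\item $\cf(\mu)$ is a caliber for $\HH(X(\kappa))$. Since $\cf(\mu)$ is an infinite cardinal with $\cf(\mu)<\cf(\kappa)$, this is exactly the first part applied to the cardinal $\cf(\mu)$.
\end{enumerate}
Proposition~\ref{prop_basico}(\ref{basico_pi_peso}) then yields $\mu\in\C(\HH(X(\kappa)))$.

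The only step needing care is the first part, namely checking that Proposition~\ref{prop_hiphip_pseudocaracter} covers families of arbitrary nonempty open sets and not just basic ones. This follows by shrinking each open set to a basic one inside it, since a nonempty intersection of the smaller sets gives a nonempty intersection of the original ones. One should also note that Proposition~\ref{prop_hiphip_pseudocaracter} was proved for general $X(\lambda,\kappa)$, so it applies with $\lambda=\kappa$.
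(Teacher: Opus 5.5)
Your proof is correct and follows the same route the paper indicates. The paper's proof is only a one-line citation of Propositions~\ref{prop_basico}(\ref{basico_pi_peso}), \ref{prop_FCT_pseudocaracter_2}, \ref{prop_hip_Michael_Ginsburg}(2) and~\ref{prop_hiphip_pseudocaracter}, and you fill in how these combine: the pseudocharacter bound gives the case $\mu<\cf(\kappa)$, and the $\pi$-weight criterion, using $\pi w(\HH(X))=\pi w(X)$ and the first case applied to $\cf(\mu)$, gives the case $\mu>\pi w(X(\kappa))$ with $\cf(\mu)<\cf(\kappa)$.
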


For the proof of Theorem~\ref{thm_hiphip_lambda_igual_kappa_regular}, it is important to keep in mind that, if $\kappa$ is a regular cardinal, then $\C(X(\kappa)) = \{\mu \in \CN : \cf(\mu) \neq \kappa\}$ and that the $\pi$-weight of $X(\kappa)$ is equal to $\kappa$ (see Proposition~\ref{prop_FCT_kappa_regular} and Theorem~\ref{thm_calibres_lambda_igual_kappa_regular}).

\begin{theorem}\label{thm_hiphip_lambda_igual_kappa_regular} If $\kappa$ is regular, then
\[
\C(\HH(X(\kappa))) = \{\mu \in \CN : \cf(\mu) \neq \kappa\} = \C(X(\kappa)).
\]

\end{theorem}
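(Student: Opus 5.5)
The plan is to prove the two inclusions separately. The inclusion $\C(\HH(X(\kappa))) \subseteq \C(X(\kappa))$ is immediate from Corollary~\ref{cor_hip_primer_renglon}. The equality $\C(X(\kappa)) = \{\mu \in \CN : \cf(\mu) \neq \kappa\}$ is Theorem~\ref{thm_calibres_lambda_igual_kappa_regular}. It therefore remains to show that every $\mu \in \CN$ with $\cf(\mu) \neq \kappa$ is a caliber for $\HH(X(\kappa))$. Since $\kappa$ is regular, this set splits into three pieces, namely $\{\mu : \mu < \kappa\}$, $\{\mu : \mu > \kappa > \cf(\mu)\}$, and $\{\mu : \cf(\mu) > \kappa\}$.

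The first two pieces follow from Proposition~\ref{prop_hiphip_basico}. Indeed, $\cf(\kappa) = \kappa$ and $\pi w(X(\kappa)) = \kappa$ by Proposition~\ref{prop_FCT_kappa_regular}. Hence the first set there is exactly $\{\mu : \mu<\kappa\}$, and the second is $\{\mu : \mu>\kappa \wedge \cf(\mu)<\kappa\}$.

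For the third piece, $\cf(\mu) > \kappa \geq \omega$, so $\mu \in \C(X(\kappa)) \cap \UC$. Corollary~\ref{cor_hip_primer_renglon} then gives $\mu \in \C(\F(X)) \subseteq \C(\HH(X))$. Alternatively, one can argue via density. By Proposition~\ref{prop_hip_Michael_Ginsburg}(1) and Proposition~\ref{prop_FCT_densidad}, $d(\HH(X)) \leq d(\F(X)) \leq |\F(X)| = \kappa < \cf(\mu)$, and then Proposition~\ref{prop_basico}(\ref{basico_densidad}) applies.

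Combining the three cases gives $\{\mu : \cf(\mu) \neq \kappa\} \subseteq \C(\HH(X(\kappa)))$, which yields the equality. There is no real obstacle here. The only point to check carefully is that Proposition~\ref{prop_hiphip_basico} really covers all $\mu<\kappa$, including $\mu=\omega$ when $\kappa>\omega$. This case is essential, because Proposition~\ref{prop_hip_calibres_UC_X_F} does not handle countable cofinality. The check goes through the pseudocharacter computation in Proposition~\ref{prop_hiphip_pseudocaracter}: a family of fewer than $\psi(X(\kappa)) = \kappa$ nonempty open sets of $\HH(X)$ has nonempty intersection.
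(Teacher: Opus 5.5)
Your proof is correct and uses the same ingredients as the paper: Corollary~\ref{cor_hip_primer_renglon} handles the upper bound and the calibers of uncountable cofinality, and Proposition~\ref{prop_hiphip_basico} (with $\cf(\kappa)=\pi w(X(\kappa))=\kappa$) handles the remaining cases. The only difference is organizational: the paper first uses the corollary to reduce to $\mu\in\C(X(\kappa))$ with $\cf(\mu)=\omega$, and then notes that such $\mu\neq\kappa$ falls under Proposition~\ref{prop_hiphip_basico}, whereas you split into three pieces before invoking it.
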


\begin{proof} By Corollary~\ref{cor_hip_primer_renglon}, it suffices to prove that $\C(X) \setminus \UC \subseteq \C(\F(X))$. Moreover, since $\C(X) \setminus \UC = \emptyset$ if $\kappa = \omega$, in which case the desired inclusion holds, we may assume throughout the argument that $\kappa > \omega$.

Let $\mu \in \C(X)$ be such that $\cf(\mu) = \omega$. Note that the condition $\mu \in \C(X)$ precludes $\mu = \kappa$. Furthermore, Proposition~\ref{prop_hiphip_basico} implies that $\mu \in \C(\F(X))$.
\end{proof}

Before presenting the result corresponding to Theorem~\ref{thm_hiphip_lambda_igual_kappa_singular}, which addresses the case in which $\kappa$ is a singular cardinal, we state Lemma~\ref{lema_hiphip_lambda_igual_kappa_singular}, which will be a key tool in the proof of Theorem~\ref{thm_hiphip_lambda_igual_kappa_singular}.

\begin{lemma}\label{lema_hiphip_lambda_igual_kappa_singular} If $\mu \in \CN$ satisfies $\cf(\mu) = \omega < \cf(\kappa) < \mu$, then $\mu$ is a caliber for $\F(X(\kappa))$.

\end{lemma}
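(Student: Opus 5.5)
The idea is to reduce the statement to the computation of $\C(X(\kappa))$ in Section~\ref{secc_calibres}, using singletons as witnesses. A point lying in every $V$ of a finite family $\mathcal{V}\subseteq\tau_X^+$ gives a singleton in $\vt{\mathcal{V}}_{\F}$. So it suffices to find a single point of $X$ common to all the open sets of $X$ that define $\mu$ many of the given basic open sets. This is the same mechanism used in Propositions~\ref{prop_hiphip_hip_hip} and~\ref{prop_hiphip_pseudocaracter}.

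Concretely, I would let $\{U_\alpha:\alpha<\mu\}\subseteq\tau_{\F(X)}^+$ and shrink each $U_\alpha$ to a basic open set. This is harmless, since finding $J$ for the shrunken sets finds it for the original ones. So assume $U_\alpha=\vt{V(\alpha,1),\dots,V(\alpha,n_\alpha)}_{\F}$ with $n_\alpha\in\mathbb{N}$ and $V(\alpha,i)\in\tau_X^+$. Put
\[\textstyle
B_\alpha := \bigcup_{i=1}^{n_\alpha}\big(X\setminus V(\alpha,i)\big).
\]
Each $X\setminus V(\alpha,i)$ lies in $[X]^{<\kappa}$, and $[X]^{<\kappa}$ is closed under finite unions because $\kappa$ is infinite. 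Hence $\{B_\alpha:\alpha<\mu\}\subseteq[X]^{<\kappa}$.

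Next I would check that $\mu\in\C(X(\kappa))$. If $\kappa$ is singular, Theorem~\ref{thm_calibres_lambda_igual_kappa_singular} gives this, because $\cf(\mu)=\omega<\cf(\kappa)$ forces $\cf(\mu)\neq\cf(\kappa)$. If $\kappa$ is regular, Theorem~\ref{thm_calibres_lambda_igual_kappa_regular} gives it, since $\cf(\mu)=\omega<\kappa$. With this, Remark~\ref{rmk_calibres_fundamental} yields $J\in[\mu]^\mu$ such that $\bigcup_{\alpha\in J}B_\alpha\neq X$.

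Finally, choose $x\in X\setminus\bigcup_{\alpha\in J}B_\alpha$. Then $x\in V(\alpha,i)$ for every $\alpha\in J$ and every $1\le i\le n_\alpha$. Since $X$ is $T_1$, $\{x\}\in\F(X)$, and $\{x\}$ lies in the union of the $V(\alpha,i)$ and meets each of them. Thus $\{x\}\in\bigcap_{\alpha\in J}U_\alpha$, so $\mu$ is a caliber for $\F(X(\kappa))$.

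I do not expect a genuine obstacle. The only point needing care is that the number $n_\alpha$ of open sets varies with $\alpha$. Unlike in Proposition~\ref{prop_hip_calibres_UC_X_F}, this does not matter here: we never pass to $X^m$, because a single point witnesses all the finitely many conditions at once. This also means the argument uses the hypothesis on $\mu$ only through $\cf(\mu)\neq\cf(\kappa)$, which is what lets us fall back on the caliber results for $X(\kappa)$.
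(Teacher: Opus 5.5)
Your proof is correct, and it follows a genuinely different and much more direct route than the paper's. The paper never passes through $\C(X(\kappa))$. It sorts the basic sets by $\xi(\alpha)$, the least $\xi<\cf(\kappa)$ such that all complements $A(\alpha,i)$ have size $<\kappa_\xi$. A pigeonhole argument modeled on the Claim of Lemma~\ref{lema_calibres_singular_2} then gives $\mu$ indices with a common bound $\kappa_\xi$. Everything is traced onto a subspace $Y\in[X]^{\kappa_\xi}$ homeomorphic to $X(\kappa_\xi)$, and the paper invokes the regular-case hyperspace result, Theorem~\ref{thm_hiphip_lambda_igual_kappa_regular}. That theorem in turn rests on the $\pi$-weight and pseudocharacter computations for $\HH(X)$ (Propositions~\ref{prop_hip_Michael_Ginsburg}(2), \ref{prop_hiphip_pseudocaracter} and~\ref{prop_hiphip_basico}).

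Your observation is that $\vt{V(\alpha,1),\dots,V(\alpha,n_\alpha)}_{\F}$ contains every singleton of the nonempty open set $X\setminus B_\alpha$. This replaces all of the above with one application of the caliber computation of Section~\ref{secc_calibres}. It also avoids two details the paper's version has to handle with care. First, the sets $\{\alpha<\mu:\xi(\alpha)=\xi\}$ partition $\mu$ and need not contain one of size $\mu$: for $\mu=\aleph_\omega$ and $\cf(\kappa)=\aleph_1$ the pieces can have sizes $\aleph_n$. So the pigeonhole must be run on the cumulative sets $\{\alpha<\mu:\xi(\alpha)\leq\xi\}$. Second, the $\kappa_\xi$ must be chosen regular for Theorem~\ref{thm_hiphip_lambda_igual_kappa_regular} to apply.

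Your argument also proves more than the lemma. It uses only $\mu\in\C(X)$, the $T_1$ property, and the fact that finitely many nonempty open subsets of $X$ always meet. Hence it shows $\C(X)\subseteq\C(\F(X))$ for every hyperconnected $T_1$ space $X$. Combined with Corollary~\ref{cor_hip_primer_renglon}, this gives $\C(X)=\C(\HH(X))$ for all such $X$. That makes Theorems~\ref{thm_hiphip_lambda_igual_kappa_regular}, \ref{thm_hiphip_lambda_igual_kappa_singular} and~\ref{thm_hiphip_lambda_menor_kappa} immediate consequences of Section~\ref{secc_calibres}, and it answers item~(1) of the paper's closing question affirmatively. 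The paper's subspace-reduction route buys nothing comparable here, beyond staying inside its cardinal-function framework.
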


\begin{proof} Let $\{\kappa_\xi : \xi < \cf(\kappa)\}$ be a strictly increasing sequence of cardinals such that $\kappa_0 = \cf(\kappa)$ and $\sup\{\kappa_\xi : \xi < \cf(\kappa)\} = \kappa$. Furthermore, let $\{U_\alpha : \alpha < \mu\}$ be a subset of $\tau_{\F(X)}^{+}$ consisting of basic open sets and, for each $\alpha < \mu$, let $n_\alpha \in \mathbb{N}$ and $\{U(\alpha,i) : 1\leq i \leq n_\alpha\} \subseteq \tau_X^+$ be such that $U_\alpha = \langle U(\alpha,1),\ldots,U(\alpha,n_\alpha)\rangle_{\F}$.

Moreover, for every $\alpha < \mu$ and $1 \leq i \leq n_\alpha$, let $A(\alpha,i) \in [X]^{<\kappa}$ be such that $U(\alpha,i) = X\setminus A(\alpha,i)$. Also, for each $\alpha < \mu$, let
\[
\textstyle
\xi(\alpha) := \min\big\{\xi < \cf(\kappa) : \forall\, 1 \leq i \leq n_\alpha\ (|A(\alpha,i)| < \kappa_\xi)\big\}
\]
and, for every $\xi < \cf(\kappa)$, let $J_\xi := \{\alpha < \mu : \xi(\alpha) = \xi\}$.

By an argument similar to that used in the Claim of Lemma~\ref{lema_calibres_singular_2}, there exists $\xi < \cf(\kappa)$ such that $|J_\xi|=\mu$. Now let $Y \in [X]^{\kappa_\xi}$, endowed with the topology $\tau(\kappa_\xi)$. Additionally, for every $\alpha \in J_\xi$ and $1 \leq i \leq n_\alpha$, let $B(\alpha,i):=A(\alpha,i)\cap Y$. The inclusion $\{B(\alpha,i):\alpha\in J_\xi \ \wedge\ 1\leq i\leq n_\alpha\}\subseteq [Y]^{<\kappa_\xi}$ implies that, for each $\alpha\in J_\xi$, the set
\[
\textstyle
V_\alpha:=\langle Y\setminus B(\alpha,1),\ldots,Y\setminus B(\alpha,n_\alpha)\rangle_{\F(Y)}
\]
is a basic open set in the hyperspace $\F(Y)$.

Thus, since $\mu$ is a caliber for $\F(X(\kappa_\xi))$, as $\cf(\mu) = \omega < \cf(\kappa) \leq \kappa_\xi$ (see Theorem~\ref{thm_hiphip_lambda_igual_kappa_regular}), $X(\kappa_\xi)$ is homeomorphic to $Y$, and $\{V_\alpha : \alpha \in J_\xi\} \subseteq \tau_{\F(Y)}^{+}$, there exist a set $J\in[J_\xi]^\mu$ and a point $B\in\bigcap_{\alpha\in J}V_\alpha$. Consequently, $J\in[\mu]^\mu$ and $B\in\bigcap_{\alpha\in J}U_\alpha$. Hence, $\F(X)$ has caliber $\mu$.
\end{proof}

For the proof of Theorem~\ref{thm_hiphip_lambda_igual_kappa_singular}, it should be kept in mind that, if $\kappa$ is a singular cardinal, then $\C(X(\kappa)) = \{\mu \in \CN : \cf(\mu) \neq \cf(\kappa)\}$ (see Theorem~\ref{thm_calibres_lambda_igual_kappa_singular}).

\begin{theorem}\label{thm_hiphip_lambda_igual_kappa_singular}
If $\kappa$ is singular, then
\[
\C(\HH(X(\kappa))) = \{\mu \in \CN : \cf(\mu) \neq \cf(\kappa)\} = \C(X(\kappa)).
\]
\end{theorem}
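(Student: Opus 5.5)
By Corollary~\ref{cor_hip_primer_renglon} and Theorem~\ref{thm_calibres_lambda_igual_kappa_singular}, we have
\[
\C(X)\cap\UC\subseteq\C(\F(X))\subseteq\C(\HH(X))\subseteq\C(X)=\{\mu\in\CN:\cf(\mu)\neq\cf(\kappa)\}.
\]
So it suffices to show that every $\mu\in\C(X)$ with $\cf(\mu)=\omega$ is a caliber for $\F(X)$. Since $\kappa$ is singular it is uncountable, and since $\mu\in\C(X)$ we have $\cf(\kappa)\neq\omega$. Hence $\cf(\kappa)>\omega=\cf(\mu)$. I would fix such a $\mu$ and split into cases according to the size of $\mu$ relative to $\cf(\kappa)$.

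\emph{Case $\mu<\cf(\kappa)$.} Here Proposition~\ref{prop_hiphip_basico} applies directly, or one can argue via Proposition~\ref{prop_hiphip_pseudocaracter}: $\mu<\cf(\kappa)=\psi(X)$, so any $\mu$ nonempty open sets of $\HH(X)$, and in particular of $\F(X)$, have nonempty intersection. Thus $\mu\in\C(\F(X))$.

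\emph{Case $\mu=\cf(\kappa)$.} This cannot occur, since $\cf(\mu)=\omega<\cf(\kappa)$ while $\cf(\kappa)$ is regular.

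\emph{Case $\mu>\cf(\kappa)$.} Then $\cf(\mu)=\omega<\cf(\kappa)<\mu$, which is exactly the hypothesis of Lemma~\ref{lema_hiphip_lambda_igual_kappa_singular}. That lemma gives $\mu\in\C(\F(X))$. Combining the cases with Corollary~\ref{cor_hip_primer_renglon}, we get $\C(X)\subseteq\C(\F(X))\subseteq\C(\HH(X))\subseteq\C(X)$, so all of these sets coincide.

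I do not expect a serious obstacle, since the heavy lifting is already done in Lemma~\ref{lema_hiphip_lambda_igual_kappa_singular}. The only point needing care is the case analysis: the cases $\mu<\cf(\kappa)$ and $\mu>\cf(\kappa)$ must together cover every countable-cofinality caliber, and that requires excluding $\mu=\cf(\kappa)$ via the regularity of $\cf(\kappa)$.
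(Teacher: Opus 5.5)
Your proposal is correct and follows the paper's proof essentially verbatim. Both reduce via Corollary~\ref{cor_hip_primer_renglon} to countable-cofinality calibers, handle $\mu<\cf(\kappa)$ by Proposition~\ref{prop_hiphip_basico}, exclude $\mu=\cf(\kappa)$, and apply Lemma~\ref{lema_hiphip_lambda_igual_kappa_singular} when $\mu>\cf(\kappa)$. The only cosmetic difference is that the paper disposes of $\cf(\kappa)=\omega$ by noting $\C(X)\setminus\UC=\emptyset$, whereas you derive $\cf(\kappa)\neq\omega$ directly from $\mu\in\C(X)$; this is the same observation.
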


\begin{proof}
By Corollary~\ref{cor_hip_primer_renglon}, it suffices to verify that $\C(X) \setminus \UC \subseteq \C(\F(X))$. Moreover, since the assumption $\cf(\kappa) = \omega$ guarantees that $\C(X) \setminus \UC = \emptyset$, which implies that the desired inclusion holds, we may assume throughout the argument that $\cf(\kappa) > \omega$.

Let $\mu \in \C(X)$ with $\cf(\mu) = \omega$. First, if $\mu < \cf(\kappa)$, Proposition~\ref{prop_hiphip_basico} implies that $\mu$ is a caliber for $\F(X)$. Otherwise, if $\mu \geq \cf(\kappa)$, it cannot be the case that $\mu = \cf(\kappa)$, since $\mu$ has countable cofinality. Therefore, $\cf(\mu) = \omega < \cf(\kappa) < \mu$. Finally, under these inequalities, Lemma~\ref{lema_hiphip_lambda_igual_kappa_singular} allows us to conclude that $\mu$ is a caliber for $\F(X)$.
\end{proof}

For the proof of Theorem~\ref{thm_hiphip_lambda_menor_kappa}, it is useful to recall a couple of classical results concerning Vietoris hyperspaces. First, if $X$ is $T_1$ and $Y$ is a dense subspace of $X$, then $\F(Y)$ is a dense subspace of $\F(X)$. On the other hand, if $Z$ is $T_1$ and a continuous image of $X$, then $\F(Z)$ is a continuous image of $\F(X)$. In addition, it is important to keep in mind that, if $\lambda < \kappa$, then $\C(X(\lambda,\kappa)) = \CN$, by Theorem~\ref{thm_calibres_lambda_menor_kappa}.

\begin{theorem}\label{thm_hiphip_lambda_menor_kappa}
If $\lambda < \kappa$, then
\[
\C(\HH(X(\lambda,\kappa))) = \CN = \C(X(\lambda,\kappa)).
\]
\end{theorem}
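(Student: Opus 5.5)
By Corollary~\ref{cor_hip_primer_renglon} and Theorem~\ref{thm_calibres_lambda_menor_kappa}, it suffices to show that every $\mu \in \CN$ with $\cf(\mu)=\omega$ is a caliber for $\F(X)$, where $X = X(\lambda,\kappa)$. Indeed, $\C(X)\cap\UC = \UC \subseteq \C(\F(X))$ already holds, and $\C(\F(X)) \subseteq \C(\HH(X)) \subseteq \C(X) = \CN$. We therefore fix $\mu$ of countable cofinality.

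The idea is to use the two classical facts recalled just before the statement, to transfer the problem to an auxiliary space $X(\nu)$ with $\nu$ regular and uncountable. In that space, Theorem~\ref{thm_hiphip_lambda_igual_kappa_regular} gives $\mu \in \C(\F(X(\nu)))$, since $\cf(\mu)=\omega\neq\nu$.

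Concretely, take $\nu := \lambda^+$. Then $\nu$ is regular and uncountable, and $\nu \leq \kappa$ because $\lambda < \kappa$. Choose $Y \in [X]^{\nu}$. The subspace topology on $Y$ is $\{Y\setminus A : A\in[Y]^{<\lambda}\}\cup\{\emptyset\}$, which is $\tau(\lambda,\nu)$ rather than $\tau(\nu)$, so I will not work with the subspace directly. Instead I will mimic the proofs of Lemma~\ref{lema_hiphip_lambda_igual_kappa_singular} and Theorem~\ref{thm_calibres_lambda_menor_kappa} as follows.

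Let $\{U_\alpha:\alpha<\mu\}$ be basic open sets of $\F(X)$, with
\[
U_\alpha=\vt{X\setminus A(\alpha,1),\dots,X\setminus A(\alpha,n_\alpha)}_{\F}, \qquad A(\alpha,i)\in[X]^{<\lambda}.
\]
Set $B(\alpha,i):=A(\alpha,i)\cap Y\in[Y]^{<\lambda}\subseteq[Y]^{<\nu}$. Then
\[
V_\alpha:=\vt{Y\setminus B(\alpha,1),\dots,Y\setminus B(\alpha,n_\alpha)}_{\F(Y)}
\]
is a nonempty basic open set of $\F(Y)$, where $Y$ carries the topology $\tau(\nu)$, so that $Y$ is homeomorphic to $X(\nu)$. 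Since $\mu$ is a caliber for $\F(X(\nu))$, there exist $J\in[\mu]^\mu$ and a finite $B \in \bigcap_{\alpha\in J}V_\alpha$. For each $\alpha \in J$ and each $i$, the set $B$ is contained in $\bigcup_i (Y\setminus B(\alpha,i))\subseteq \bigcup_i (X\setminus A(\alpha,i))$ and meets $Y\setminus B(\alpha,i)\subseteq X\setminus A(\alpha,i)$. Since $B$ is finite, it is closed in the $T_1$ space $X$, so $B\in\bigcap_{\alpha\in J}U_\alpha$. Hence $\mu\in\C(\F(X))$, and then $\mu \in \C(\HH(X))$ by Corollary~\ref{cor_hip_primer_renglon}.

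The main point requiring care is the last verification: the topology used on $Y$ is the coarser $\tau(\nu)$, not the subspace topology, so the membership in $V_\alpha$ has to be translated back into membership in $U_\alpha$ by hand. This works because the translation uses only the set inclusions $Y\setminus B(\alpha,i)\subseteq X\setminus A(\alpha,i)$ and the fact that finite sets are closed in $X$. The remaining steps are direct applications of earlier results.
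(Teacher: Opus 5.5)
Your proof is correct and is essentially the paper's argument. The paper also passes to $Y\in[X]^{\lambda^+}$ and uses that $\F(X(\lambda^+))$ has caliber $\mu$ by the regular case. It packages the transfer through two facts: $\F(Y)$ is dense in $\F(X)$, and $\F(X(\lambda,\lambda^+))$ is a continuous image of $\F(X(\lambda^+))$ under the identity. You instead verify the same transfer by hand on basic open sets.

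One small slip in your commentary: $\tau(\nu)$ is finer, not coarser, than the subspace topology $\tau(\lambda,\nu)$ on $Y$, since $[Y]^{<\lambda}\subseteq[Y]^{<\nu}$. That is why the identity $X(\nu)\to Y$ is continuous; your verification does not depend on this remark.
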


\begin{proof}
By Corollary~\ref{cor_hip_primer_renglon}, it suffices to verify that $\C(X) \setminus \UC \subseteq \C(\F(X))$. Observe that if $Y \in [X]^{\lambda^+}$, then $Y$ is a dense subspace of $X$ that inherits the topology $\tau(\lambda,\lambda^+)$. On the other hand, since the identity map $X(\lambda^+) \to X(\lambda,\lambda^+)$ is continuous and surjective, it follows that $\F(X(\lambda,\lambda^+))$ is a continuous image of $\F(X(\lambda^+))$. Thus, items~(\ref{basico_denso}) and~(\ref{basico_funcion}) of Proposition~\ref{prop_basico} imply that
\[
\C(\F(X(\lambda^+))) \subseteq \C(\F(X(\lambda,\lambda^+))) = \C(\F(Y)) \subseteq \C(\F(X)).
\]
Finally, if $\mu \in \CN$ satisfies $\cf(\mu) = \omega$, then, since $\F(X(\lambda^+))$ has caliber $\mu$, as $\cf(\mu) = \omega \leq \lambda < \lambda^+$ (see Theorem~\ref{thm_hiphip_lambda_igual_kappa_regular}), it follows that $\mu \in \C(\F(X))$.
\end{proof}

The following theorem summarizes the main results obtained in this section, corresponding to Theorems~\ref{thm_hiphip_lambda_igual_kappa_regular}, \ref{thm_hiphip_lambda_igual_kappa_singular}, and~\ref{thm_hiphip_lambda_menor_kappa}:

\begin{theorem}\label{thm_hiphip_calibres_resumen} If $\lambda<\kappa$, then
\begin{align*}
\C(\HH(X(\kappa))) &= \{\mu \in \CN : \cf(\mu) \neq \cf(\kappa)\} = \C(X(\kappa)) \hspace{0.5em} \text{and} \\
\C(\HH(X(\lambda,\kappa))) &= \CN = \C(X(\lambda,\kappa)).
\end{align*}

\end{theorem}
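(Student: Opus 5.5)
This is a summary theorem, so I will prove it by combining Theorems~\ref{thm_hiphip_lambda_igual_kappa_regular}, \ref{thm_hiphip_lambda_igual_kappa_singular}, and~\ref{thm_hiphip_lambda_menor_kappa}, splitting into the two displayed equalities.

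For the first line, I will split according to whether $\kappa$ is regular or singular.

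\emph{Case $\kappa$ regular.} Here $\cf(\kappa)=\kappa$. Theorem~\ref{thm_hiphip_lambda_igual_kappa_regular} gives
\[
\C(\HH(X(\kappa)))=\{\mu\in\CN:\cf(\mu)\neq\kappa\}=\C(X(\kappa)),
\]
and the middle set coincides with $\{\mu\in\CN:\cf(\mu)\neq\cf(\kappa)\}$.

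\emph{Case $\kappa$ singular.} Theorem~\ref{thm_hiphip_lambda_igual_kappa_singular} gives the first line verbatim.

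For the second line, the hypothesis $\lambda<\kappa$ is exactly the hypothesis of Theorem~\ref{thm_hiphip_lambda_menor_kappa}, which gives $\C(\HH(X(\lambda,\kappa)))=\CN=\C(X(\lambda,\kappa))$.

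One remark on the hypotheses. The first line does not actually use $\lambda<\kappa$; that hypothesis appears only so that both infinite cardinals, and hence both kinds of spaces, are present. The argument is therefore pure bookkeeping and has no real obstacle. The only thing to check is that the regular case fits the uniform formula via $\cf(\kappa)=\kappa$. The substantive work is already contained in the earlier theorems, in particular Lemma~\ref{lema_hiphip_lambda_igual_kappa_singular} together with Corollary~\ref{cor_hip_primer_renglon}, which reduce everything to showing that calibers of countable cofinality lift to $\F(X)$.
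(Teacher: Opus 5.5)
Your proposal is correct and matches the paper's approach: the theorem is just the conjunction of Theorems~\ref{thm_hiphip_lambda_igual_kappa_regular}, \ref{thm_hiphip_lambda_igual_kappa_singular}, and~\ref{thm_hiphip_lambda_menor_kappa}, with the regular case fitting the uniform formula because $\cf(\kappa)=\kappa$. Your remark that the hypothesis $\lambda<\kappa$ plays no role in the first equality is also accurate.
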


Canonical hyperconnected spaces form a particular class of $T_1$ spaces that are never $T_2$ and have the property that the calibers of the hyperspace $\HH$ coincide with those of the underlying space. Moreover, Theorems~\ref{thm_hip_condiciones_cadena_H} and~\ref{thm_hiphip_infinito_numerable} show that this coincidence of calibers also occurs in other particular cases related to separation and cardinality properties. However, it is still unknown whether this condition holds for all $T_1$ spaces, even when they are hyperconnected. This situation raises the following questions, whose answers are currently unknown:

\begin{question} Let $X$ be a non-$T_2$, uncountable $T_1$ topological space. Also, let $\HH(X)$ be a hyperspace intermediate between $\F(X)$ and $\CL(X)$.

\begin{enumerate}[label=(\arabic*)]

\item Is it always the case that $\C(X)=\C(\HH(X))$ whenever $X$ is hyperconnected?

\item In general, if $X$ satisfies the properties specified in the hypotheses, does
\[
\C(X)=\C(\HH(X))?
\]

\item Does there exist a space $X$ satisfying the properties specified in the hypotheses such that
\[
\C(\F(X))\neq\C(\K(X)), \hspace{0.5em} \C(\K(X))\neq\C(\CL(X)) \hspace{0.5em} \text{or} \hspace{0.5em} \C(\F(X))\neq\C(\CL(X))?
\]

\end{enumerate}

\end{question}

\end{document}